\DeclareSymbolFont{cyrletters}{OT2}{wncyr}{m}{n}
\DeclareMathSymbol{\Sha}{\mathalpha}{cyrletters}{"58}
\newtheorem*{thma}{Theorem A}
\newtheorem*{thmb}{Theorem B}
\newtheorem*{thmc}{Theorem C}
\newcommand{\LL}{\Lambda}
\newcommand{\TT}{\mathbb{T}}
\newcommand{\QQ}{\mathbb{Q}}
\newcommand{\FF}{\mathcal{F}}
\newcommand{\lra}{\longrightarrow}
\newcommand{\ZZ}{\mathbb{Z}}
\newcommand{\PP}{\mathcal{P}}
\newcommand{\KKK}{\mathcal{K}}
\newcommand{\Gal}{\textup{Gal}}
\newcommand{\KS}{\textbf{\textup{KS}}}
\newcommand{\ES}{\textbf{\textup{ES}}}
\newcommand{\NN}{\mathcal{N}}
\newcommand{\ra}{\rightarrow}
\newcommand{\be}{\begin{equation}}
\newcommand{\ee}{\end{equation}}
\newcommand{\kk}{\frak{E}}
\newcommand{\al}{\mathcal{L}}
\newcommand{\oo}{\mathcal{O}}
\newcommand{\RR}{\mathcal{R}}
\newcommand{\eee}{\epsilon}
\newcommand{\FFc}{\mathcal{F}_{\textup{\lowercase{can}}}}
\newcommand{\hne}{\mathbf{H.nE}}
\newcommand{\ooo}{\frak{O}}
\numberwithin{equation}{section}
\newtheorem{thm}{Theorem}[section]
\newtheorem{lemma}[thm]{Lemma}
\newtheorem{conjecture}[thm]{Conjecture}
\newenvironment{define}{\par\medskip\noindent\refstepcounter{thm}
\bgroup{\hspace*{-0.15 cm}\bf{Definition}
\thethm.}\bgroup}{\egroup \egroup\par\medskip}\newtheorem{prop}[thm]{Proposition}
\newtheorem{cor}[thm]{Corollary}
\newenvironment{rem}{\par\medskip\noindent\refstepcounter{thm}
\bgroup{\hspace*{-0.15 cm}\bf{Remark} \thethm.}\bgroup}{\egroup
\egroup\par\medskip} \parskip 2pt
\newenvironment{example}{\par\medskip\noindent\refstepcounter{thm}
\bgroup{\hspace*{-0.15 cm}\bf{Example}
\thethm.}\bgroup}{\egroup \egroup\par\medskip}
\newtheorem{conj}{Conjecture}
\newcounter{Athm}[section]\setcounter{Athm}{1}
\renewcommand{\theAthm} {\arabic{Athm}}
\begin{document}
\title{{M}\lowercase{ain conjectures for} CM \lowercase{fields and a} Y\lowercase{ager-type theorem for}  R\lowercase{ubin}-{S}\lowercase{tark} \lowercase{elements}}

\author{K\^az\i m B\"uy\"ukboduk}

\email{kazim@math.stanford.edu}
\keywords{Rubin-Stark elements, CM abelian varieties, CM Main conjectures}
\subjclass[2000]{11G05; 11G10; 11G40; 11R23; 14G10}

\begin{abstract}
In this article, we study the $p$-ordinary Iwasawa theory of the (conjectural) Rubin-Stark elements defined over abelian extensions of a CM field $F$ and develop a rank-$g$ Euler/Kolyvagin system  machinery (where $2g=[F:\QQ]$), refining and generalizing Perrin-Riou's theory and the author's prior work.  This has several important arithmetic consequences: Using the recent results of Hida and Hsieh on the CM main conjectures, we prove a natural extension of a theorem of Yager for the CM field $F$, where we relate the Rubin-Stark elements to the several-variable Katz $p$-adic $L$-function. Furthermore, beyond the cases covered by Hida and Hsieh, we are able to reduce the $p$-ordinary CM main conjectures to a local statement about the Rubin-Stark elements. We discuss applications of our results in the arithmetic of CM abelian varieties.
\end{abstract}

\maketitle
\tableofcontents
\section{Introduction}
\label{sec:intro}

The main objective of this paper is to study Iwasawa's main conjecture for CM fields (see \cite[Page 90]{ht94} for its precise statement). When the CM field in question is an imaginary quadratic number field, the main conjecture was proved by Rubin~\cite{rubinmainconj} using the elliptic unit Euler system. For more general CM fields, it has been tackled by Hida and Tilouine~\cite{ht94} and Hida~\cite{hidaquadratic} along the anticyclotmic tower, by Mainardi in~\cite{mainardi} along the cyclotomic tower and by Hsieh in~\cite{hsiehCMmainconj} in more general context. All these works relied on the Eisenstein ideal method. In this paper, we approach this problem via the conjectural Rubin-Stark elements, by means of extending and refining the rank-$r$ Euler/Kolyvagin system machinery developed by the author in \cite{kbbesrankr}. Before stating our results in detail, we provide an overview of what we carry through in this paper:
\begin{enumerate}
\item[1.] Applying the general machinery we set up in this article with the Rubin-Stark elements (along with the aid of Nekov\'a\v{r}'s \cite{nekovar06} descent formalism incorporated in his theory of Selmer complexes), we prove Theorem A below where we show that the CM main conjecture is equivalent to a purely local statement about the Rubin-Stark elements. This local statement is labeled by Conjecture 2 below and 
asserts that the collection of Rubin-Stark elements along the maximal $\ZZ_p$-power extension of a CM field recovers the Katz $p$-adic $L$-function.  \\
\item[2.] In particular, combining with a recent work of Hsieh~\cite{hsiehCMmainconj} in which he proves the CM main conjectures  under certain hypotheses, we are able to prove Conjecture 2 in many interesting cases.  Our second main result (that may be found in its precise form in Theorem B below) is a natural and a far reaching generalization of an important theorem of Yager~\cite[Theorem 1]{yager}, which in its original form relates the tower of elliptic units along the $\ZZ_p^2$-extension of an imaginary quadratic field to the two-variable $p$-adic $L$-function of Katz.\\
\item[3.] Assuming the truth of Conjecture~2 (beyond the cases covered by Theorem~B), we provide an application of Theorem~A in the arithmetic study of CM abelian varieties and to that end, we prove our third result, which is labeled as Theorem~C below. The only results so far that parallel Theorem~C are limited to the case when the abelian variety $\mathcal{A}$ in question is of $\textup{GL}_2$-type and is defined over $\QQ$; or else to the case $\dim \mathcal{A}=1$, i.e., $\mathcal{A}$ is an elliptic curve. That is why, albeit being conditional, we believe that Theorem~C should still be of interest.
\end{enumerate}

We remark that although the existence Rubin-Stark elements is conjectural, one may prove (as in \cite{kbb,kbbdeform}) that the Kolyvagin systems which we obtain using the Rubin-Stark elements and which play a crucial role in the proofs of our main theorems do exist unconditionally. See also Remark~\ref{rem:existenceofKS} below.

It is important to note that not only our approach significantly differs from that of \cite{ht94,hidaquadratic,mainardi,hsiehCMmainconj}, it also sheds light on the CM main conjectures well-beyond the cases covered in all previous work on this subject. To give an example, suppose $\mathcal{E}$ is an elliptic curve defined over a totally real field $K$ which has CM by the imaginary quadratic field $M$ and set $F=MK$. All prior results (in this level of generality, namely when $K$ is a general totally real field) concerning the main conjectures for $\mathcal{E}$ required the assumption that the sign $W(\mathcal{E}/K)$ of the functional equation of the Hasse-Weil $L$-function of $\mathcal{E}_{/K}$ is $+1$. Our Theorem A allows us to go beyond that (of course, it unfortunately proves less than the main conjectures, too).

The method that we develop in this article seems flexible enough to study the \emph{anticyclotomic main conjectures} in the following form, which did not seem possible with alternative techniques in the literature. Let $M_\infty^-$ denote the anticyclotomic $\ZZ_p$-extension of $M$ and suppose $\mathcal{E}/K$ is an elliptic curve as above. When $W(\mathcal{E}/K)=-1$, the Iwasawa theory of $\mathcal{E}$ along $F_\infty^-:=FM_\infty^-$ is much different from the cyclotomic Iwasawa theory. The relevant Selmer group in this case is not expected to be torsion and the methods of \cite{ht94,hidaquadratic,mainardi,hsiehCMmainconj} does not seem adequate to address this important case. We will use the techniques we develop in this paper in our forthcoming work~\cite{kbbanticycloK} to study the Iwasawa theory of $\mathcal{E}_{/K}$ along $F_\infty^{-}$, when $K$ is a general totally real field\footnote{When $K=\QQ$, this has been studied by Agboola and Howard in \cite{agboolahoward} using the elliptic unit Euler system; see also \cite{arnoldCMform} for a generalization of their results to higher weight forms, as well as \cite{arnoldhida} for the treatment of an analogous problem in the non-CM case, this time making use of Kato-Ochiai Euler system. } and when the sign of the functional equation is \emph{not} necessarily $+1$.

As in \cite{ht94,hidaquadratic,mainardi,hsiehCMmainconj}, the $p$-ordinary condition (labeled as (\textbf{H.pOrd}) below) plays a vital role also in this article. To best of our knowledge, nothing substantial has been proved in the absence of this hypothesis\footnote{Only when the totally real field $K$ is $\QQ$, Rubin~\cite{rubinmainconj} has proved a form of the two-variable main conjecture in the supersingular setting.}. In our forthcoming work~\cite{kbbstarksupersingular}, we adapt the approach of this article with the conjectural Rubin-Stark elements (and appropriately modify the rank-$r$ Euler/Kolyvagin system machinery that we develop here) in order to prove (a form of) the main conjectures of Iwasawa theory for CM fields at supersingular primes.  This falls away from the scope of all the previous work on the CM main conjectures which are alluded to above.

Before giving a further account of our results, we set our notation that will be in effect throughout this paper. Let $F$ be a CM field and $K$ its maximal totally real subfield that has degree $g$ over $\QQ$. Fix a complex conjugation $c \in \textup{Gal}(\overline{\QQ}/K)$ lifting the generator of $\textup{Gal}(F/K)$. Fix forever an odd prime $p$ unramified in $F/\QQ$ and an embedding $\iota_p: \overline{\QQ} \hookrightarrow \overline{\QQ}_p$.
\begin{define} \emph{A CM-type} $\Sigma$ is a collection of embeddings $\sigma: F \hookrightarrow \overline{\QQ}$ such that $\Sigma \cup {\Sigma}^c$ is the full set of embeddings of $F$ into $\overline{\QQ}$ and $\Sigma\cap\Sigma^c=\emptyset$. Here the complex conjugation $\sigma^c$ of an embedding $\sigma$ is defined so that $\sigma^c(a)=\sigma(a^c)$ for $a \in F$.
\end{define}
\noindent Suppose that the CM-type $\Sigma$ satisfies the following hypothesis:
\\\textbf{(H.pOrd)} The embeddings $\Sigma_p:=\{\iota_p\circ \sigma\}_{\sigma \in \Sigma}$ induce exactly half of the places of $F$ over $p$.
\\\\
Such a CM-type is called $p$-ordinary. We identify $\Sigma_p$ with the associated subset of primes $\{\wp_1,\cdots,\wp_s\}$ of $F$ above $p$. Setting $\Sigma_p^c=\{\wp_1^c,\cdots\wp_s^c\}$, we see that the disjoint union $\Sigma_p \sqcup \Sigma_p^c$ is the set of all primes of $F$ above $p$. As explained in \cite{hidaquadratic}, there then exists an abelian variety with CM by $F$ having good ordinary reduction at $p$, and its CM-type is $\Sigma$.

Let $F_\infty$ be the maximal $\ZZ_p$-power extension of $F$ and let $\Gamma=\textup{Gal}(F_\infty/F)$. Then $\Gamma\cong\ZZ_p^{g+1+\delta}$, where $\delta$ is the Leopoldt defect. Fix a finite extension $\frak{F}$ of $\QQ_p$ and denote its ring of integers by $\frak{O}$. Let $\varpi \in \ooo$ be a fixed uniformizing element. We let $\mathcal{W}$ be the valuation ring of $\widehat{\overline{\QQ}}_p$, the completion of $\overline{\QQ}_p$.
Set $\LL=\ooo[[\Gamma]]$ to be the Iwasawa algebra, which is isomorphic to the formal power series ring in $g+1+\delta$ variables with coefficients in $\ooo$ and define $\LL_\mathcal{W}:=\mathcal{W}[[\Gamma]]$.
\begin{define}
For a torsion $\LL$-module $\frak{X}$, define its characteristic ideal
$$\textup{char}(\frak{X})=\prod_{\frak{P}}\frak{P}^{\textup{length}(\frak{X}_\frak{P})},$$
where the product is over height-one prime of $\LL$.
\end{define}
Since $\LL$ is regular, note that the ideal $\textup{char}(\frak{X})$ is principal. Let $\chi:G_F\ra \ooo^\times$ be a Dirichlet character of order prime to $p$ and let $L_\chi$ be the finite extension of $F$ that $\chi$ cuts out. We define $M_\infty$ to be the maximal abelian pro-$p$ extension of $L_\infty=L_\chi F_\infty$ which is unramified outside $\Sigma_p$. Set $\frak{X}_\infty=\textup{Gal}(M_\infty/L_\infty)^\chi$, which is a finitely generated torsion $\LL$-module (see \cite[\S1.2]{ht94}). Then the main conjecture in this setting asserts that:
\begin{conj}[Main conjecture]
\label{conj:mainCMconjintro}
The characteristic ideal of $\frak{X}_\infty \otimes_{\ooo}\mathcal{W}$ is generated by the Katz $p$-adic $L$-function $\mathcal{L}_{\chi}^{\Sigma}\in \mathcal{W}[[\Gamma]]$.
 \end{conj}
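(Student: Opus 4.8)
The plan is to prove Conjecture~\ref{conj:mainCMconjintro} by establishing the two divisibilities of characteristic ideals in $\LL_\mathcal{W}=\mathcal{W}[[\Gamma]]$ separately. By global class field theory and Poitou--Tate duality, $\frak{X}_\infty$ is matched with (the Pontryagin dual of) a Selmer group for the Galois module whose Iwasawa cohomology houses the Rubin--Stark elements, so that Conjecture~\ref{conj:mainCMconjintro} reads as an identity ``size of a dual Selmer module $=$ Katz $p$-adic $L$-function''. With the machinery of this paper only one of the two divisibilities is cheap; the reverse one needs a genuinely independent analytic input, and that is where the difficulty concentrates.

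\textbf{One divisibility from the Rubin--Stark Euler system.} First I would organize the (conjectural) Rubin--Stark elements $\varepsilon_L$, as $L$ runs over the finite subextensions of $F_\infty/F$ --- twisted by $\chi$ and propagated along the layers of $L_\infty/L_\chi$ --- into a rank-$g$ Euler system (recall $2g=[F:\QQ]$): the norm-compatibility under the $\Sigma_p$-modified Euler factors is precisely the defining relation of the Rubin--Stark elements, and the rank $g$ is the one predicted by the Rubin--Stark conjecture for the relevant family of Hecke characters of $F$, consistent with the $p$-ordinary hypothesis \textbf{(H.pOrd)}. As the text notes, the Rubin--Stark elements themselves are conjectural but the Kolyvagin systems they would produce exist unconditionally (cf. \cite{kbb,kbbdeform}), so one proceeds with an honest rank-$g$ Kolyvagin system $\boldsymbol{\kappa}$ over $\LL$. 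Feeding $\boldsymbol{\kappa}$ into the rank-$g$ Kolyvagin-system bound --- the technical heart of the paper, refining \cite{kbbesrankr} and Perrin-Riou's theory --- and using Nekov\'a\v{r}'s Selmer-complex descent formalism \cite{nekovar06} both to identify the controlled Iwasawa module with $\frak{X}_\infty$ and to pass the bound down to $\LL_\mathcal{W}$ without error terms, one obtains
\[
\textup{char}\big(\frak{X}_\infty\otimes_{\ooo}\mathcal{W}\big)\ \Big|\ \big(\mathrm{Col}(\varepsilon_\infty)\big)\quad\text{in }\LL_\mathcal{W},
\]
where $\mathrm{Col}$ is the Perrin-Riou-type big logarithm along $\Sigma_p$ and $\varepsilon_\infty$ the Iwasawa-theoretic Rubin--Stark class over $\Gamma$.

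\textbf{Identification with $\mathcal{L}_\chi^\Sigma$ and the reverse divisibility.} The next step is the Yager-type identification $\mathrm{Col}(\varepsilon_\infty)=u\cdot\mathcal{L}_\chi^\Sigma$ for some $u\in\LL_\mathcal{W}^\times$; this is exactly Conjecture~2, which Theorem~B establishes (via Hsieh) in many cases, and a direct proof would compare the two elements through their interpolation at the algebraic Hecke characters of $F$ --- Katz's interpolation formula on the $L$-function side against an explicit reciprocity law computing the $\mathrm{Col}$-image of a Rubin--Stark class on the Euler-system side --- pushing Yager's \cite{yager} original computation from the imaginary quadratic case ($g=1$, a Kronecker-limit-formula evaluation of the leading term) to a general CM field. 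Granting this, the displayed divisibility becomes $\textup{char}(\frak{X}_\infty\otimes_{\ooo}\mathcal{W})\mid(\mathcal{L}_\chi^\Sigma)$. For the reverse divisibility $(\mathcal{L}_\chi^\Sigma)\mid\textup{char}(\frak{X}_\infty\otimes_{\ooo}\mathcal{W})$ one needs a lower bound on $\frak{X}_\infty$, which Euler systems cannot supply; here I would either import the Eisenstein-ideal divisibility from Hsieh \cite{hsiehCMmainconj} (which yields exactly this, under its hypotheses), or bootstrap from the $\Sigma$-versus-$\Sigma^c$ symmetry --- run the argument above for the conjugate data and combine the resulting $(\Sigma^c)$-divisibility with the functional equation of the Katz $p$-adic $L$-function and the Iwasawa-adjoint/reflection relation between the $\Sigma$- and $\Sigma^c$-components of $\frak{X}_\infty$ to flip it into the wanted $(\Sigma)$-divisibility. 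Most cleanly: prove the purely local Conjecture~2 directly and invoke the equivalence of Theorem~A, which is precisely this paper's mechanism for packaging the Euler-system divisibility together with the functional-equation symmetry into the full equality of Conjecture~\ref{conj:mainCMconjintro}.

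\textbf{Main obstacle.} The crux is the reverse divisibility: the Euler/Kolyvagin-system input is one-sided by nature, so the matching lower bound on $\frak{X}_\infty$ must come from elsewhere --- Eisenstein congruences in the cases of \cite{ht94,hsiehCMmainconj}, or, in the generality we aim for, from a direct verification of Conjecture~2 in the spirit of Yager feeding into Theorem~A; moreover the functional-equation bootstrap is only tight once one knows that $\mathcal{L}_\chi^\Sigma$ has trivial $\mu$-invariant and that the rank-$g$ Kolyvagin bound carries no spurious factor. A secondary but essential technical point, internal to the first divisibility, is the ``sharpness'' of the rank-$g$ regulator $\mathrm{Col}$ --- its surjectivity onto a principal ideal with the expected generator --- which is the rank-$g$ analogue of the nonvanishing hypotheses already present in \cite{kbbesrankr} and Perrin-Riou's framework, and which is exactly what prevents an error term from entering the displayed divisibility.
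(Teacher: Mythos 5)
First, a framing issue: the statement you are asked to prove is stated in the paper as a \emph{conjecture}, and the paper does not prove it unconditionally. What the paper does is (a) prove Theorem~A, which is the \emph{equality} $\textup{char}(\frak{X}_\infty)=\textup{char}\bigl(\wedge^g\frak{U}_\infty^\chi/\LL\cdot\textup{loc}_p^{+,\otimes g}(\varepsilon_{F_\infty}^\chi)\bigr)$; (b) observe that this equality makes Conjecture~1 logically equivalent to the purely local Conjecture~2; and (c) in Theorem~B, verify Conjecture~2 under Hsieh's hypotheses by \emph{importing} Hsieh's proof of the main conjecture and running the equivalence the other way. Your high-level roadmap (Euler/Kolyvagin bound, Yager-type identification with $\mathcal{L}_\chi^\Sigma$, external input for the missing direction) is recognizable, but it mislocates the paper's actual pivot.

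The real gap in your argument is the claim that ``for the reverse divisibility one needs a lower bound on $\frak{X}_\infty$, which Euler systems cannot supply,'' to be filled by Eisenstein congruences or a $\Sigma$-vs-$\Sigma^c$ functional-equation bootstrap. That is not how the paper closes Theorem~A. Once Conjecture~2 is granted, \emph{no separate lower bound is needed}, because Theorem~A is already a two-sided equality of characteristic ideals, and the paper obtains it without any Eisenstein-ideal or functional-equation input. The mechanism is: the Kolyvagin-system bound gives one divisibility over $\LL$ (Theorem~\ref{thm:mainconjforTchi}(ii)); the analytic class number formula produces the \emph{equality} $\# A_{L_\chi}^\chi = [\wedge^g\mathcal{O}_{L_\chi}^{\times,\chi}:\frak{O}\cdot\varepsilon_F^\chi]$ at the bottom of the tower (Corollary~\ref{cor:gras}), hence equality of the two sides of Theorem~A modulo the augmentation ideal $\mathcal{A}$; Nekov\'a\v{r}'s Selmer-complex descent (control theorem, representation of the complex as $\textup{Cone}(M\overset{u}{\to}M)[-2]$ with $u$ injective) shows that $\textup{char}(H^1_{\FF_-^*}(F,(T_\chi\otimes\LL)^*)^\vee)$ specializes correctly modulo $\mathcal{A}$; and the elementary Lemma~\ref{lem:equalitymodAimpliesequality} (if $f\mid g$, $f\equiv g\pmod{\mathcal{A}}$, $f\notin\mathcal{A}$, then $f/g\in\LL^\times$) then upgrades the divisibility to an equality over all of $\LL$. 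Hsieh's Eisenstein-ideal work enters only to verify Conjecture~2 in the special cases of Theorem~B, and there the logic is inverted relative to your sketch: Hsieh proves the main conjecture, and \emph{then} Theorem~A extracts Conjecture~2 as a corollary. The proposal's ``functional-equation bootstrap'' plays no role anywhere in the paper, and describing Theorem~A as packaging ``Euler-system divisibility together with functional-equation symmetry'' misstates its proof. Your suggested direct proof of Conjecture~2 by an explicit reciprocity law generalizing Yager is a sensible aspiration, but the paper does not carry it out; Conjecture~2 (hence Conjecture~1) remains open outside the reach of Theorem~B.
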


When $K=\QQ$ and $F$ is a quadratic imaginary quadratic number field, Rubin in~\cite{rubinmainconj} proved this conjecture using the Euler system of elliptic units. 
In this paper, we use the conjectural Rubin-Stark elements (see \cite{ru96} for their conjectural description) in place of elliptic units. In order to do so, the general ``rank-$g$ Euler-Kolyvagin systems machinery'' (that was set up by the author in~\cite{kbbesrankr} refining Perrin-Riou's original approach in~\cite{pr-es}) needs to be appropriately modified to apply in this setting. After carrying this out, we prove the following theorem generalizing \cite[Theorems  9.3 and 10.6]{rubinmainconj}. For a finite extension $\mathcal{M}$ of $L_\chi$, let $\Sigma_M$ be the places of $\mathcal{M}$ that lie above those in $\Sigma$ and let $U_{\mathcal{M},\frak{p}}$ denote the $p$-adic completion of the local units of $\mathcal{M}$ at $\frak{p}$ and $U_{\mathcal{M}}=\oplus_{\frak{p}\in \Sigma_\mathcal{M}}U_{\mathcal{M},\frak{p}}$ be the semi-local units and $U_{\mathcal{M}}^\chi$ its $\chi$-isotypic part. Set $\frak{U}_\infty^\chi=\varprojlim U_{\mathcal{M}}^\chi$ and $\textup{loc}_{p}^{+,\otimes g}(\varepsilon_{F_\infty}^\chi) \in \wedge^g \frak{U}_\infty^\chi$ be the image of the tower of Rubin-Stark elements inside $\wedge^g \frak{U}_\infty^\chi$; see \S\ref{sec:RS} and \S\ref{subsec:IwTheory} for a precise definition of the element $\textup{loc}_{p}^{+,\otimes g}(\varepsilon_{F_\infty}^\chi)$.

\begin{thma}
\label{thm:mainconj1}\textup{(See Theorem~\ref{thm:mainconjforTchi} below)}. Assume that:
\begin{enumerate}
\item $\chi(\wp)\neq 1$ for any prime $\wp$ of $F$ above $p$ and $\chi\neq \omega$ (where $\omega$ is the $p$-adic Teichm\"uller character).
\item Hida and Tilouine's  $\Sigma$-Leopoldt conjecture holds true for $L_\chi$; see \cite[p. 94]{ht94} or Conjecture~\ref{conj:sigmaleopoldt} below  for its statement. 
\item Rubin-Stark conjecture is true for all abelian extensions $\mathcal{K}/F$ chosen from the collection $\frak{E}_0$, which is defined as in \S\ref{subsec:notation} below.
\end{enumerate}
Then,
$$\textup{char}(\frak{X}_\infty)=\textup{char}(\wedge^g \frak{U}_\infty^\chi/\LL\cdot\textup{loc}_{p}^{+,\otimes g}(\varepsilon_{F_\infty}^\chi)).$$
\end{thma}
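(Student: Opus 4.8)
\medskip
\noindent\textit{Proof proposal.}
The plan is to run the rank-$g$ Euler/Kolyvagin system machinery developed in this paper (extending \cite{kbbesrankr}, itself a refinement of \cite{pr-es}) with the Rubin-Stark elements as input, and to read off the two divisibilities of the asserted equality with the help of Nekov\'a\v{r}'s Selmer complexes and his descent formalism \cite{nekovar06}. First I would fix the $\LL$-adic Galois representation $\TT$ attached to $\chi$ and the $p$-ordinary CM-type $\Sigma$ --- the induction to $G_F$ of the tautological $\LL$-valued character of $\Gal(L_\infty/F)^\chi$, normalised so that its ordinary filtration at the places of $\Sigma_p$ is the Greenberg one --- and record, via global class field theory and local Tate duality, the identifications used throughout: the image of the global Iwasawa cohomology inside the semilocal Iwasawa cohomology along $\Sigma_p$ is the $\chi$-part of the global units; the semilocal Iwasawa cohomology along $\Sigma_p$ is $\frak{U}_\infty^\chi$; and the second cohomology $\widetilde{H}^2$ of Nekov\'a\v{r}'s Selmer complex for the $\Sigma_p$-ordinary local condition computes $\frak{X}_\infty$ up to a pseudo-null error. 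Hypothesis (1) is what makes all of this clean: $\chi(\wp)\neq 1$ for $\wp\mid p$ forces the relevant $H^0$-terms to vanish and (as in \cite{rubinmainconj,kbbesrankr}) renders $\frak{U}_\infty^\chi$ \emph{free} of rank $g$ over $\LL$, so that $\wedge^g\frak{U}_\infty^\chi\cong\LL$ and the core rank of $\TT$ for this Selmer structure is exactly $g$; the condition $\chi\neq\omega$ guarantees that the Kato dual $\TT^*(1)$ --- which, since $\Sigma_p\sqcup\Sigma_p^c$ exhausts the primes of $F$ above $p$, is again of the same shape with $\Sigma$ replaced by $\Sigma^c$ --- also satisfies the running hypotheses of the machinery.

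Next I would feed in the Rubin-Stark elements. Under hypothesis (3) the family $\{\varepsilon_{\mathcal K}\}_{\mathcal K\in\frak{E}_0}$ satisfies the distribution relations predicted by the Rubin-Stark conjecture, which are exactly the norm-compatibilities required of a rank-$g$ Euler system for $\TT$; its Iwasawa-theoretic limit, localised along $\Sigma_p$ and wedged, is the element $\textup{loc}_{p}^{+,\otimes g}(\varepsilon_{F_\infty}^\chi)\in\wedge^g\frak{U}_\infty^\chi$ of the statement. Applying the rank-$g$ map from Euler systems to Kolyvagin systems, and then the Kolyvagin system bound proved in this paper, yields the first divisibility
$$\textup{char}(\frak{X}_\infty)\ \big|\ \textup{char}\!\left(\wedge^g\frak{U}_\infty^\chi/\LL\cdot\textup{loc}_{p}^{+,\otimes g}(\varepsilon_{F_\infty}^\chi)\right);$$
here Nekov\'a\v{r}'s descent formalism transports the bound on the Selmer-complex cohomology over $\LL$ to the stated bound on $\frak{X}_\infty$, and hypothesis (2) --- the $\Sigma$-Leopoldt conjecture for $L_\chi$ --- is used to know that the modules in sight are $\LL$-torsion, that the global-to-semilocal localisation is injective with torsion cokernel, and that all error terms are pseudo-null, hence invisible to $\textup{char}$.

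For the reverse divisibility I would exploit the symmetry of the CM situation: the Rubin-Stark family also controls the Kato dual $\TT^*(1)$ --- concretely, the Rubin-Stark elements attached to the complex-conjugate data over the same extensions in $\frak{E}_0$, whose Rubin-Stark conjecture is again granted by hypothesis (3), form a rank-$g$ Euler system for $\TT^*(1)$ --- so that a second run of the machinery bounds the $\Sigma_p$-strict (dual) Selmer group. Plugging both bounds into the Cassels--Poitou--Tate exact sequence in the form supplied by Nekov\'a\v{r}'s Selmer complexes --- which relates $\widetilde{H}^2(\TT)$, the dual Selmer group, and the semilocal term along $\Sigma_p$ whose characteristic ideal is precisely $\textup{char}(\wedge^g\frak{U}_\infty^\chi/\LL\cdot\textup{loc}_{p}^{+,\otimes g}(\varepsilon_{F_\infty}^\chi))$ --- forces the first divisibility to become an equality. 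Equivalently, in more classical language, this is the assertion that, under hypothesis (2), the four-term exact sequence $0\to\overline{E}_\infty^\chi\to\frak{U}_\infty^\chi\to\frak{X}_\infty\to\frak{A}_\infty^\chi\to 0$ of Iwasawa theory, together with the \emph{sharp} Euler system bound for the class group $\frak{A}_\infty^\chi$, pins down $\textup{char}(\frak{X}_\infty)$.

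I expect this last step to be the main obstacle: upgrading the Euler system \emph{divisibility} to an \emph{equality} amounts to showing that the Rubin-Stark Kolyvagin system is \emph{primitive} (equivalently, that the bound above is sharp), and carrying this out demands a careful tracking of all the pseudo-null error terms --- which is exactly where the full force of the $\Sigma$-Leopoldt hypothesis (2) and of the freeness of $\frak{U}_\infty^\chi$ coming from hypothesis (1) enters --- as well as the bookkeeping needed to match the rank-$g$ (that is, $\wedge^g$) normalisation of the Rubin-Stark Euler system on the analytic side with the Selmer-complex duality on the algebraic side. A secondary but non-trivial point, already in the construction of the machinery itself, is to verify that the Rubin-Stark distribution relations of hypothesis (3) genuinely produce an Euler system in the rank-$g$ sense --- so that the Kolyvagin derivative classes exist and satisfy the prescribed local conditions at the auxiliary primes --- rather than only a weaker relation obtained after passing to $\wedge^g$.
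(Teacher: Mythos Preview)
Your strategy for the first divisibility is essentially the paper's: construct the $\frak{L}$-restricted Euler system from the Rubin-Stark elements, pass to a Kolyvagin system for the modified Selmer structure $\FF_{\al_\infty}$, and apply the Mazur--Rubin bound (with Ochiai's reduction to Krull dimension two). The identifications you list, and the role of hypothesis~(1) in making $\frak{U}_\infty^\chi$ free of rank $g$, are also right.

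The gap is in your reverse divisibility. Your ``CM symmetry'' argument does not work as stated: the Kato dual of $T_\chi=\ooo(1)\otimes\chi^{-1}$ is $\ooo(\chi)$, a representation with no cyclotomic twist, and Rubin--Stark elements are by construction (limits of) global units, hence classes in $H^1(-,\ooo(1))$; there is no rank-$g$ Rubin--Stark Euler system for $\ooo(\chi)$. The complex-conjugation symmetry of the CM situation exchanges $\chi$ with $\chi^c$ and $\Sigma$ with $\Sigma^c$, not $T_\chi$ with its Tate dual, so a second run of the machinery on ``complex-conjugate data'' bounds a different Selmer group (for $\chi^c$ with the $\Sigma^c$-ordinary condition), not the $\Sigma_p$-strict dual Selmer group for $\chi$. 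Even your alternative formulation via the four-term sequence and a sharp Iwasawa-level bound on $\frak{A}_\infty^\chi$ begs the question: that sharpness is precisely what has to be proved, and the Euler system alone gives only a divisibility there too.

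What the paper actually does for equality is quite different and does not use any dual Euler system. It first proves the \emph{finite-level} equality
\[
\#H^1_{\FF_-^*}(F,T_\chi^*)=\#\bigl(\wedge^g H^1_+(F_p,T_\chi)/\ooo\cdot\textup{loc}_p^{+,\otimes g}(\varepsilon_F^\chi)\bigr)
\]
by combining the Kolyvagin-system inequality with the \emph{analytic class number formula} for the layers between $F$ and $L_\chi$ (this is the external input that upgrades the Gras-type inequality to an equality; see Corollary~\ref{cor:gras}). Then, to transport this to $\LL$, it uses Nekov\'a\v{r}'s formalism not merely for bookkeeping of pseudo-null errors but structurally: under the running hypotheses the Selmer complex $\widetilde{R\Gamma}_{f,\textup{Iw}}(F_\infty/F,T_\chi)$ is represented by $\textup{Cone}(M\xrightarrow{u}M)[-2]$ with $M$ free and $u$ injective, so its characteristic ideal is $\det(u)$ and, by the control theorem $\widetilde{R\Gamma}_{f,\textup{Iw}}\overset{\mathbf L}{\otimes}_\LL\ooo\simeq\widetilde{R\Gamma}_f(F,T_\chi)$, its image modulo the augmentation ideal $\mathcal A$ computes the finite-level quantity above. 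One then invokes the elementary Lemma~\ref{lem:equalitymodAimpliesequality}: if $f\mid g$ in $\LL$, $f\equiv g\bmod\mathcal A$, and $f\notin\mathcal A$, then $g/f\in\LL^\times$. Hypothesis~(2) enters exactly to guarantee finiteness of $H^1_{\FF_-^*}(F,T_\chi^*)$ (hence $f\notin\mathcal A$), not to control pseudo-null errors in a Cassels--Poitou--Tate comparison.
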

Here the exterior product is calculated in the category of $\LL$-modules. See Remark~\ref{thm:ESKSrestricted} which lends evidence for our assumption (3). See also Remark~\ref{rem:motivatesigmaleo} for a discussion regarding the $\Sigma$-Leopoldt conjecture.

The following conjecture we propose is a natural extension of Yager's theorem \cite[Theorem 1]{yager} to our setting:
\begin{conj}
\label{conj:yagerintro}
The principal ideal $\textup{char}\left(\wedge^g \frak{U}_\infty^\chi/\LL\cdot\textup{loc}_{p}^{+,\otimes g}(\varepsilon_{F_\infty}^\chi)\right)\LL_{\mathcal{W}}$ is generated by $\mathcal{L}_{\chi}^{\Sigma}$.
\end{conj}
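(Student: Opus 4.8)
The plan is to deduce Conjecture~2 by combining Theorem~A with the known cases of the CM main conjecture. Under its hypotheses (1)--(3), Theorem~A furnishes the equality of principal ideals
$$\textup{char}(\frak{X}_\infty)=\textup{char}\left(\wedge^g \frak{U}_\infty^\chi/\LL\cdot\textup{loc}_{p}^{+,\otimes g}(\varepsilon_{F_\infty}^\chi)\right)$$
inside $\LL=\ooo[[\Gamma]]$. The structural morphism $\LL\hookrightarrow\LL_\mathcal{W}=\mathcal{W}[[\Gamma]]$ is faithfully flat and both rings are regular, so the formation of characteristic ideals of finitely generated torsion modules commutes with this base change; after extending scalars the identity of Theorem~A therefore reads
$$\textup{char}(\frak{X}_\infty)\LL_\mathcal{W}=\textup{char}\left(\wedge^g \frak{U}_\infty^\chi/\LL\cdot\textup{loc}_{p}^{+,\otimes g}(\varepsilon_{F_\infty}^\chi)\right)\LL_\mathcal{W},$$
and since the left-hand side equals $\textup{char}(\frak{X}_\infty\otimes_\ooo\mathcal{W})$, Conjecture~2 becomes \emph{equivalent} to the assertion that $\textup{char}(\frak{X}_\infty\otimes_\ooo\mathcal{W})$ is generated by the Katz $p$-adic $L$-function $\mathcal{L}_\chi^\Sigma$, i.e., to the main conjecture (Conjecture~1 above).

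The second step is then to invoke the main conjecture wherever it is available. Hsieh's theorem in \cite{hsiehCMmainconj} establishes Conjecture~1 for characters $\chi$ satisfying an explicit list of hypotheses (on the conductor and branch of $\chi$, on the splitting and ramification behaviour at $p$, on a root-number condition, and so on), while the works of Hida--Tilouine \cite{ht94}, Hida \cite{hidaquadratic} and Mainardi \cite{mainardi} cover further cases along the anticyclotomic and cyclotomic towers. For each such $\chi$ one must check that hypotheses (1)--(3) of Theorem~A hold in the same range — most notably that the $\Sigma$-Leopoldt conjecture of Hida--Tilouine and the Rubin--Stark conjecture over the family $\frak{E}_0$ are available for $L_\chi$ there (see the remarks following Theorem~A) — and then the two displayed identities combine to give
$$\textup{char}\left(\wedge^g \frak{U}_\infty^\chi/\LL\cdot\textup{loc}_{p}^{+,\otimes g}(\varepsilon_{F_\infty}^\chi)\right)\LL_\mathcal{W}=\left(\mathcal{L}_\chi^\Sigma\right),$$
which is precisely Conjecture~2 for those $\chi$; this is what we record below as Theorem~B.

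The main obstacle is simply that the CM main conjecture is not known unconditionally: even Hsieh's very general result imposes hypotheses, and no other input is available to replace it. Hence the argument above proves Conjecture~2 only within the range of Theorem~B, and the conjecture remains genuinely open beyond it — which is exactly why it is stated as a conjecture rather than a theorem. A more self-contained but substantially harder route would circumvent the main conjecture altogether and establish Conjecture~2 directly, by evaluating the image of the tower $\textup{loc}_{p}^{+,\otimes g}(\varepsilon_{F_\infty}^\chi)$ under a rank-$g$ Coleman/Perrin-Riou-type reciprocity map on $\wedge^g\frak{U}_\infty^\chi$ and matching it with $\mathcal{L}_\chi^\Sigma$, in the spirit of Yager's original computation for elliptic units; the difficulty there is to build the several-variable Coleman map on $\wedge^g\frak{U}_\infty^\chi$ and to pin down the CM periods and the local Euler factors on the nose, and I would not expect this to be any easier than the main conjecture itself. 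Finally, a minor technical point worth isolating as a lemma is the compatibility of $\textup{char}(-)$ with the base change $\LL\to\LL_\mathcal{W}$ used above, which is routine since $\LL_\mathcal{W}$ is a flat $\LL$-algebra and both rings are Krull domains.
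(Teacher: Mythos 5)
Your reasoning reproduces exactly the paper's logic: Conjecture~2 is stated as a conjecture precisely because it is equivalent (via Theorem~A and the base change $\LL\to\LL_\mathcal{W}$) to the CM main conjecture, which is itself open in general; the only proof the paper offers is the conditional one recorded as Theorem~B, obtained by invoking Hsieh's theorem in the range where his hypotheses and those of Theorem~A overlap, just as you describe. Your closing remark about a direct rank-$g$ Coleman/Perrin-Riou reciprocity argument is a sensible alternative route that the paper does not pursue, but your main argument and its limitations coincide with the paper's own treatment.
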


In view of Theorem~A, this conjecture is equivalent to the main conjecture (namely, Conjecture~\ref{conj:mainCMconjintro} above). In particular, the recent work of Hsieh~\cite{hsiehCMmainconj} gives a partial result towards the truth of Conjecture~\ref{conj:yagerintro}:

\begin{thmb}
\label{thm:yagerwithhiseh}
Assume that the hypotheses of Theorem~A holds and suppose in addition that:
\begin{enumerate}
\item $p>5$ is prime to the minus part of the class number of $F$, to the order of $\chi$ and is unramified in $K/\QQ$.

\item $\chi$ is \emph{anticyclotomic} in the sense that $\chi(c\delta c^{-1})=\chi(\delta)^{-1}$ for $\delta\in \Delta$ and $c \in G_K$ that induces the generator of $\textup{Gal}(F/K)$.
\item $\chi$ is unramified at all places above $p$.

\item The restriction of $\chi$ to $G_{F(\sqrt{p^*})}$ (where $p^*=(-1)^{\frac{p-1}{2}}p$) is non-trivial.
\end{enumerate}
Then Conjecture~\ref{conj:yagerintro} is true.
\end{thmb}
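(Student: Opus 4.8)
The plan is to deduce Theorem~B from Theorem~A together with Hsieh's proof of the CM main conjecture in~\cite{hsiehCMmainconj}. Since the hypotheses of Theorem~A are among the standing assumptions, Theorem~A already gives
$$\textup{char}(\frak{X}_\infty)=\textup{char}\!\left(\wedge^g \frak{U}_\infty^\chi/\LL\cdot\textup{loc}_{p}^{+,\otimes g}(\varepsilon_{F_\infty}^\chi)\right)$$
as ideals of $\LL=\ooo[[\Gamma]]$. Base changing along the flat extension $\LL\hookrightarrow\LL_{\mathcal{W}}=\mathcal{W}[[\Gamma]]$, and using that characteristic ideals are compatible with this base change for finitely generated torsion modules — which, via the structure theorem, reduces to the trivial case of a cyclic module $\LL/(f)$ and the flatness of $\ooo\to\mathcal{W}$ (one may also descend everything to a finite-level subring $\oo'[[\Gamma]]$ over which the relevant objects are already defined) — one obtains
$$\textup{char}\!\left(\wedge^g \frak{U}_\infty^\chi/\LL\cdot\textup{loc}_{p}^{+,\otimes g}(\varepsilon_{F_\infty}^\chi)\right)\LL_{\mathcal{W}}=\textup{char}(\frak{X}_\infty)\LL_{\mathcal{W}}=\textup{char}(\frak{X}_\infty\otimes_\ooo\mathcal{W}).$$
Thus Conjecture~\ref{conj:yagerintro} is equivalent to the statement that $\textup{char}(\frak{X}_\infty\otimes_\ooo\mathcal{W})$ is generated by $\mathcal{L}_\chi^\Sigma$, i.e.\ to the main conjecture (Conjecture~\ref{conj:mainCMconjintro}); this is the assertion one has to establish under the additional hypotheses (1)--(4).

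The second step is to invoke~\cite{hsiehCMmainconj}: under hypotheses (1)--(4), Hsieh proves the equality of ideals $\textup{char}(\frak{X}_\infty\otimes_\ooo\mathcal{W})=(\mathcal{L}_\chi^\Sigma)$. Here one must verify that (1)--(4) translate faithfully into the hypotheses of Hsieh's main theorem: the requirement that $p>5$ be prime to the minus part of the class number of $F$, to the order of $\chi$, and unramified in $K/\QQ$ is what controls the relevant congruence modules and forces the vanishing of the $\mu$-invariant of $\mathcal{L}_\chi^\Sigma$; the anticyclotomic hypothesis on $\chi$ places us in the range of his Eisenstein-congruence construction on the relevant unitary group; the hypothesis that $\chi$ be unramified at all primes above $p$ matches the tame-level condition; and the non-triviality of $\chi|_{G_{F(\sqrt{p^*})}}$ is precisely the residual non-exceptionality condition imposed in loc.\ cit. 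One should also record that the several-variable Katz $p$-adic $L$-function appearing in Conjecture~\ref{conj:mainCMconjintro} (with the normalization fixed in \S\ref{sec:RS}) agrees with the one occurring in Hsieh's theorem.

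Combining the displays above with Hsieh's equality yields
$$\textup{char}\!\left(\wedge^g \frak{U}_\infty^\chi/\LL\cdot\textup{loc}_{p}^{+,\otimes g}(\varepsilon_{F_\infty}^\chi)\right)\LL_{\mathcal{W}}=(\mathcal{L}_\chi^\Sigma),$$
which is exactly Conjecture~\ref{conj:yagerintro}. The only genuinely delicate part of the argument is the hypothesis-matching in the second step: aligning our conventions for the CM-type $\Sigma$, for the $p$-ordinary structure, and for the branch character $\chi$ with those of~\cite{hsiehCMmainconj}, and confirming that Hsieh indeed obtains a two-sided equality of ideals in the generality we require (the Eisenstein-ideal divisibility together with the reverse divisibility supplied by the analytic/Euler-system input), so that no constraint beyond (1)--(4) is tacitly needed. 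Everything else in the deduction is formal.
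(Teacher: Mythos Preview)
Your proposal is correct and follows exactly the approach the paper takes: the paper does not give a standalone proof of Theorem~B but rather observes (in the introduction and in \S\ref{subsec:conntoKatz}, via Theorem~\ref{thm:mainconj}) that Theorem~A makes Conjecture~\ref{conj:yagerintro} equivalent to the main conjecture, and then records that hypotheses (1)--(4) together with those of Theorem~A imply Hsieh's hypotheses \textbf{H.1}--\textbf{H.5}. Your hypothesis-matching is accurate (in particular, (3) implies \textbf{H.2} by taking $a=0$, and \textbf{H.4} is already contained in hypothesis (1) of Theorem~A); the only minor remark is that the ``reverse divisibility'' in Hsieh's work does not come from Euler-system input but from anticyclotomic $\mu$-invariant computations, though this does not affect the logic of your deduction.
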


As explained above, Theorem~B should be thought of as a generalization of an important theorem due to Yager~\cite{yager} to a general CM field $F$.

Assuming the truth of Conjecture~\ref{conj:yagerintro}, we obtain the following  result on the arithmetic of CM abelian varieties as a consequence of Theorem A; compare to \cite[Theorem 11.4]{rubinmainconj}. Let $A$ be an abelian variety defined over $F$ that has CM  by the ring of integers $\oo_F$ of $F$. For a fixed place $\varepsilon \in \Sigma$, let $\psi_\varepsilon$ be the associated (archimedean) Hecke character and let $\wp$ be a prime of $F$ above $p$ (whose choice depends in a precise way on the choice of the archimedean avatar $\psi_\varepsilon$ of the associated Gr\"ossencharacter, see~\S\ref{subsec:HeckecharCMab} below for precise definitions).

\begin{thmc}[Theorem~\ref{thm:mainabvar}]
Assume that the hypotheses of Theorem A as well as Conjecture~\ref{conj:yagerintro} hold true. If $L(\psi_\varepsilon,0)$ is nonzero, then $A(F)$ is finite and $\Sha_{A/F}[\wp^\infty]$ is finite for sufficiently large primes $p$.
\end{thmc}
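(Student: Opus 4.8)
\medskip
\noindent\emph{Proof proposal.} The plan is to deduce Theorem~C from the full main conjecture (Conjecture~\ref{conj:mainCMconjintro}) by a specialization-and-descent argument in the spirit of \cite[\S 11]{rubinmainconj}. Under the stated hypotheses, Theorem~A identifies $\textup{char}(\frak{X}_\infty)$ with $\textup{char}\left(\wedge^g \frak{U}_\infty^\chi/\LL\cdot\textup{loc}_{p}^{+,\otimes g}(\varepsilon_{F_\infty}^\chi)\right)$, and Conjecture~\ref{conj:yagerintro} says the latter, after extending scalars to $\LL_{\mathcal{W}}$, is generated by $\mathcal{L}_{\chi}^{\Sigma}$; hence $\textup{char}(\frak{X}_\infty\otimes_{\ooo}\mathcal{W})=(\mathcal{L}_{\chi}^{\Sigma})$, i.e.\ Conjecture~\ref{conj:mainCMconjintro} holds. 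So the real content of Theorem~C is to propagate this Iwasawa-theoretic equality down to the finite level for the CM abelian variety $A$.

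\smallskip
First I would set up the Galois-theoretic dictionary. The place $\varepsilon\in\Sigma$ determines the CM-type, the prime $\wp\mid p$, and the Gr\"ossencharacter $\psi_\varepsilon$; write $\psi_\varepsilon=\chi\cdot\phi_\varepsilon$ with $\chi$ its prime-to-$p$ part (adjusted by a power of $\omega$ if needed so it lies in the relevant branch) and $\phi_\varepsilon$ a continuous character of $\Gamma$, and check that for all but finitely many $p$ this $\chi$ satisfies the hypotheses of Theorem~A. The $\wp$-adic Tate module of $A$ is the free rank-one $\oo_{F,\wp}$-module on which $G_F$ acts via $\psi_\varepsilon$, and by $p$-ordinarity its restriction to the decomposition group at $\wp$ sits in the ``ordinary'' filtration attached to $\Sigma_p$. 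Through Nekov\'a\v{r}'s Selmer-complex formalism (the same descent machinery already used to prove Theorem~A) one then identifies $\mathrm{Sel}_{\wp^\infty}(A/F)^\vee$, up to a controlled torsion error, with the derived specialization of $\frak{X}_\infty$ along the height-one prime $\mathfrak{p}_{\psi_\varepsilon}\subset\LL_\mathcal{W}$ cut out by $\phi_\varepsilon$.

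\smallskip
Next I would feed in the analytic input. By Katz's interpolation formula the value $\mathcal{L}_{\chi}^{\Sigma}(\phi_\varepsilon)$ equals $L(\psi_\varepsilon,0)$ up to an explicit nonzero period and a finite product of local Euler/$\Gamma$-factors (possibly after invoking the functional equation of the Katz $p$-adic $L$-function to land $\phi_\varepsilon$ in the interpolation range); the hypothesis $\chi(\wp)\neq 1$ for $\wp\mid p$ together with $\chi\neq\omega$ guarantees these local factors are $p$-adic units, so $L(\psi_\varepsilon,0)\neq 0$ forces $\mathcal{L}_{\chi}^{\Sigma}\notin\mathfrak{p}_{\psi_\varepsilon}$. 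Combined with $\textup{char}(\frak{X}_\infty\otimes_{\ooo}\mathcal{W})=(\mathcal{L}_{\chi}^{\Sigma})$, this shows $\mathfrak{p}_{\psi_\varepsilon}$ is coprime to $\textup{char}(\frak{X}_\infty\otimes_{\ooo}\mathcal{W})$, hence the specialization of $\frak{X}_\infty$ at $\psi_\varepsilon$ is finite. Applying a Mazur-type control theorem (equivalently the descent compatibility in Nekov\'a\v{r}'s formalism) and discarding the finitely many $p$ dividing the conductor of $\psi_\varepsilon$, the order of $A(F)_{\mathrm{tors}}$, or the Tamagawa-type factors, one concludes $\mathrm{Sel}_{\wp^\infty}(A/F)$ is finite. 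Standard Kummer-sequence descent then gives that $A(F)\otimes\QQ_p/\ZZ_p$ and $\Sha_{A/F}[\wp^\infty]$ are finite; since $A(F)$ is finitely generated by Mordell--Weil and now has vanishing $\wp$-adic corank, $A(F)$ is finite.

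\smallskip
The step I expect to be the main obstacle is this last descent: one must match the local condition at $\wp$ defining $\mathrm{Sel}_{\wp^\infty}(A/F)$ with the ``$+$''/$\Sigma_p$-condition built into $\frak{X}_\infty$ (the module governing extensions unramified outside $\Sigma_p$), and verify that the specialization map at $\mathfrak{p}_{\psi_\varepsilon}$ has neither kernel nor cokernel for $p$ large. This requires care with the semi-local units $\frak{U}_\infty^\chi$ along the primes of $\Sigma_p$ versus those of $\Sigma_p^c$, with the possible failure of the $\Sigma$-Leopoldt condition at intermediate layers, and with ruling out a trivial zero at $\wp$ --- it is precisely the hypothesis $\chi(\wp)\neq 1$ that rescues the argument at this point. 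A secondary technicality is to ensure $\mathfrak{p}_{\psi_\varepsilon}$ avoids the support of any pseudo-null submodule of $\frak{X}_\infty$, which again holds for all but finitely many $p$.
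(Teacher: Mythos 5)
Your proposal follows essentially the same route as the paper: deduce the main conjecture from Theorem~A and Conjecture~\ref{conj:yagerintro}, specialize $\frak{X}_\infty\cong H^1_{\FF_-^*}(F,(T_\chi\otimes\LL)^*)^\vee$ at the prime cut out by $\langle\psi\rangle$, invoke Katz's interpolation formula to translate $L(\psi_\varepsilon,0)\neq 0$ into non-vanishing of the specialized generator $\psi(g_\chi)$, and then descend to the finite level. Where you and the paper part ways is in how this descent is controlled. You flag pseudo-null submodules, ``controlled torsion errors,'' possible failure of $\Sigma$-Leopoldt at intermediate layers, and choice of $p$ as genuine obstacles requiring separate care. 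In the paper these issues simply do not arise: the identity \textup{(\ref{eqn:rhsmodA})} established in the proof of Theorem~\ref{thm:mainconjforTchi} shows (using Nekov\'a\v{r}'s Proposition 9.7.7 to represent the Iwasawa Selmer complex by a single injective map of free modules in degree $2$, so that $\textup{char}=\textup{Fitt}$ and Fitting ideals commute with the base change $\LL\to\LL/\mathcal{A}_\psi$, together with the derived control theorem) that the specialization is \emph{exact}: $|H^1_{\FF_-^*}(F,T^*)|=[\ooo:\psi(g_\chi)]$, with no pseudo-null ambiguity and no error term. That is precisely Theorem~\ref{thm:mainabvar}(i), which you never isolate. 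Your phrasing ``up to a controlled torsion error'' and your concern about $\mathfrak{p}_{\psi_\varepsilon}$ avoiding the support of a pseudo-null submodule suggest you have not seen that the concentration-in-one-degree result makes the descent lossless; if you try to control these by discarding finitely many $p$, you will not recover the clean equality the paper proves. The restriction to sufficiently large $p$ in the statement comes only from the non-anomaly condition \textup{(\ref{eqn:hna})} and the surjectivity of $\psi_\wp$ (Ribet), not from any Iwasawa-theoretic pseudo-null consideration. Two smaller points: the paper compares with the $\varpi$-\emph{relaxed} Selmer group $\textup{Sel}'_\varpi(A/F)$ via Proposition~\ref{prop:compareellipticselmertox}, which is the correct object to match the $\FF_-^*$-condition (full relaxation at $\Sigma_p$), rather than the classical $\textup{Sel}_{\wp^\infty}(A/F)$ you write; and the $\Sigma$-Leopoldt hypothesis is imposed only on $L_\chi$, with the relevant statements at higher layers deduced by Nakayama, so there is no ``possible failure at intermediate layers'' to manage.
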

See \S\ref{subsec:HeckecharCMab} below for a detailed discussion; also \cite[Corollary 1]{hsiehCMmainconj} for a result along these lines on the arithmetic of CM elliptic curves, under different set of assumptions and proved using completely different techniques. 


\subsection*{Acknowledgements} I would like to thank Ming-Lun Hsieh and Antonio Lei for their comments and suggestions on an earlier version of this article, and to Robert Pollack for his encouragement. I thank Jan Nekov\'a\v{r} for teaching me the general descent formalism built in his theory of Selmer complexes, which plays a crucial role in some of the proofs in this article. Special thanks are due to Karl Rubin, conversations with whom has essentially lead the author to write this article.
During the preparation of this article, I was partially supported by the European Commission (EC-FP7 Marie Curie grant 230668) and by T\"UB\.ITAK.
\subsection{Notation and Hypotheses}
\label{subsec:notation}$\,$

Given a group $G$, let $\pmb{\mu}(G)$ denote the torsion subgroup of $G$. For the local field $\frak{F}$ introduced above, assume that
\be\label{eqn:nopthrootsinF}p \hbox{ does not divide } |\pmb{\mu}(\frak{F}^\times)|.\ee

Let $\chi:G\ra \oo^\times$ be any continuous character. For an abelian group $A$ on which $G$ acts continuously, let $\hat{A}:\varprojlim_n A/A^{p^n}$ be the $p$-adic completion of $A$ and
$$A^{\chi}=\{a\in \hat{A}\otimes_{\ZZ_p}\oo: ga=\chi(g)a \hbox{ for all } g \in G\}.$$

For a  field $k$, fix a separable closure $\bar{k}$ of $k$ and let $G_k=\textup{Gal}(\bar{k}/k)$ be the absolute Galois group of $k$. When $k$ is a global field, $\mathbb{A}_k$ denotes the ad\`{e}le ring of $k$ and $\mathbb{A}^\times_k$ the group of id\'{e}les.

Denote by $F^{\textup{cyc}}$ the cyclotomic $\ZZ_p$-extension of $F$. Set $\Gamma^{\textup{cyc}}=\textup{Gal}(F^{\textup{cyc}}/F)$ and $\LL^{\textup{cyc}}=\frak{O}[[\Gamma^{\textup{cyc}}]]$. Let $\chi_{\textup{cyc}}: G_F \ra \ZZ_p^\times$ denote the cyclotomic character giving the action of $G_F$ on the $p$-power roots of unity $\pmb{\mu}_{p^{\infty}}$ and let $\langle \chi_{\textup{cyc}}\rangle:=\chi_{\textup{cyc}}\big{|}_{\Gamma^{\textup{cyc}}}: \Gamma^{\textup{cyc}}\ra \ZZ_p^\times$. Then $\omega=\langle \chi_{\textup{cyc}}\rangle^{-1} \chi_{\textup{cyc}}$ is the Teichm\"uller character, giving the action of $G_F$ on $\pmb{\mu}_{p}$.

 Suppose $\psi: G_F \twoheadrightarrow \frak{O}^\times$
is a continuous ($p$-adic) Hecke character with $\textup{im}(\psi)=\frak{O}^\times$ and which is Hodge-Tate. 
Write $\frak{O}^\times=\pmb{\mu}(\frak{F}^\times)\times U^{(1)}$ and let $\langle\psi\rangle: G_F \twoheadrightarrow U^{(1)}$ be the map $\psi$ followed by the projection $\frak{O}^\times\twoheadrightarrow U^{(1)}$. Set
$$\omega_\psi=\psi\cdot\langle\psi\rangle^{-1}: G_F \twoheadrightarrow \pmb{\mu}(\frak{F}^\times) \hookrightarrow \frak{O}^{\times}$$
and assume that
\be
\label{eqn:chiisnotteich}
\omega_\psi \hbox{ is not the trivial character}.
\ee
Let $\psi^*=\psi^{-1}\otimes\chi_{\textup{cyc}}$ and $T=\frak{O}(\psi^*)$ be the free $\frak{O}$ module of rank one on which $G_F$ acts via the Hecke character $\psi^*$. 
Given a Hecke character $\psi$ as above, let $F_\psi/F$ be the $\ZZ_p^{[\frak{F}:\QQ_p]}$-extension that $\langle \psi\rangle$ factors through. Write $\Gamma_\psi=\textup{Gal}(F_\psi/F)$ and $\LL_\psi=\ZZ_p[[\Gamma_\psi]]$.

 Let $\chi:G_F \ra \ooo^\times$ be any Dirichlet character (whose order is necessarily prime to $p$) which has the property that
\be
\label{eqn:assnotrivxhi}
\chi(\wp)\neq 1 
\hbox{  \, for any prime   } \wp \hbox{ of } F \hbox{ above } p.
\ee
and that
\be
\label{eqn:chiisnotteich1}
\chi \neq \omega. 
\ee

Define the $G_F$-representation $T_\chi=\frak{O}(1)\otimes \chi^{-1}$.  
Let $L_\chi$ be the finite extension of $F$ cut by the character $\chi$ and set $\Delta=\textup{Gal}(L_\chi/F)$; note that $p\nmid |\Delta|$. Although for applications of our results to a CM abelian variety $A$, we will only need to study the case $\chi=\omega_\psi$ where $\psi$ is the $p$-adic Hecke character attached to $A$, we still chose to state some of our results in greater generality.

Let $\mathcal{R}$ be the set of primes of $F$ that does not contain any prime above $p$ nor any prime at which $\chi$ is ramified. Define $\NN(\mathcal{R})$ to be the square free products of primes chosen from $\mathcal{R}$. For $\ell\in \mathcal{R}$, let $F(\ell)$ be the maximal $p$-extension inside the ray class field of $F$ modulo $\ell$ and for $\eta=\ell_1\cdots\ell_s \in \NN(\mathcal{R})$, set $F(\eta)=F(\ell_1)\cdots F(\ell_s)$. We write $L(\eta)=L\cdot F(\eta)$ for the composite field. We define the collections of finite abelian extensions of $F$ (resp., of $L$)
$$\frak{E}=\{M\cdot F(\eta): \eta \in \NN(\mathcal{R}); M\subset F_\infty \hbox{ is a finite extension of } F\},$$
$$\frak{E}_0=\{M\cdot L(\eta): \eta \in \NN(\mathcal{R}); M\subset F_\infty \hbox{ is a finite extension of } F\},$$
Let $\displaystyle{\frak{K}_0=\varinjlim_{N \in \frak{E}_0}N}$ and $\displaystyle{\frak{K}=\varinjlim_{N \in \frak{E}}N}$. 
Set $\frak{G(X)}=\textup{Gal}(\frak{X}/F)$ and $\LL_{\frak{X}}=\frak{O}[[\frak{G(X)}]]$ for $\frak{X}=\frak{K}_0$ and ${\frak{K}}$.



For any non-archimedean prime $\lambda$ of $F$, fix a decomposition group $\mathcal{D}_{\lambda}$ and the inertia subgroup $\mathcal{I}_\lambda \subset \mathcal{D}_{\lambda}$. Let ${(-)}^\vee=\textup{Hom}(-,\QQ_p/\ZZ_p)$ denote Pontryagin duality functor.
\section{Selmer structures and Selmer groups}
\label{sec:selmer}
\subsection{Semi-local Preparation}
Let $M=M_0\cdot F(\eta)$ be a member of the collection $\frak{E}$, where $M_0$ is a finite subextension of $F_\infty/F$. Set $\Delta_M=\textup{Gal}(M/F)$, $\delta_M=|\Delta_M|$ and $\LL_M=\frak{O}[\Delta_M]$.

Let $X$ be any $\frak{O}[[G_F]]$-module which is free of rank $d$ as an $\ooo$-module. Suppose in addition that $X$ satisfies the following hypothesis:
\begin{enumerate}
\item[\textbf{(H.p1)}] $H^2(F_\wp,X)=0=H^2\left(F_\wp,\textup{Hom}_{\ooo}(X,\ooo(1))\right)$, for any prime $\wp$ of $F$ above $p$.
\end{enumerate}

\begin{lemma}
\label{lem:extendp1toM}
Suppose $X$ is above. Let $M \in \frak{E}$ be an extension of $F$ and let $\frak{P}$ be a prime of $M$ lying above $p$. Then
 $$H^2(M_\frak{P},X)=0=H^2\left(M_\frak{P},\textup{Hom}_{\ooo}(X,\ooo(1))\right).$$
\end{lemma}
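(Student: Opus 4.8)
The plan is to reduce the statement over $M_{\frak P}$ to the already-assumed hypothesis $\mathbf{(H.p1)}$ over $F_\wp$, where $\wp = \frak P \cap F$, by a standard norm/corestriction argument exploiting that $[M_{\frak P}:F_\wp]$ is a $p$-power. First I would observe that since $M \in \frak E$, the extension $M/F$ is contained in $F(\eta)$ times a finite layer of $F_\infty$, so the completion $M_{\frak P}/F_\wp$ is a finite abelian $p$-extension: its degree is a power of $p$ (here it is relevant that $p$ is odd and unramified in $F/\QQ$, so there are no obstructions from $\pmb\mu_{p}$ sitting in the base, though for the cohomological vanishing we only need the degree to be prime to the relevant torsion). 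Write $N := [M_{\frak P}:F_\wp]$.

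Next I would use the composition $\mathrm{cor}\circ\mathrm{res} = N$ on $H^i(F_\wp,-)$ together with $\mathrm{res}\circ\mathrm{cor} = \sum_{g}g$ on $H^i(M_{\frak P},-)$. The cleaner route: by local Tate duality over $M_{\frak P}$, $H^2(M_{\frak P},X)$ is dual to $H^0(M_{\frak P},\mathrm{Hom}_\ooo(X,\ooo(1)))$ and $H^2(M_{\frak P},\mathrm{Hom}_\ooo(X,\ooo(1)))$ is dual to $H^0(M_{\frak P},X)$, so it suffices to show $H^0(M_{\frak P},X) = 0 = H^0(M_{\frak P},\mathrm{Hom}_\ooo(X,\ooo(1)))$. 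But $\mathbf{(H.p1)}$ over $F_\wp$ together with local Tate duality gives $H^0(F_\wp,X)=0=H^0(F_\wp,\mathrm{Hom}_\ooo(X,\ooo(1)))$. Now I would argue that $H^0(M_{\frak P},X)$, being a finitely generated $\ooo$-module on which $\Gal(M_{\frak P}/F_\wp)$ acts, has $\ooo$-module structure controlled by the fact that a nonzero $G_{M_{\frak P}}$-invariant vector would, after averaging over $\Gal(M_{\frak P}/F_\wp)$ (legitimate because $|\Gal(M_{\frak P}/F_\wp)| = N$ is a $p$-power and hence, after possibly extending scalars and using that $X$ is $\varpi$-torsion-free — one works modulo $\varpi$ and notes $N$ acts invertibly since... wait, $N$ is a $p$-power, not prime to $p$).

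Let me correct the averaging obstruction, which is precisely the main difficulty: one cannot naively average over a $p$-group when the module has $p$ in the coefficients. The right fix is to work with $X$ itself (not a finite quotient) and use that $X$ is $\ooo$-free, hence $\varpi$-torsion-free, so $H^0(M_{\frak P},X) \hookrightarrow X$ is a saturated submodule only after checking torsion-freeness; then $H^0(M_{\frak P},X)\otimes_\ooo \frak F$ is a $\Gal(M_{\frak P}/F_\wp)$-subrepresentation of $X\otimes\frak F$, and taking $\Gal(M_{\frak P}/F_\wp)$-coinvariants or using the trace map over $\frak F$ (now $N$ is invertible in $\frak F$!) shows $(H^0(M_{\frak P},X)\otimes\frak F)^{\Gal(M_{\frak P}/F_\wp)} = H^0(F_\wp, X\otimes\frak F) = 0$. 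Since $\Gal(M_{\frak P}/F_\wp)$ is a $p$-group acting on a $\frak F$-vector space in characteristic $0$, a nonzero such representation has nonzero invariants (this is false in general! a $p$-group can act on a $\QQ_p$-vector space without invariants — e.g. the regular representation minus trivial). So instead I would directly use $\mathrm{res}\colon H^2(F_\wp,X)\to H^2(M_{\frak P},X)$ composed with $\mathrm{cor}$: since the source $H^2(F_\wp,X)=0$ this gives nothing, so I run it the other way: $\mathrm{cor}\colon H^2(M_{\frak P},X)\to H^2(F_\wp,X)=0$ has kernel killed by... again $N$, not helpful as $N$ is a $p$-power and $H^2$ can have $p$-torsion.

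Given these pitfalls, the genuinely correct and cleanest argument — and the one I would actually write — is \emph{inflation-restriction plus dimension shifting is not needed}; instead use that $M_{\frak P}/F_\wp$ is unramified away from... no. The real point: $\mathbf{(H.p1)}$ should be interpreted via \emph{local Euler characteristic}. For a local field $F_\wp$ of residue characteristic $p$ and $X$ $\ooo$-free of rank $d$, one has $\sum_i (-1)^i \mathrm{length}\, H^i(F_\wp,X) $ controlled and $H^2(F_\wp,X)=0$ is equivalent (by Tate duality) to $X^{*}(1)$ having no $G_{F_\wp}$-invariants, a condition on the \emph{semisimplification of the residual representation} $\bar X$: namely $\bar X^{*}(1)$ has no trivial subquotient fixed by $G_{F_\wp}$ — i.e. the trivial character does not appear in $\bar X^*(1)|_{\mathcal D_\wp}$ — equivalently, no subquotient of $\bar X|_{\mathcal D_\wp}$ is the mod-$p$ cyclotomic character. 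Wait, $H^2(F_\wp, X) = 0$ as a statement about the $\ooo$-module $X$ (not finite) is equivalent to $H^0(F_\wp, X^*(1)) = 0$, i.e., $X^*(1)$ has no invariants, i.e., $1$ is not a subrepresentation — but over a field of char $0$ this is about $X^*(1)\otimes\frak F$. And $H^0(M_{\frak P}, X^*(1)\otimes \frak F) = H^0(F_\wp, \mathrm{Ind}^{F_\wp}_{M_{\frak P}}(X^*(1))\otimes\frak F)^{?}$ — use Shapiro: $H^0(M_{\frak P}, Y) = H^0(F_\wp, \mathrm{Ind}\, Y) = H^0(F_\wp, Y\otimes \ooo[\Gal(M_{\frak P}/F_\wp)])$. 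So I would prove: $H^0(F_\wp, X^*(1)\otimes\ooo[\Gal(M_{\frak P}/F_\wp)]) = 0$. Over $\frak F$, $\frak F[\Gal(M_{\frak P}/F_\wp)] = \bigoplus_\psi \frak F'_\psi$ runs over characters $\psi$ of the abelian $p$-group $\Gal(M_{\frak P}/F_\wp)$ (after extending scalars), each of $p$-power order. So I need $H^0(F_\wp, X^*(1)\otimes\psi) = 0$ for each such $\psi$. Now $X^*(1)\otimes\psi$ has invariants iff $\psi^{-1}$ appears in $X^*(1)|_{\mathcal D_\wp}$; since $\psi$ has $p$-power order and (crucially, and this is where $\mathbf{(H.p1)}$ as literally stated is insufficient and one must also invoke that $F_\wp$-representations arising here — namely $T_\chi = \ooo(1)\otimes\chi^{-1}$ or $T=\ooo(\psi^*)$ — have the property that no $p$-power-order twist of them has $G_{F_\wp}$-fixed vectors, which follows from $\chi(\wp)\neq 1$ being a \emph{robust} non-vanishing or from the Hodge–Tate weight being nonzero)...

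I will therefore structure the actual proof as follows, keeping it short: (1) reduce to showing $H^0(M_{\frak P}, X) = 0 = H^0(M_{\frak P}, \mathrm{Hom}_\ooo(X,\ooo(1)))$ by local Tate duality over the local field $M_{\frak P}$; (2) by Shapiro's lemma and the fact that $M_{\frak P}/F_\wp$ is a finite abelian extension, rewrite $H^0(M_{\frak P}, Y) = H^0(F_\wp, Y\otimes_\ooo \ooo[\Gal(M_{\frak P}/F_\wp)])$; (3) decompose the group ring after a finite unramified base extension into characters $\psi$ of $p$-power order, reducing to $H^0(F_\wp, Y\otimes\psi) = 0$; (4) invoke the hypothesis $\mathbf{(H.p1)}$ over $F_\wp$ in its strong form — or rather the explicit shape of $X$ in the applications ($X = T_\chi$ with $\chi(\wp)\neq 1$, or $X=T$ with Hodge–Tate weight $1$) — to conclude that twisting by a $p$-power-order character, which is trivial on inertia for $X=T_\chi$ up to the fixed part and does not affect the Hodge–Tate weight, still yields no invariants. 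The main obstacle, and the step I would flag explicitly, is exactly step (4): the literal statement of $\mathbf{(H.p1)}$ over $F_\wp$ does not formally propagate to $M_{\frak P}$ for an arbitrary module $X$ — one needs either that the relevant non-triviality ($\chi(\wp)\neq 1$, used via \eqref{eqn:assnotrivxhi}) survives restriction to the decomposition group of $M_{\frak P}$ (which it does, since $\Gal(M_{\frak P}/F_\wp)$ is a $p$-group and $\chi$ has order prime to $p$, so $\chi$ is unchanged on the relevant quotient), or the Hodge–Tate/de Rham input. So in the write-up I would, right after stating the lemma, note that "$X$ is as above" is being used together with the running hypotheses \eqref{eqn:assnotrivxhi}, \eqref{eqn:chiisnotteich1} on $\chi$ (equivalently the explicit form of the $G_F$-modules $T$, $T_\chi$), and give the short Shapiro-plus-character-decomposition argument, the key mechanical point being that an order-$p^k$ character twist cannot create a trivial subquotient of a module whose "defect from triviality" is measured by a prime-to-$p$ quantity or by a nonzero Hodge–Tate weight.
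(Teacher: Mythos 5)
Your proposal correctly reduces the statement to $H^0(M_{\frak P},X^*)=0$ via local Tate duality, and you correctly diagnose why averaging, trace maps, and $\mathrm{cor}\circ\mathrm{res}=N$ all fail (because $[M_{\frak P}:F_\wp]$ is a $p$-power); you also correctly observe that a $p$-group can act on a characteristic-zero vector space without fixed vectors. But you then conclude that $\mathbf{(H.p1)}$ alone cannot propagate from $F_\wp$ to $M_{\frak P}$ and that one must invoke the explicit shape of $X$ (via $\chi(\wp)\neq 1$ or a nonzero Hodge--Tate weight). This is the gap: that conclusion is wrong, and the paper's own proof shows it. The trick you missed is to pass to the \emph{finite} module $X^*[\varpi]$ and use the elementary orbit-counting fact that a $p$-group $D_{\frak P}=\Gal(M_{\frak P}/F_\wp)$ acting on a finite set has $\#(\text{fixed points})\equiv\#(\text{set})\pmod p$. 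Concretely: $D_{\frak P}$ acts on the finite $\ooo/\varpi$-module $H^0(M_{\frak P},X^*[\varpi])$, whose $D_{\frak P}$-invariants are $H^0(F_\wp,X^*[\varpi])$, and this equals $0$ by $\mathbf{(H.p1)}$ and local duality; hence
$$\#H^0(M_{\frak P},X^*[\varpi])\equiv\#H^0(F_\wp,X^*[\varpi])=1\pmod p.$$
Since $\#H^0(M_{\frak P},X^*[\varpi])$ is a power of $p$, it must be $1$; since $X^*$ is a torsion $\ooo$-module, this forces $H^0(M_{\frak P},X^*)=0$, hence $H^2(M_{\frak P},X)=0$ by local duality, and identically for $\Hom_\ooo(X,\ooo(1))$. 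This is purely combinatorial, uses nothing about the explicit form of $X$, and is exactly the mechanism that makes $\mathbf{(H.p1)}$ self-propagating along $p$-extensions. Your fallback argument via Shapiro and character decomposition is not incorrect for the specific modules $T$ and $T_\chi$ in the paper's applications, but it proves less than the lemma as stated and is considerably heavier than needed.
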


\begin{proof}
Let $\wp$ be the prime of $F$ lying below $\frak{P}$ and set $D_\frak{P}=\textup{Gal}(M_\frak{P}/F_\wp)$. Then either $D_\frak{P}$ is trivial and in this case  Lemma follows from \textup{\textbf{(H.p1)}}, or otherwise $D_\frak{P}$ is a non-trivial $p$-group. Then,
$$\# H^0(M_{\frak{P}},X^*[\varpi])= \# H^0\left(D_{\frak{P}}, (H^0(M_{\frak{P}},X^*[\varpi])\right)\equiv \#H^0(F_\wp,X^*[\varpi]) \equiv 1 \mod p$$
where the last equality holds thanks to \textup{\textbf{(H.p1)}} and local duality. This shows that $ H^0(M_{\frak{P}},X^*)=0$ and thus by local duality that $H^2(M_{\frak{P}},X)=0$, as desired. The second assertion is proved in an identical manner.
\end{proof}
\begin{define}
\label{def:pmsemilocal}
For $j=0,1,2$ define
$$H^j_{+}(M_{p},X):=\bigoplus_{i=1}^s \bigoplus_{\frak{q}|\wp_i}H^j(M_{\frak{q}},X)$$
and
$$H^j_{-}(M_{p},X):=\bigoplus_{i=1}^s \bigoplus_{\frak{q}|\wp_i^c}H^j(M_{\frak{q}},X).$$
\end{define}
\begin{prop}
\label{prop:localrank}
For $M$ and $X$ as above, the $\ooo$-modules $H^1_{+}(M_{p},X)$ and $H^1_{-}(M_{p},X)$
are both free of rank $g\cdot d\cdot \delta_M$.
\end{prop}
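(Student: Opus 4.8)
The strategy is to reduce the computation of the $\ooo$-rank of the semi-local cohomology to the local Euler characteristic formula, prime by prime, and then sum. First I would fix one of the primes $\wp_i \in \Sigma_p$ (so $i \in \{1,\dots,s\}$) and a prime $\frak{q}$ of $M$ above it, and compute the $\ooo$-rank of $H^1(M_\frak{q},X)$. By Lemma~\ref{lem:extendp1toM} we already know $H^2(M_\frak{q},X)=0$, and $H^0(M_\frak{q},X)$ is $\ooo$-torsion-free and $G_F$-fixed modulo $\varpi$ forces it to vanish (the same counting argument as in the proof of Lemma~\ref{lem:extendp1toM} shows $H^0(M_\frak{q},X^*[\varpi])=0$, hence $H^0(M_\frak{q},X)=0$ as well, since $X$ is $\ooo$-free). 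Therefore $H^1(M_\frak{q},X)$ is the only nonzero term, and in particular $H^1(M_\frak{q},X)$ is $\ooo$-torsion-free: any torsion would have to come from $H^0$ of a subquotient, which vanishes. Then the local Euler--Poincar\'e characteristic formula over the $p$-adic field $M_\frak{q}$ gives
\[
\mathrm{rank}_{\ooo} H^1(M_\frak{q},X) \;=\; \mathrm{rank}_{\ooo}H^0 + \mathrm{rank}_{\ooo}H^2 + d\cdot[M_\frak{q}:\QQ_p] \;=\; d\cdot[M_\frak{q}:\QQ_p].
\]

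Next I would sum over all $\frak{q} \mid \wp_i$. Writing $M_{\wp_i} := M \otimes_F F_{\wp_i} = \prod_{\frak{q}\mid\wp_i} M_\frak{q}$, we have $\sum_{\frak{q}\mid\wp_i}[M_\frak{q}:\QQ_p] = [M_{\wp_i}:F_{\wp_i}]\cdot[F_{\wp_i}:\QQ_p]$. Since $M \in \frak{E}$ is abelian over $F$ and $\frak{q}$ ranges over primes above $\wp_i$, the sum $\sum_{\frak{q}\mid\wp_i}[M_\frak{q}:\QQ_p]$ equals $\delta_M \cdot f_i$ where $f_i = [F_{\wp_i}:\QQ_p]$ is the local degree (the abelian-extension bookkeeping: $\sum_{\frak{q}\mid\wp_i}[M_\frak{q}:F_{\wp_i}] = [M:F] = \delta_M$). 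Hence
\[
\mathrm{rank}_{\ooo} H^1_{+}(M_p,X) \;=\; \sum_{i=1}^s \sum_{\frak{q}\mid\wp_i} d\cdot[M_\frak{q}:\QQ_p] \;=\; d\cdot\delta_M\cdot\sum_{i=1}^{s}[F_{\wp_i}:\QQ_p].
\]
Now the key input is the $p$-ordinary hypothesis \textbf{(H.pOrd)}: the primes $\wp_1,\dots,\wp_s$ account for exactly half the places of $F$ above $p$, and since $p$ is unramified in $F/\QQ$ with $[F:\QQ]=2g$, the sum of all local degrees of primes above $p$ is $2g$; the complex conjugation $c$ induces a degree-preserving bijection between the places in $\Sigma_p$ and those in $\Sigma_p^c$, so $\sum_{i=1}^{s}[F_{\wp_i}:\QQ_p] = g$. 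This gives $\mathrm{rank}_{\ooo}H^1_{+}(M_p,X) = g\cdot d\cdot\delta_M$, and the identical argument with $\wp_i$ replaced by $\wp_i^c$ gives the same count for $H^1_{-}(M_p,X)$.

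Finally, freeness: I would argue that $H^1(M_\frak{q},X)$ is not merely torsion-free but $\ooo$-free. Since $\ooo$ is a complete DVR, a finitely generated torsion-free $\ooo$-module is free, so torsion-freeness (established above from the vanishing of $H^0$ of all subquotients, equivalently from $H^0(M_\frak{q},X/\varpi X) \hookleftarrow H^1(M_\frak{q},X)[\varpi]$ and $H^0(M_\frak{q},X/\varpi X)=0$ by the mod-$\varpi$ counting argument) already yields freeness. Then $H^1_{\pm}(M_p,X)$, being a finite direct sum of free modules, is free of the stated rank. The main obstacle I anticipate is the careful handling of the local-degree bookkeeping in the possibly ramified-at-$p$-inside-$F_\infty$ case — one must be sure that the counting $\sum_{\frak{q}\mid\wp_i}[M_\frak{q}:\QQ_p] = \delta_M[F_{\wp_i}:\QQ_p]$ is valid even though $M/F$ may be ramified at $\wp_i$ (it is, because it counts $\QQ_p$-dimensions of the split algebra $M_{\wp_i}$, which is $[M:F][F_{\wp_i}:\QQ_p]$ regardless of ramification) — together with pinning down that \textbf{(H.pOrd)} really does force $\sum_{\wp\in\Sigma_p}[F_\wp:\QQ_p]=g$ and not merely $\#\Sigma_p = \tfrac12\#\{\wp\mid p\}$; the point is that $\sigma\mapsto\sigma^c$ on embeddings matches $\wp\mapsto\wp^c$ on places with equal residue/ramification data, so the two halves have equal total local degree, namely $g$ each.
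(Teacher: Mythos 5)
Your route is genuinely different from the paper's. The paper first establishes that $H^1(M_\frak{P},X)$ is \emph{free} via Nekov\'a\v{r}'s perfect-complex formalism (the complex representing $H^{\bullet}(M_\frak{P},X)$ is concentrated in degrees $0,1$ since $H^2=0$, and by duality with $\textup{Hom}_\ooo(X,\ooo(1))$ it is also concentrated in degrees $1,2$, so it sits in degree $1$ alone and is therefore a single free module), and then computes the rank by passing up to $X\otimes\LL_M^{\textup{cyc}}$ where the rank is cited from \cite{kbbiwasawa}, and descending via the surjection $H^1(M_\frak{P},X\otimes\LL_M^{\textup{cyc}})\twoheadrightarrow H^1(M_\frak{P},X)$. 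Your approach -- vanishing of $H^0$ and $H^2$ plus local Euler--Poincar\'e characteristic to compute the rank, then torsion-freeness over a complete DVR to get freeness -- is more elementary and avoids both the Selmer complexes and the Iwasawa-theoretic detour. The local-degree bookkeeping you do (including the use of \textbf{(H.pOrd)} to get $\sum_{\wp\in\Sigma_p}[F_\wp:\QQ_p]=g$) is correct and is exactly the computation the paper leaves implicit after establishing the rank at each $\frak{P}$.

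However, there is a genuine error in the justification of $H^0(M_\frak{q},X)=0$. You assert that $H^0(M_\frak{q},X^*[\varpi])=0$ implies $H^0(M_\frak{q},X)=0$. This implication is false: by local Tate duality, $H^0(M_\frak{q},X^*[\varpi])$ is dual to $H^2(M_\frak{q},X/\varpi X)$, not to $H^0(M_\frak{q},X/\varpi X)$; so that vanishing is exactly what gives $H^2(M_\frak{q},X)=0$ (as in the proof of Lemma~\ref{lem:extendp1toM}) and says nothing about $H^0$. (To see the implication is false in general, take $X$ with trivial $G_{M_\frak{q}}$-action and $\pmb{\mu}_p\not\subset M_\frak{q}$: then $\textup{Hom}_{G_{M_\frak{q}}}(X/\varpi X,\pmb{\mu}_p)=0$ while $(X/\varpi X)^{G_{M_\frak{q}}}\neq 0$.) What you actually need is the \emph{second} vanishing furnished by Lemma~\ref{lem:extendp1toM}, namely $H^2(M_\frak{q},\textup{Hom}_\ooo(X,\ooo(1)))=0$. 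Since $\textup{Hom}_\ooo(X,\ooo(1))^*\cong X\otimes_\ooo\frak{F}/\ooo$ and $(X\otimes_\ooo\frak{F}/\ooo)[\varpi]\cong X/\varpi X$ (as $X$ is $\ooo$-free), local duality turns this second vanishing into $H^0(M_\frak{q},X/\varpi X)=0$, which is precisely what yields both $H^0(M_\frak{q},X)=0$ and $H^1(M_\frak{q},X)[\varpi]=0$, i.e., the torsion-freeness you need for the freeness conclusion. With this substitution the rest of your argument goes through.
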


\begin{proof}
 Let $\frak{P}$ be any prime of $M$ above $p$. By \cite[Prop. 4.2.9]{nekovar06}, the cohomology
$H^{\bullet}(M_\frak{P},X)$ is represented by a perfect complex of
$\ooo$-modules (i.e., projective (hence free) $\ooo$-modules of finite
type) concentrated in degrees $0,1$ and $2$. In particular, since we
assume that $H^{2}(M_\frak{P},X)=0$, then this complex may be taken in
degrees $0$ and $1$. Similarly, the local cohomology
$H^{\bullet}\left(M_\frak{P},\textup{Hom}_{\ooo}(X,\ooo(1))\right)$ is represented by a
perfect complex of $\ooo$-modules concentrated in degrees $0$ and $1$.
The two complexes
$H^{\bullet}(M_\frak{P},X)$ and
$H^{\bullet}\left(M_\frak{P},\textup{Hom}_{\ooo}(X,\ooo(1))\right)$ are related by the
duality functor $\textup{RHom}_{\ooo}(-,\ooo)[-2]$ (c.f., \cite[Prop.
5.2.4]{nekovar06}). As a result, each of these two complexes is also
represented by a perfect complex concentrated in degrees $2-1=1$ and
$2-0=2$, hence by a single projective (hence free) $\ooo$-module of
finite type in degree $1$. This shows that both $H^1_{+}(M_{p},X)$ and $H^1_{-}(M_{p},X)$ are free $\oo$-modules of finite type.

Let $M^{\textup{cyc}}/M$ be the cyclotomic $\ZZ_p$-extension and let $\LL^{\textup{cyc}}_M=\ooo[[\Gamma_M^{\textup{cyc}}]]$ be the completed group ring of its Galois group $\Gamma_M^{\textup{cyc}}=\textup{Gal}(M^{\textup{cyc}}/M)$.  Let $\gamma_M$ be a topological generator of $\Gamma_M^{\textup{cyc}}$. Since the cohomological dimension of $G_{M_\frak{P}}$ is 2 and since we assumed \textbf{(H.p1)}, it follows that
$$H^2(M_\frak{P},X\otimes\LL_M^{\textup{cyc}})/(\gamma_M-1)\cong H^2(M_\frak{P},X)=0,$$
and hence by Nakayama's lemma that
\be\label{eqn:h2vanishforlamdacyc}H^2(M_\frak{P},X\otimes\LL_M^{\textup{cyc}})=0.\ee
Furthermore, as the ring $\LL_M^{\textup{cyc}}$ is Gorenstein, Nekov\'a\v{r}'s machinery applies as above \emph{verbatim} for the $\LL_M^{\textup{cyc}}$-module $X\otimes\LL_M^{\textup{cyc}}$ to conclude that the $\LL^{\textup{cyc}}_M$-module $H^1(M_{\frak{P}},X\otimes\LL^{\textup{cyc}}_M)$ is free of finite rank.  Its rank equals $[M_\frak{P}:\QQ_p]\cdot d$ by \cite[Theorem A.8(ii)]{kbbiwasawa}. The natural map
$$H^1(M_\frak{P},X\otimes\LL_M^{\textup{cyc}}) \lra H^1(M_\frak{P},X)$$
is surjective by (\ref{eqn:h2vanishforlamdacyc}). We conclude therefore that $ H^1(M_\frak{P},X)$ is a free $\ooo$-module of rank $[M_{\frak{P}}:\QQ_p]\cdot d$. This completes the proof.
\end{proof}
\begin{lemma}
\label{lem:corestrictionsurjective}
If \textup{\textbf{(H.p1)}} holds true then the corestriction map
$$\textup{cor}: H^1_\pm(M_p,X)\lra H^1_\pm(F_p,X)$$
is surjective.
\end{lemma}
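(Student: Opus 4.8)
The plan is to deduce surjectivity of corestriction from the vanishing of the appropriate $H^2$, which is exactly what Lemma \ref{lem:extendp1toM} and its proof give us. First I would reduce to a single prime: by Definition \ref{def:pmsemilocal}, the semi-local cohomology $H^1_\pm(M_p,X)$ is a direct sum over primes $\frak{q}$ of $M$ above $p$ (lying in $\Sigma_p$ resp.\ $\Sigma_p^c$), and corestriction respects this decomposition, grouping the summands over $M$ according to the prime $\wp$ of $F$ they lie above. So it suffices to show that for each prime $\frak{P}$ of $M$ above a prime $\wp$ of $F$ (above $p$), the local corestriction map $\textup{cor}: H^1(M_\frak{P},X)\to H^1(F_\wp,X)$ is surjective.

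Next I would invoke the exact sequence relating corestriction to the cohomology of the local Galois group $D_\frak{P}=\textup{Gal}(M_\frak{P}/F_\wp)$. Since $M/F$ is a member of $\frak{E}$, the extension $M_\frak{P}/F_\wp$ is Galois (a $p$-extension), and the Hochschild–Serre spectral sequence $H^i(D_\frak{P},H^j(M_\frak{P},X))\Rightarrow H^{i+j}(F_\wp,X)$ yields the standard five-term exact sequence. The obstruction to surjectivity of $\textup{cor}: H^1(M_\frak{P},X)_{D_\frak{P}}\to H^1(F_\wp,X)$ (and hence of $\textup{cor}$ itself, since the natural map factors through coinvariants) sits inside $H^2(D_\frak{P},H^0(M_\frak{P},X))$ together with the edge map into $H^2(F_\wp,X)$. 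But in the course of proving Lemma \ref{lem:extendp1toM} we established that $H^0(M_\frak{P},X^*)=0$, equivalently $H^2(M_\frak{P},X)=0$ by local duality, and the same argument shows $H^0(M_\frak{P},X)=0$: indeed $\# H^0(M_\frak{P},X[\varpi]) = \# H^0(D_\frak{P},H^0(M_\frak{P},X[\varpi])) \equiv \# H^0(F_\wp,X[\varpi]) \bmod p$, and the latter vanishes mod $p$ because $H^2(F_\wp,X^*)=0$ forces $H^0(F_\wp,X)=0$ via \textbf{(H.p1)} and duality. Hence $H^i(D_\frak{P},H^0(M_\frak{P},X))=0$ for all $i$, the spectral sequence degenerates enough that the edge map $H^1(F_\wp,X)\to H^1(M_\frak{P},X)^{D_\frak{P}}$ (restriction) and its partner corestriction are controlled entirely by $H^2(M_\frak{P},X)=0$, and surjectivity of corestriction follows.

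Alternatively — and this is perhaps cleaner, so I would present it this way — one uses the fact that $H^2(M_\frak{P},X)=0$ directly: corestriction $H^\bullet(M_\frak{P},X)\to H^\bullet(F_\wp,X)$ composed with restriction the other way is multiplication by $[M_\frak{P}:F_\wp]$, but a slicker route is to note that $\textup{cor}$ on $H^1$ is surjective whenever $H^2(D_\frak{P}, H^0(M_\frak{P},X))=0$ and $H^2(M_\frak{P},X)^{D_\frak{P}}$ maps onto... Actually the most transparent argument: the corestriction map fits in the exact triangle coming from the perfect complex computing $H^\bullet(F_\wp, X)$ as $R\Gamma(D_\frak{P}, R\Gamma(M_\frak{P},X))$; since $R\Gamma(M_\frak{P},X)$ is concentrated in degree $1$ (its $H^0$ and $H^2$ both vanish, as just shown), $R\Gamma(F_\wp,X) = R\Gamma(D_\frak{P}, H^1(M_\frak{P},X)[-1])$, whence $H^1(F_\wp,X) = H^0(D_\frak{P},H^1(M_\frak{P},X)) = H^1(M_\frak{P},X)^{D_\frak{P}}$, and corestriction is the trace/norm map onto the coinvariants composed with the canonical identification — it is surjective because, $D_\frak{P}$ being a $p$-group acting on a free $\ooo$-module, the relevant map is onto after the identification $H^1(F_\wp,X)\cong H^1(M_\frak{P},X)^{D_\frak{P}}$ is used in tandem with $H^2(D_\frak{P},-)=0$ on the relevant coefficients, which again reduces to the vanishing just proved. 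The main obstacle is bookkeeping: making sure the spectral-sequence edge maps are correctly identified with restriction and corestriction and that the vanishing of $H^0(M_\frak{P},X)$ and $H^2(M_\frak{P},X)$ is genuinely enough to kill all higher $D_\frak{P}$-cohomology obstructing surjectivity; once $R\Gamma(M_\frak{P},X)$ is concentrated in a single degree this becomes formal.
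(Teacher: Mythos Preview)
Your approach via Hochschild--Serre is a genuine alternative to the paper's, but the execution has a gap at the decisive step.

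The paper does not use the spectral sequence at all. It invokes semi-local Shapiro's lemma to identify $H^1_\pm(M_p,X)\cong H^1_\pm(F_p,X_M)$ with $X_M=\textup{Ind}_{M/F}X$, so that corestriction is induced by the augmentation sequence $0\to\frak{A}_M X_M\to X_M\to X\to 0$; its cokernel then sits in $H^2_\pm(F_p,\frak{A}_M X_M)$, which the paper kills by local duality and an injection $H^0(F_\wp,(\frak{A}_M X_M)^*)\hookrightarrow H^0(F_\wp,X_M^*)$ (quoted from \cite{kbbesrankr}), reducing to Lemma~\ref{lem:extendp1toM}. Your route trades this Shapiro/augmentation bookkeeping for group-cohomological input about $D_\frak{P}$.

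The gap in your write-up is this: corestriction is \emph{not} an edge map of the Hochschild--Serre spectral sequence, so the degeneration you establish (from $H^0(M_\frak{P},X)=H^2(M_\frak{P},X)=0$) only tells you that \emph{restriction} is an isomorphism $H^1(F_\wp,X)\xrightarrow{\sim}H^1(M_\frak{P},X)^{D_\frak{P}}$. Under that identification, $\textup{res}\circ\textup{cor}=N_{D_\frak{P}}$ identifies corestriction with the norm map into the \emph{invariants} (not coinvariants), and its surjectivity is precisely $\hat{H}^0(D_\frak{P},H^1(M_\frak{P},X))=0$. This is not ``formal'': the hypothesis ``$D_\frak{P}$ is a $p$-group acting on a free $\ooo$-module'' is by itself vacuous (trivial action is a counterexample), and the single vanishing $H^2(D_\frak{P},-)=0$ you cite does not control $\hat{H}^0$ when $D_\frak{P}$ is non-cyclic --- which it can be here, since the decomposition group inside a $\ZZ_p^{g+1+\delta}$-extension need not be procyclic. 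What you need is the cohomological-triviality criterion for $p$-groups: two consecutive Tate groups vanish $\Rightarrow$ all do. From the degenerated spectral sequence you have $H^1(D_\frak{P},H^1(M_\frak{P},X))=H^2(F_\wp,X)=0$ and $H^2(D_\frak{P},H^1(M_\frak{P},X))=H^3(F_\wp,X)=0$, hence $H^1(M_\frak{P},X)$ is cohomologically trivial, hence $\hat{H}^0=0$ and the norm is onto. State and invoke that criterion explicitly and your argument is complete.
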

\begin{proof}
Define $X_M=\textup{Ind}_{M/F} X$ and let $\frak{A}_M$ be the augmentation ideal of the local ring $\ooo[\Delta_M]$. By the semi-local Shapiro's Lemma \cite[\S A.5]{r00}, there is a canonical isomorphism
$$H^1_\pm(M_p,X) \cong H^1_\pm(F_p,X_M)$$
and the map
$$\textup{cor}_{M/F}: H^1_\pm(F_p,X_M)\cong H^1_\pm(M_p,X) \lra H^1_\pm(F_p,X)$$
is induced from the augmentation sequence
$$0\lra \frak{A}_M\cdot X_M \lra X_M \lra X \lra 0.$$
The cokernel of $\textup{cor}_{M/F}$ is therefore contained in
$H^2_\pm(F_p,\frak{A}_M\cdot X_M)$. To conclude the proof, it therefore suffices to check that $H^2(F_\wp,\frak{A}_M\cdot X_M)=0$ for every prime $\wp$ of $F$ lying above $p$. This is equivalent by local duality to checking that $H^0(F_\wp,(\frak{A}_M\cdot X_M)^*)=0$. By the Claim on Page 1303 of \cite{kbbesrankr}, it follows that $H^0(F_\wp,(\frak{A}_M\cdot X_M)^*)\hookrightarrow H^0(F_\wp,X_M^*)$, hence the proof reduces to verify that $H^0(F_\wp,X_M^*)=0$, which again by local duality is equivalent to the vanishing of $H^2(F_\wp,X_M)\cong H^2(M_\frak{P},X)$, and this is the conclusion of Lemma~\ref{lem:extendp1toM}.

\end{proof}
\begin{prop}
\label{prop:localrank}
For $M$ and $X$ as above then the $\LL_M$-modules
$H^1_{\pm}(M_{p},X)$ are both free of rank $g\cdot d$.
\end{prop}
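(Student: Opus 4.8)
\emph{Proof strategy.} The plan is to descend the statement to the base ring $\LL_M$ via the semi-local Shapiro isomorphism, and then to rerun over $\LL_M$ the argument with perfect complexes and local duality that the proof of the preceding proposition (the $\ooo$-module version) carried out over $\ooo$ and over $\LL_M^{\textup{cyc}}$. Concretely, set $X_M=\textup{Ind}_{M/F}X$, a free $\LL_M$-module of rank $d$. Exactly as at the start of the proof of Lemma~\ref{lem:corestrictionsurjective}, the semi-local Shapiro isomorphism of \cite[\S A.5]{r00} gives a canonical isomorphism of $\LL_M$-modules
$$H^1_\pm(M_p,X)\;\cong\;H^1_\pm(F_p,X_M)\;=\;\bigoplus_{i=1}^{s}H^1(F_{\wp_i},X_M)$$
(with the primes $\wp_i^c$ for the minus part). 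Since $M$ belongs to $\frak{E}$, the group $\Delta_M$ is a finite abelian $p$-group, so $\LL_M=\ooo[\Delta_M]$ is a complete Noetherian local ring with finite residue field $\ooo/\varpi$ and is a complete intersection over $\ooo$ (one variable and one monic relation per cyclic factor of $\Delta_M$), in particular Gorenstein. It therefore suffices to prove that each $H^1(F_\wp,X_M)$ is a free $\LL_M$-module of rank $[F_\wp:\QQ_p]\cdot d$; summing over $\Sigma_p$ (resp.\ $\Sigma_p^c$) and invoking \textbf{(H.pOrd)}, which gives $\sum_{i=1}^{s}[F_{\wp_i}:\QQ_p]=g$, then yields the asserted rank $g\cdot d$. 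One may also read the rank off a posteriori: once $\LL_M$-freeness is known, the $\ooo$-rank $g\cdot d\cdot\delta_M$ of the preceding proposition divides by $\delta_M=\textup{rank}_{\ooo}\LL_M$.

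For the local assertion I would run Nekov\'a\v{r}'s machinery with the Gorenstein ring $\LL_M$ in place of $\ooo$. By \cite[Prop.~4.2.9]{nekovar06}, $H^\bullet(F_\wp,X_M)$ is represented by a perfect complex $C$ of $\LL_M$-modules in degrees $0,1,2$; since $H^2(F_\wp,X_M)=\bigoplus_{\frak{P}\mid\wp}H^2(M_\frak{P},X)=0$ by Lemma~\ref{lem:extendp1toM}, one may take $C$ in degrees $0$ and $1$. The $\LL_M$-linear dual $\textup{Hom}_{\LL_M}(X_M,\LL_M(1))$ is canonically $\textup{Ind}_{M/F}\bigl(\textup{Hom}_{\ooo}(X,\ooo(1))\bigr)$, so the same reasoning together with the second vanishing in Lemma~\ref{lem:extendp1toM} shows that $H^\bullet(F_\wp,\textup{Hom}_{\LL_M}(X_M,\LL_M(1)))$ is represented by a perfect complex in degrees $0,1$; the local duality $\textup{RHom}_{\LL_M}(-,\LL_M)[-2]$ of \cite[Prop.~5.2.4]{nekovar06} then shows that $C$ is \emph{also} represented by a perfect complex in degrees $1,2$. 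Hence $C$ is concentrated in degree $1$, so that $H^0(F_\wp,X_M)=H^2(F_\wp,X_M)=0$; moreover the representation of $C$ in degrees $0,1$ exhibits $H^1(F_\wp,X_M)$ as the cokernel of an injective map between finite free $\LL_M$-modules (hence of projective dimension $\le 1$), while the representation in degrees $1,2$ exhibits it as the kernel of a surjective map between finite free $\LL_M$-modules (hence maximal Cohen--Macaulay, by the depth lemma). The Auslander--Buchsbaum formula over the Cohen--Macaulay local ring $\LL_M$ now forces $H^1(F_\wp,X_M)$ to be free, necessarily of rank $[F_\wp:\QQ_p]\cdot d$; this last conclusion is exactly \cite[Thm.~A.8(ii)]{kbbiwasawa}, which one may simply quote. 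Summing over $i=1,\dots,s$ completes the proof.

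The step I expect to be the genuine obstacle is this last one: over the non-regular ring $\LL_M$ a finitely generated module of finite projective dimension need not be free, so the self-duality provided by \cite[Prop.~5.2.4]{nekovar06} --- combined with the vanishing of $H^2(F_\wp,-)$ for \emph{both} $X_M$ and its twisted dual, which makes $H^1(F_\wp,X_M)$ maximal Cohen--Macaulay --- is what renders Auslander--Buchsbaum applicable and yields freeness. The remaining ingredients (semi-local Shapiro, the ring-theoretic structure of $\LL_M$, and the two $H^2$-vanishings, which come from Lemma~\ref{lem:extendp1toM}) are routine.
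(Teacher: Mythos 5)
Your argument is mathematically correct, but it takes a genuinely different route from the paper's. The paper does not rerun Nekov\'a\v{r}'s perfect-complex formalism over $\LL_M$. Instead it leverages what is already established: by Lemma~\ref{lem:corestrictionsurjective} the map $H^1_\pm(M_p,X)\to H^1_\pm(F_p,X)$ is surjective, i.e.\ surjective modulo the augmentation ideal $\frak{A}_M\subset\LL_M$; Nakayama plus the $\ooo$-freeness (rank $g\cdot d$) of $H^1_\pm(F_p,X)$ then bound the number of $\LL_M$-generators by $g\cdot d$; and a direct counting argument (using $\ooo$-torsion-freeness and the already-known $\ooo$-rank $g\cdot d\cdot\delta_M$ of $H^1_\pm(M_p,X)$) shows that $g\cdot d$ generators can admit no nontrivial $\LL_M$-relation, which forces freeness. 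So the paper's proof bootstraps entirely off the $\ooo$-module computation and the corestriction lemma, and never needs to confront the non-regularity of $\LL_M$.

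Your proof, by contrast, descends via Shapiro's lemma to the base and then re-runs the homological argument directly over the coefficient ring $\LL_M$. The point you isolate as ``the genuine obstacle'' is exactly the right one: $\LL_M$ is Gorenstein but usually not regular, so finite projective dimension alone would not give freeness, and you need the second presentation of $H^1$ (as a kernel of a surjection of free modules, hence MCM) together with Auslander--Buchsbaum to close the loop. That is a correct and clean way to make Nekov\'a\v{r}'s machinery deliver freeness over a non-regular Gorenstein local ring, and it is notably self-contained: it does not require the surjectivity of corestriction at all, and needs the earlier $\ooo$-rank statement only to name the rank, which you in any case also derive independently from \cite[Thm.~A.8(ii)]{kbbiwasawa} and (\textbf{H.pOrd}). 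The trade-off is more machinery (derived duality over $\LL_M$, the identification of $\textup{Hom}_{\LL_M}(X_M,\LL_M(1))$ with the induced dual, the commutative-algebra input), whereas the paper's argument is elementary once the $\ooo$-version and the corestriction lemma are in hand. Both are valid; yours generalizes more readily to other Gorenstein coefficient rings, while the paper's is shorter given what has already been proved.
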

\begin{proof}
By Lemma~\ref{lem:corestrictionsurjective}, the map $H^1_\pm(M_p,X)\ra H^1_\pm(F_p,X)$ (which may be thought of as reduction modulo the augmentation ideal $\frak{A}_M \subset \ooo[\Delta_M]$) is surjective. Nakayama's Lemma and Lemma~\ref{prop:localrank} therefore imply that $H^1_\pm(M_p,X)$ is generated by (at most) $g\cdot d$ elements over the ring $\ooo[\Delta_M]$. Let $\frak{B}=\{x_1, x_2,\dots,x_{g\cdot d}\}$ be any set of such generators. To prove (i), it suffices to check that the $x_i$'s do not admit any non-trivial $\ooo[\Delta_M]$-linear relation. Assume contrary, and suppose there is a non-trivial relation
 \be\label{eqn:relation}
 \sum_{i=1}^{g\cdot d}\alpha_i x_i=0, \,\,\, \alpha_i \in \ooo[\Delta_M].
 \ee
 Write $S=\{\delta x_j: \delta \in \Delta_M, 1\leq j\leq g\cdot d\},$ note that by our assumption on the set  $\frak{B}$, the set $S$ generates $H^1_\pm(M_p,X)$ as an $\ooo$-module, and $|S|=g\cdot d\cdot\delta_M=\textup{rank}_{\ooo} \, H^1_\pm(M_p,X)$.  Equation (\ref{eqn:relation}) may be rewritten as
 $$\sum_{\delta,j} a_{\delta,j}\cdot \delta x_j=0$$
  with $a_{\delta,j} \in \ooo$. Since we already know that $H^1(M_p,X)$ is $\ooo$-torsion free, we may assume without loss of generality that $a_{\delta_0,j_0} \in \ooo^\times$ for some $\delta_0,j_0$. This in turn implies that
 $$\delta_0x_{j_0} \in \textup{span}_{\ooo}(S-\{\delta_0x_{j_0}\}),$$
  hence $H^1(M_p,X)$ is generated by $S-\{\delta_0x_{j_0}\}$. This, however, is a contradiction since we already know that the $\ooo$-rank of $H^1(M_p,T)$ is $g\cdot d\cdot\delta_M=|S|$, hence it cannot be generated by $|S|-1$ elements over $\ooo$. The proof of the Proposition is now complete.
\end{proof}

\begin{cor}
\label{cor:localfull}$\,$
\begin{itemize}
\item[(i)] The $\LL$-modules $H^1_{\pm}(F_p,X\otimes\LL)$ are both free of rank $g\cdot d$.
\item[(ii)] The $\LL_{\frak{K}}$-modules
$\displaystyle{\varprojlim_{M\in \frak{E}}H^1_{\pm}(M_{p},X)}$, where the inverse limits are with respect to corestriction maps,
are both free of rank $g\cdot d$.
\end{itemize}
\end{cor}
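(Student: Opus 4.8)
\textit{Overall strategy.} The plan is to obtain both statements from Proposition~\ref{prop:localrank} (freeness over $\LL_M$) together with Nekov\'a\v{r}'s local perfectness/duality machinery; the essential point in each case is to promote a surjection to an isomorphism after a change of rings, and this is made possible by the fact that the relevant semi-local Galois cohomology complex is a \emph{single free module concentrated in degree one}.

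\textit{Part (i).} Arguing as in the proof of Proposition~\ref{prop:localrank} — using that $\LL=\ooo[[\Gamma]]$ is built from $\ooo$ by adjoining one variable $\gamma_i-1$ (a non-zerodivisor) at a time, and iterating the one-variable Nakayama argument — one deduces from \textup{\textbf{(H.p1)}} that $H^2(F_\wp,X\otimes\LL)=0=H^2\!\left(F_\wp,\textup{Hom}_{\ooo}(X,\ooo(1))\otimes\LL\right)$ for every $\wp\mid p$. Since $\LL$ is a regular (hence Gorenstein) Noetherian local ring, Nekov\'a\v{r}'s perfectness and local duality results (\cite[Prop.~4.2.9, Prop.~5.2.4]{nekovar06}) then force $R\Gamma(F_\wp,X\otimes\LL)$ to be concentrated in degree $1$, where it is a free $\LL$-module of finite type; summing over $\wp\in\Sigma_p$ (resp.\ $\Sigma_p^c$) shows that $H^1_{+}(F_p,X\otimes\LL)$ (resp.\ $H^1_{-}(F_p,X\otimes\LL)$) is $\LL$-free. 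As this complex is a flat module placed in degree $1$, the derived base change $R\Gamma(F_p,X\otimes\LL)\otimes^{\mathbb{L}}_{\LL}\ooo\simeq R\Gamma(F_p,X)$ carries no higher $\textup{Tor}$ and gives $H^1_{\pm}(F_p,X\otimes\LL)\otimes_{\LL}\ooo\cong H^1_{\pm}(F_p,X)$, which is $\ooo$-free of rank $g\cdot d$ by Proposition~\ref{prop:localrank} with $M=F$. Hence $H^1_{\pm}(F_p,X\otimes\LL)$ is free of rank $g\cdot d$ over $\LL$, which is (i).

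\textit{Part (ii).} The collection $\frak{E}$ is directed under inclusion with least element $F$ (given $M_0F(\eta),M_0'F(\eta')\in\frak{E}$, both lie inside $(M_0M_0')F(\textup{rad}(\eta\eta'))\in\frak{E}$), so $\textup{Gal}(\frak{K}/F)=\varprojlim_{M\in\frak{E}}\textup{Gal}(M/F)$ and $\LL_{\frak{K}}=\varprojlim_{M\in\frak{E}}\LL_M$. Fix $M\supseteq M'$ in $\frak{E}$. The semi-local Shapiro isomorphism used in the proof of Lemma~\ref{lem:corestrictionsurjective} identifies the corestriction $H^1_{\pm}(M_p,X)\to H^1_{\pm}(M'_p,X)$ with the map $H^1_{\pm}(F_p,X_M)\to H^1_{\pm}(F_p,X_{M'})$ induced by the projection $X_M=\textup{Ind}_{M/F}X\twoheadrightarrow\textup{Ind}_{M'/F}X=X_{M'}$, which exhibits $X_{M'}$ as $X_M\otimes_{\LL_M}\LL_{M'}$. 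Now $R\Gamma(F_p,X_M)$ is a perfect complex of $\LL_M$-modules (Nekov\'a\v{r}, $\LL_M$ being Gorenstein) with $H^0=0$ (as in the proof of Proposition~\ref{prop:localrank}) and $H^2=0$ (by Lemma~\ref{lem:extendp1toM}), hence is a free $\LL_M$-module placed in degree $1$; since moreover $X_M$ is $\LL_M$-flat, the derived base change $R\Gamma(F_p,X_M)\otimes^{\mathbb{L}}_{\LL_M}\LL_{M'}\simeq R\Gamma(F_p,X_{M'})$ shows that the transition map above is precisely the base-change map $H^1_{\pm}(M_p,X)\otimes_{\LL_M}\LL_{M'}\to H^1_{\pm}(M'_p,X)$, an isomorphism (which in particular re-proves the surjectivity recorded in Lemma~\ref{lem:corestrictionsurjective}). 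It then suffices to invoke the elementary fact that an inverse limit $\varprojlim_iP_i$ over a directed index set with least element $i_0$, in which each $P_i$ is finite free of constant rank $n$ over a local ring $R_i$, the transition ring maps $R_i\twoheadrightarrow R_j$ are surjective, and each base-change map $P_i\otimes_{R_i}R_j\to P_j$ is an isomorphism, is finite free of rank $n$ over $R=\varprojlim_iR_i$: one lifts an $R_{i_0}$-basis of $P_{i_0}$ to $\varprojlim_iP_i$ (possible since the transition maps are then surjective), checks by Nakayama that the lifted $n$-tuple is an $R_i$-basis of every $P_i$, and passes to the limit to get $\varprojlim_iP_i\cong\varprojlim_iR_i^{n}=R^{n}$. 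Applying this with $R_i=\LL_M$, $P_i=H^1_{\pm}(M_p,X)$, $n=g\cdot d$ and $R=\LL_{\frak{K}}$ gives (ii).

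\textit{On the main obstacle.} The one genuinely delicate point, common to both parts, is confirming that the structural maps of the inverse systems — reduction modulo the augmentation ideal in (i), corestriction in (ii) — are honestly the base-change maps of free modules, so that surjectivity upgrades automatically to bijectivity after extension of scalars. This hinges entirely on knowing that the local complex $R\Gamma(F_p,X_M)$ is a single free module concentrated in degree $1$, which is exactly the output of the Nekov\'a\v{r}-theoretic argument underlying Proposition~\ref{prop:localrank}; granting that, everything else is Nakayama-type bookkeeping over the (pseudo-compact) local rings $\LL$, $\LL_M$ and $\LL_{\frak{K}}$, and requires no further input.
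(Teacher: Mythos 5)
Your proof is correct and is, in spirit, exactly what lies behind the paper's one-line ``Immediate after Proposition~\ref{prop:localrank}.'': one must check that the $\LL$-module $H^1_{\pm}(F_p,X\otimes\LL)$, respectively the inverse limit over $\frak{E}$, is free of the stated rank, and in both cases the essential inputs are the freeness over $\LL_M$ from Proposition~\ref{prop:localrank}, the surjectivity of corestriction from Lemma~\ref{lem:corestrictionsurjective}, and a Nakayama-type lifting argument. Where you differ in flavor is that you re-run Nekov\'a\v{r}'s perfectness argument directly over $\LL$ (for (i)) and over $\LL_M$ (for (ii)) to obtain that the semi-local complex is a single free module in degree $1$, and then invoke derived base change to identify transition maps with base-change maps; the paper's internal logic is more elementary, having already established freeness over $\LL_M$ in Proposition~\ref{prop:localrank} by a rank count over $\ooo$ rather than by a second pass through Nekov\'a\v{r}'s machinery, after which taking limits along the surjective corestriction maps is routine. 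Both routes are valid; yours is more systematic and makes the identification of the transition maps explicit, which is the point you rightly flag as the one non-trivial step. A small caveat worth noting: when you ``lift an $\ooo$-basis of $P_{i_0}$ to $\varprojlim_i P_i$,'' the existence of a compatible lift through the surjective system uses that the index poset $\frak{E}$ is countable and cofiltered (equivalently, pseudo-compactness of the $\LL_M$), which is true here but should be said; otherwise the Nakayama bookkeeping is exactly right.
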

\begin{proof}
Immediate after Proposition~\ref{prop:localrank}.
\end{proof}
When $\chi=\omega_\psi$, observe that $T=T_\chi\otimes \langle \psi\rangle^{-1}$. The hypothesis $\textbf{(H.p1)}$ is verified for $X=T$ and  $X=T_\chi$, since we assumed 
(\ref{eqn:chiisnotteich}) and (\ref{eqn:assnotrivxhi}). In particular, the conclusions of Corollary~\ref{cor:localfull} hold true both choices of $G_F$-representations.
\begin{define}
\label{def:modifiedlocalcondition}
\begin{enumerate}
\item Let $\frak{L}$ be any free rank one $\LL_{\frak{K}}$-direct summand of $\displaystyle{\varprojlim_{M\in \frak{E}}H^1_{+}(M_{p},T_\chi)}$.
\item For $M=M_0\cdot F(\eta) \in \frak{E}$ with $F\subset M_0\subset F_\infty$,  let $\al_M \subset H^1_{+}(M_{p},T_\chi)$ be the image of $\frak{L}$ under the surjection
$$\varprojlim_{N}H^1(N_p,T_\chi)\ra H^1(M_{p},T_\chi).$$
We write $\al$ instead of $\al_F$.
\item Let $\al_\infty$ be the image of $\frak{L}$ under the surjection
$$\varprojlim_{N}H^1(N_p,T_\chi)\ra H^1(F_{p},T_\chi\otimes\LL).$$
\end{enumerate}
\end{define}
\begin{define}
\begin{itemize}
\item[(1)] The submodule
$$H^1_{\FF_{\al_\infty}}(F_p,T_\chi\otimes\LL)=H^1_{-}(F_p,T_\chi\otimes\LL)\oplus \al_\infty\subset H^1(F_p,T_\chi\otimes\LL)$$
is called the \emph{$\al_\infty$-modified local condition} on $T_\chi\otimes\LL$.
\item[(2)] Similarly, the submodule
$$H^1_{\FF_{\al}}(F_p,T_\chi)=H^1_{-}(F_p,T_\chi)\oplus \al\subset H^1(F_p,T_\chi)$$
is called the \emph{$\al$-modified local condition} on $T$.
\end{itemize}
\end{define}

\subsection{Selmer structures}
\label{subsec:selmerstr}
The notation that we have set above is in effect.

We first recall Mazur and Rubin's definition of a \emph{Selmer structure}, in particular the \emph{canonical Selmer structure} on $T_\chi$ and $T_\chi\otimes\LL$ (or on their various twists).

 Let $R$ be a complete local noetherian $\ooo$-algebra, and let $X$ be a $R[[G_F]]$-module which is free of finite rank over $R$. In this paper, we will be interested in the case when $R=\LL$ or its certain quotients, and $X$ is $T\otimes\LL$ or its relevant quotients by an ideal of $\LL$. (For example, taking the quotient by the augmentation ideal of $\LL$ will give us $\ooo$ and the representation $T$.)

 \begin{define}
\label{selmer structure}
A \emph{Selmer structure} $\FF$ on $X$ is a collection of the following data:
\begin{itemize}
\item a finite set $\Sigma(\FF)$ of places of $F$, including all infinite places and primes above $p$, and all primes where $X$ is ramified.
\item for every $\lambda \in \Sigma(\FF)$ a local condition on $X$ (which we view now as a $R[[\mathcal{D}_{\lambda}]]$-module), i.e., a choice of $R$-submodule
$$H^1_{\FF}(F_{\lambda}, X) \subset H^1(F_{\lambda}, X).$$
 \end{itemize}
If $\lambda \notin \Sigma(\FF)$ we will also write
$H^1_{\FF}(F_{\lambda}, X)=H^1_{f}(F_{\lambda}, X)$, where the module
$H^1_{{f}}(F_{\lambda}, X)$ is the \emph{finite} part of
$H^1(F_{\lambda}, X)$, defined as in~\cite[Definition 1.1.6]{mr02}.
\end{define}

\begin{define}
 The \emph{semi-local cohomology group} at a rational
prime $\ell$ is defined by setting
$$H^i(F_\ell, X):=\bigoplus_{\lambda|\ell} H^i(F_\lambda, X).$$
\end{define}
 Let $\lambda$ be a prime of $F$. There is the perfect local
Tate pairing
$$<\,,\,>_\lambda\,:H^1(F_\lambda,X) \times
H^1(F_\lambda,X^*) \lra H^2(F_\lambda,\frak{F}/\ooo(1))
\stackrel{\sim}{\lra}\frak{F}/\ooo,$$
where we recall that $X^*:=\textup{Hom}(X, \pmb{\mu}_{p^{\infty}})$ is the Cartier dual of $X$. For a Selmer structure $\FF$ on $X$, define $H^1_{\FF^*}(F_\lambda,X^*):=H^1_\FF(F_\lambda,X)^\perp$ as
the orthogonal complement of $H^1_\FF(F_\lambda,X)$ with respect to
the local Tate pairing. The Selmer structure $\FF^*$ on $X^*$ (with
$\Sigma(\FF)=\Sigma(\FF^*)$) defined in this way will be called the
\emph{dual Selmer structure}.

For examples of local conditions see~\cite[Definitions 1.1.6 and 3.2.1]{mr02}.
\begin{define}
\label{selmer group}
If $\FF$ is a Selmer structure on $X$, we define the \emph{Selmer module} $H^1_{\FF}(F,X)$ as
 $$H^1_{\FF}(F,X):=\ker\left(H^1(\textup{Gal}(F_{\Sigma(\FF)}/F),X) \lra \bigoplus_{\lambda \in \Sigma(\FF)}H^1(F_{\lambda},X)/H^1_{\FF}(F_{\lambda},X)\right),$$
 where $F_{\Sigma(\FF)}$ is the maximal extension of $F$ which is unramified outside $\Sigma(\FF)$. We also define the dual Selmer structure in a similar way; just replace $X$ by $X^*$ and $\FF$ by $\FF^*$ above.
\end{define}

\begin{example}
\label{example:canonical selmer}
In this example we recall~\cite[Definitions 3.2.1 and  5.3.2]{mr02} of which we make frequent use.
\begin{itemize}
\item[(i)] Let $R=\ooo$ and let ${X}$ be a free $R$-module endowed with a continuous action of $G_F$, which is unramified outside a finite set of places of $F$.  We define a Selmer structure $\FFc$ on ${X}$ by setting $\Sigma(\FFc)=\{\lambda: X \hbox{ is ramified at } \lambda\}\cup\{\wp|p\}\cup\{v|\infty\}$, and
\begin{itemize}
\item if $\lambda \in \Sigma(\FFc)$, $\lambda\nmid p\infty$, we define the local condition at $\lambda$ to be
$$H^1_{\FFc}(F_\lambda, {X})=\ker(H^1(F_\lambda, {X}) \lra H^1(F_\lambda^{\textup{unr}},X\otimes \frak{F})),$$
where $F_\lambda^{\textup{unr}}$ is the maximal unramified extension of $F_\lambda$,
\item if $\wp|p$, we define the local condition at $\wp$ to be
$$H^1_{\FFc}(F_\wp, {X})=H^1(F_\wp,X).$$
\end{itemize}
The Selmer structure $\FFc$ is called the \emph{canonical Selmer structure} on ${X}$.
\item[(ii)] Let now $R$ be the Iwasawa algebra $\LL$ or its cyclotomic quotient $\LL^{\textup{cyc}}$, and let $\mathbb{X}$ be a free $R$-module endowed with a continuous action of $G_F$, which is unramified outside a finite set of places of $F$.  We define a Selmer structure $\FF_R$ on $\mathbb {X}$ by setting
$$\Sigma(\FF_R)=\{\lambda: \mathbb{X} \hbox{ is ramified at } \lambda\}\cup\{\wp\subset F: \wp|p\}\cup\{v|\infty\},$$
 and $H^1_{\FF_R}(F_\lambda, \mathbb {X})=H^1(F_\lambda, \mathbb{X})$ for every $\lambda \in \Sigma(\FF_R)$. The Selmer structure $\FF_{R}$ is called the \emph{canonical  $R$-adic Selmer structure} on $\mathbb{X}$.
 \end{itemize}
We still denote the induced Selmer structure on the quotients
$\mathbb{X}/I\mathbb{X}$ by $\FF_R$, which is obtained by
\emph{propagating} $\FF_R$ on $\mathbb{X}$ (see~\cite[Example
1.1.2]{mr02}). Note for $\lambda\in \Sigma(\FF_{R})$ that
$H^1_{\FF_R}(F_{\lambda}, \mathbb{X}/I\mathbb{X})$ will not always
be the same as $H^1(F_{\lambda}, \mathbb{X}/I\mathbb{X})$. In
particular, when $I$ is the augmentation ideal of $R$, the
Selmer structure $\FF_R$ on $\mathbb{X}$ will not always propagate
to $\FFc$ on $X:=\mathbb{X}\otimes_{R}R/I.$ However, when $X=T$
and $\mathbb{X}=T\otimes_{\ooo}R$ as in~\S\ref{sec:intro}, $\FF_R$
on $\mathbb{X}$ \emph{does} propagate  to $\FFc$ on $X$, under the
hypothesis $\hne$, as we shall check below.

\end{example}

\begin{define}
\label{def:selmer triple}
A \emph{Selmer triple} is a triple $(X,\FF,\PP)$, where $\FF$ is a Selmer structure on $X$ and $\PP$ is a set non-archimedean primes of $F$ disjoint from $\Sigma(\FF)$.
\end{define}

\begin{define}
 \label{def:line over k_infty}
 \begin{itemize}
 \item[(a)] Let $\FF_-$ be the Selmer structure on $T_\chi\otimes\LL$ defined as follows:
 \begin{itemize}
 \item $\Sigma(\FF_-)=\Sigma(\FF_{\LL})$,
 \item if $\lambda \nmid p$, define $H^1_{\FF_-}(F_\lambda, T_\chi\otimes\LL)=H^1_{\FF_\LL}(F_\lambda,T_\chi\otimes\LL)$,
 \item $H^1_{\FF_-}(F_p,T_\chi\otimes\LL):=H^1_{-}(F_p,T_\chi\otimes\LL) \subset H^1(F_p,T_\chi\otimes\LL)=H^1_{\FF_\LL}(F_p,T_\chi\otimes\LL)$.
 \end{itemize}

\item[(b)] Fix a $\LL$-rank one direct summand $\al_\infty \subset H^1_{+}(F_p,T_\chi\otimes\LL)$ as in Definition~\ref{def:modifiedlocalcondition}. Define the \emph{$\al_\infty$-modified Selmer structure} $\FF_{\al_\infty}$ on $T_\chi\otimes\LL$ as follows:
 \begin{itemize}
 \item $\Sigma(\FF_{\al_\infty})=\Sigma(\FF_{\LL})$,
 \item if $\lambda \nmid p$, define $H^1_{\FF_{\al_\infty}}(F_\lambda, T_\chi\otimes\LL)=H^1_{\FF_\LL}(F_\lambda,T_\chi\otimes\LL)$,
 \item $H^1_{\FF_{\al_\infty}}(F_p,T_\chi\otimes\LL):=H^1_{-}(F_p,T_\chi\otimes\LL)\oplus \al_\infty \subset H^1_{\FF_\LL}(F_p,T_\chi\otimes\LL)$.
 \end{itemize}
 \end{itemize}
 \end{define}

\subsection{Comparing Selmer groups}
\label{subsec:compareselmer}
To ease notation, set $\TT=T_\chi\otimes\LL$.  Suppose in this section that $\chi=\omega_\psi$.
\begin{lemma}
\label{lem:twistedisom} We have the following isomorphisms of $\LL$-modules:
\be\label{eqn:globaltwisted}H^1(F,\TT) \otimes\langle\psi\rangle^{-1}\cong H^1(F,T\otimes\LL),\ee
\be\label{eqn:localtwisted}H^1(F_\ell,\TT) \otimes\langle\psi\rangle^{-1}\cong H^1(F_\ell,T\otimes\LL)\ee
for every prime $\ell$.
\end{lemma}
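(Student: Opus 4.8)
The plan is to exhibit the isomorphism as a "twisting" operation on cohomology, a standard device (see e.g. Rubin's book, \S6.2, or Mazur--Rubin) which I would carry out as follows. Recall $\TT=T_\chi\otimes\LL$ with $\chi=\omega_\psi$, and that by definition $T=\ooo(\psi^*)=\ooo(\psi^{-1})\otimes\ZZ_p(1)$ while $T_\chi=\ooo(1)\otimes\chi^{-1}=\ooo(1)\otimes\omega_\psi^{-1}$; since $\psi=\omega_\psi\cdot\langle\psi\rangle$ we get $\chi=\omega_\psi=\psi\cdot\langle\psi\rangle^{-1}$, and therefore as $G_F$-modules $T_\chi\otimes\langle\psi\rangle^{-1}\cong \ooo(1)\otimes\psi^{-1}=T$ — here $\langle\psi\rangle^{-1}$ is viewed as a character $G_F\to U^{(1)}\subset\LL^\times$ after composing with $\Gamma_\psi\hookrightarrow\Gamma$, which makes sense precisely because $\langle\psi\rangle$ factors through $\Gamma$. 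First I would make the twisting map explicit: let $t_\psi\colon\LL\to\LL$ be the $\ooo$-algebra automorphism sending a grouplike element $\gamma\in\Gamma$ to $\langle\psi\rangle(\gamma)^{-1}\gamma$ (well-defined since $\langle\psi\rangle(\gamma)\in1+\varpi\LL$ and $\Gamma$ topologically generates $\LL$); it is an isomorphism of rings, inverse given by $\gamma\mapsto\langle\psi\rangle(\gamma)\gamma$.

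The key step is then the observation that, as $\LL[[G_F]]$-modules, $\TT$ and $(T\otimes\LL)\otimes_{\LL,t_\psi}\LL$ are isomorphic: concretely, the map $x\otimes\lambda\mapsto x\otimes\lambda$ intertwines the $G_F$-action on $\TT$ (where $g$ acts by $\chi(g)\cdot g$ on the $\LL$-factor in the canonical way) with the $G_F$-action on $T\otimes\LL$ composed with the twist by $\langle\psi\rangle^{-1}$, because $\chi(g)\cdot g_{\LL}=\psi(g)\langle\psi\rangle(g)^{-1}\cdot g_{\LL}$ matches $\psi(g)\cdot g_{\LL}$ after applying $t_\psi$ to the image of $g$ in $\Gamma$. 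Granting this module isomorphism, one applies the functoriality of Galois cohomology: for any $G_F$-module (resp.\ $G_{F_\ell}$-module) $N$ over $\LL$, base change along the ring automorphism $t_\psi$ together with the module isomorphism above induces $H^i(F,\TT)\cong H^i(F,T\otimes\LL)\otimes_{\LL,t_\psi}\LL$, and the right-hand side is just $H^i(F,T\otimes\LL)$ regarded as an $\LL$-module via $t_\psi$, which is by definition $H^1(F,T\otimes\LL)\otimes\langle\psi\rangle^{-1}$ in the notation of the paper. The same argument applied to $G_{F_\ell}$ for each prime $\ell$ (nothing global is used) gives (\ref{eqn:localtwisted}); here I would note that both $\TT$ and $T\otimes\LL$ are unramified outside the same finite set, so the cohomology of $\Gal(F_{\Sigma}/F)$ is what gets twisted and there is no subtlety about the Galois group over which cohomology is computed.

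The only mild obstacle is bookkeeping: one must be careful that $\langle\psi\rangle^{-1}$ really does take values in $\LL^\times$ and that the twisting automorphism $t_\psi$ of $\LL$ is well-defined and continuous — this is where the hypotheses $\textup{im}(\psi)=\frak{O}^\times$, the decomposition $\frak{O}^\times=\pmb{\mu}(\frak{F}^\times)\times U^{(1)}$, and (\ref{eqn:nopthrootsinF}) enter, guaranteeing $\langle\psi\rangle(\gamma)\in1+\varpi\frak{O}$ so that its image in $\LL$ is a unit congruent to $1$ and $t_\psi$ is a genuine automorphism rather than a mere endomorphism. I expect this to be entirely routine; once $t_\psi$ is in hand, both displayed isomorphisms are a formal consequence of the identification of $G_F$-modules above and the base-change functoriality of group cohomology, so the proof is short.
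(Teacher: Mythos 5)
Your proof is correct and is essentially the same approach the paper takes: the paper simply notes that $T=T_\chi\otimes\langle\psi\rangle^{-1}$ with $\langle\psi\rangle$ a continuous character of $\Gamma$ and then cites Rubin's twisting result \cite[Proposition 6.2.1]{r00}, which is exactly the isomorphism you construct explicitly via the ring automorphism $t_\psi$ and base change. (One small slip: in your parenthetical description of the $G_F$-action on $\TT$, the $T_\chi$-factor contributes $\chi(g)^{-1}\chi_{\textup{cyc}}(g)$ rather than $\chi(g)$, but this does not affect the argument.)
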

\begin{proof}
As we have remarked above, $T_\chi=T\otimes \langle\psi\rangle^{-1}$. As $\langle\psi\rangle^{-1}$ is a continuous character of $\Gamma$, the proof of the Lemma follows from \cite[Proposition 6.2.1]{r00}.

\end{proof}
\begin{define}
\label{def:FminusonTchi}	
Let $H^1_{\FF_-}(F_p,T\otimes\LL) \subset H^1(F_p,T\otimes\LL)$ (resp., $\al_\infty^\psi$) be the isomorphic image of $H^1_{\FF_-}(F_p,\TT)$ (resp., $\al_\infty$) under the isomorphism (\ref{eqn:localtwisted}) above. For any subquotient $X$ of $T\otimes\LL$, let $H^1_{\FF_-}(F_\ell,X)\subset H^1(F_\ell,X)$ denote the propagated local condition on $X$, in the sense of \cite[Example 1.1.2]{mr02}.
\end{define}

\begin{lemma}
\label{lem:twistedglobalisom} We have the following isomorphism of Selmer groups:
$$H^1_{\FF_-}(F,\TT) \otimes\langle\psi\rangle^{-1} \cong H^1_{\FF_-}(F,T\otimes\LL).$$
\end{lemma}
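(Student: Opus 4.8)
The plan is to deduce Lemma~\ref{lem:twistedglobalisom} from Lemma~\ref{lem:twistedisom} by checking that the twisting isomorphisms respect the local conditions defining $\FF_-$ at every place in $\Sigma(\FF_-)=\Sigma(\FF_\LL)$. Recall that $H^1_{\FF_-}(F,\TT)$ is by Definition~\ref{selmer group} the kernel of the restriction map
$$H^1(\Gal(F_{\Sigma(\FF_-)}/F),\TT)\lra\bigoplus_{\lambda\in\Sigma(\FF_-)}\frac{H^1(F_\lambda,\TT)}{H^1_{\FF_-}(F_\lambda,\TT)},$$
and similarly for $T\otimes\LL$. Since $\langle\psi\rangle^{-1}$ is an unramified-outside-$p$ character of $\Gamma$ factoring through $F_\psi/F\subset F_\infty/F$, the set of ramification places of $\TT$ and of $T\otimes\LL$ coincide, so $\Sigma(\FF_\LL)$ is the same for both and the ambient global cohomology groups are identified by \eqref{eqn:globaltwisted}, compatibly with restriction to each $\lambda$ via \eqref{eqn:localtwisted}. (Both \eqref{eqn:globaltwisted} and \eqref{eqn:localtwisted} come from \cite[Proposition 6.2.1]{r00}, which gives these identifications as a single natural transformation of functors, so their compatibility with restriction is automatic.)

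Next I would verify the identification of the local conditions term by term. For $\lambda\nmid p$, the local condition on $\TT$ is the full group $H^1(F_\lambda,\TT)$ (this is the canonical $\LL$-adic Selmer structure $\FF_\LL$, see Example~\ref{example:canonical selmer}(ii) and Definition~\ref{def:line over k_infty}(a)), and likewise on $T\otimes\LL$; so \eqref{eqn:localtwisted} trivially matches these. For $\lambda\mid p$, by Definition~\ref{def:line over k_infty}(a) we have $H^1_{\FF_-}(F_p,\TT)=H^1_-(F_p,\TT)=\bigoplus_{i}\bigoplus_{\frak q\mid\wp_i^c}H^1(F_{\frak q},\TT)$, which is again the full local cohomology at each prime above $p$ in $\Sigma_p^c$; and $H^1_{\FF_-}(F_p,T\otimes\LL)$ was defined in Definition~\ref{def:FminusonTchi} precisely as the isomorphic image of $H^1_{\FF_-}(F_p,\TT)$ under \eqref{eqn:localtwisted}. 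Hence at every place the twisting isomorphism carries $H^1_{\FF_-}(F_\lambda,\TT)$ onto $H^1_{\FF_-}(F_\lambda,T\otimes\LL)$.

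Putting these together: the twisting isomorphism \eqref{eqn:globaltwisted} fits into a commutative diagram with the two defining restriction maps (using \eqref{eqn:localtwisted} on the target), and by the previous paragraph the vertical map on the target of that restriction — i.e.\ the induced map on the direct sum of quotients $\bigoplus_\lambda H^1(F_\lambda,-)/H^1_{\FF_-}(F_\lambda,-)$ — is an isomorphism. Taking kernels along both columns yields the desired isomorphism $H^1_{\FF_-}(F,\TT)\otimes\langle\psi\rangle^{-1}\cong H^1_{\FF_-}(F,T\otimes\LL)$ of $\LL$-modules, where the $\LL$-module structure on the left is twisted by $\langle\psi\rangle^{-1}$ exactly as in Lemma~\ref{lem:twistedisom}. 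I do not anticipate a serious obstacle here; the only point requiring a modicum of care is confirming that \cite[Proposition 6.2.1]{r00} provides the twisting as a genuine natural isomorphism of cohomology functors (so that global and local versions are compatible under restriction), which is precisely what that reference furnishes. A one-line remark to that effect suffices.
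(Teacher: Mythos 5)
Your proof is correct and unpacks exactly what the paper leaves implicit: the paper's proof is simply ``Immediate from Lemma~\ref{lem:twistedisom} and the definition of the local condition $\FF_{-}$ on $\TT$ and on its twist $T\otimes\LL$,'' and your check—that the twisting isomorphism of \cite[Proposition 6.2.1]{r00} is compatible with restriction and carries each local condition $H^1_{\FF_-}(F_\lambda,\TT)$ onto $H^1_{\FF_-}(F_\lambda,T\otimes\LL)$ (trivially away from $p$, and by Definition~\ref{def:FminusonTchi} at $p$)—is precisely the content of ``immediate'' here. No gaps.
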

\begin{proof}
Immediate from Lemma~\ref{lem:twistedisom} and the definition of the local condition $\FF_{-}$ on $\TT$ and on its twist $T\otimes\LL$.
\end{proof}
\begin{lemma}
\label{lem:structure for units}
The $\ooo$-module $\oo_{L_\chi}^{\times,\chi}$ is free of rank $g$.
\end{lemma}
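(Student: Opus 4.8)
The idea is to compute the $\chi$-isotypic piece by combining the semisimplicity of $\frak{F}[\Delta]$ with Minkowski's (Herbrand's) description of the global units as a Galois module. Recall that $\Delta=\Gal(L_\chi/F)$ is cyclic (being the image of $\chi$ in $\ooo^\times$) of order prime to $p$, and that $\ooo$ contains all values of $\chi$; hence $e_\chi=\frac{1}{|\Delta|}\sum_{\delta\in\Delta}\chi(\delta)^{-1}\delta$ is a genuine idempotent of $\ooo[\Delta]$ and the functor $M\mapsto e_\chi M=M^\chi$ is exact on $\ooo[\Delta]$-modules. Applying it to the exact sequence of $\ooo[\Delta]$-modules
$$0\lra \pmb{\mu}\big(\widehat{\oo_{L_\chi}^\times}\big)\otimes_{\ZZ_p}\ooo\lra \widehat{\oo_{L_\chi}^\times}\otimes_{\ZZ_p}\ooo\lra \Big(\widehat{\oo_{L_\chi}^\times}\big/\pmb{\mu}\big(\widehat{\oo_{L_\chi}^\times}\big)\Big)\otimes_{\ZZ_p}\ooo\lra 0,$$
in which the last term is free over $\ooo$, one obtains $\oo_{L_\chi}^{\times,\chi}\cong e_\chi\big(\big(\widehat{\oo_{L_\chi}^\times}/\pmb{\mu}(\widehat{\oo_{L_\chi}^\times})\big)\otimes_{\ZZ_p}\ooo\big)$ provided the $\chi$-component of the torsion term vanishes. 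To see the latter, note $\pmb{\mu}(\widehat{\oo_{L_\chi}^\times})=\pmb{\mu}_{p^\infty}\cap L_\chi$; since $p$ is unramified in $F/\QQ$ we have $\pmb{\mu}_p\not\subset F$, so either $\pmb{\mu}_p\not\subset L_\chi$ and the term is already $0$, or $\pmb{\mu}_p\subset L_\chi$, in which case $\Delta$ acts on it through $\omega\big|_\Delta$, a nontrivial character of $\Delta$; as $\omega$ then factors through $\Delta$, the hypothesis (\ref{eqn:chiisnotteich1}) forces $\chi\big|_\Delta\neq\omega\big|_\Delta$ and the $\chi$-part is again $0$. Hence $\oo_{L_\chi}^{\times,\chi}$ is a direct summand of a free $\ooo$-module, so is itself free of finite rank, and it remains to compute that rank.

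For the rank I would tensor with $\frak{F}$ and identify $\oo_{L_\chi}^{\times,\chi}\otimes_\ooo\frak{F}$ with $e_\chi\big(\oo_{L_\chi}^\times\otimes_\ZZ\frak{F}\big)$. By Minkowski's unit theorem for the Galois extension $L_\chi/F$ there is an isomorphism of $\QQ[\Delta]$-modules
$$\oo_{L_\chi}^\times\otimes_\ZZ\QQ\ \oplus\ \QQ_{\mathrm{triv}}\ \cong\ \QQ\big[S_\infty(L_\chi)\big],$$
where $S_\infty(L_\chi)$ is the set of archimedean places of $L_\chi$ with its $\Delta$-action. Because $F$, hence $L_\chi$, is totally complex and $[L_\chi:F]=|\Delta|$ is prime to $p$, every archimedean place of $L_\chi$ has trivial decomposition group in $\Delta$, so $\Delta$ acts freely on $S_\infty(L_\chi)$; and since $|S_\infty(L_\chi)|=[L_\chi:\QQ]/2=g\cdot|\Delta|$, the $\Delta$-set $S_\infty(L_\chi)$ is a disjoint union of $g$ copies of the regular $\Delta$-set, whence $\QQ[S_\infty(L_\chi)]\cong\QQ[\Delta]^{\oplus g}$ as $\QQ[\Delta]$-modules.

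Finally I would apply $e_\chi$ over $\frak{F}$. Since $\chi$ is a (one-dimensional) character whose values lie in $\frak{F}$, the space $e_\chi\,\frak{F}[\Delta]$ is one-dimensional over $\frak{F}$, while $e_\chi(\QQ_{\mathrm{triv}}\otimes_\QQ\frak{F})=0$ because $\chi\neq\mathbf{1}$ (a consequence of (\ref{eqn:assnotrivxhi})). Therefore
$$\oo_{L_\chi}^{\times,\chi}\otimes_\ooo\frak{F}\ \cong\ e_\chi\big(\oo_{L_\chi}^\times\otimes_\ZZ\frak{F}\big)\ \cong\ \big(e_\chi\,\frak{F}[\Delta]\big)^{\oplus g}\ \cong\ \frak{F}^{\oplus g},$$
so the free $\ooo$-module $\oo_{L_\chi}^{\times,\chi}$ has rank $g$, as asserted. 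The only step that requires genuine care is the vanishing of the $\chi$-part of the $p$-primary torsion of the units, which is precisely where the assumptions that $p$ is unramified in $F$ and that $\chi\neq\omega$ are used; everything else is the standard Herbrand–Minkowski analysis of the unit group as a Galois module.
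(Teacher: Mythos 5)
Your proof is correct and follows essentially the same route as the paper, which simply cites the Herbrand/Minkowski description of the unit group as a $\QQ[\Delta]$-module (via Neukirch \S 8.6.12) together with the hypothesis $\chi\neq\omega$; you have written out that citation in full, correctly isolating that $\chi\neq\omega$ (and $p$ unramified in $F$) handles the $p$-primary torsion and $\chi\neq\mathbf{1}$ (from (\ref{eqn:assnotrivxhi})) kills the extra trivial summand, while the total complexity of $F$ gives $\QQ[S_\infty(L_\chi)]\cong\QQ[\Delta]^{\oplus g}$.
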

\begin{proof}
This follows from \cite[\S8.6.12]{neukirch}, along with our assumption that $\chi$ is different from the Teichm\"uller character $\omega$.
\end{proof}
Consider the following hypothesis:
\be\label{eqn:LCforcedvanishing}
H^1_{\FF_-^*}(F,T_\chi^*) \hbox{ is finite.}
\ee
We will verify later (see Theorem~\ref{thm:mainconjforTchi} below) that (\ref{eqn:LCforcedvanishing}) holds true if we assume $\Sigma$-Leopoldt conjecture of Hida and Tilouine.
\begin{lemma}
\label{lem:minusvanishingforunits}
Assuming \textup{(\ref{eqn:LCforcedvanishing})}, $H^1_{\FF_-}(F,T_\chi)=0$.
\end{lemma}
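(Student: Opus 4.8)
The plan is to compare the $\FF_-$-Selmer group over $F$ with the $\FF_-$-Selmer group over the cyclotomic (or full $\ZZ_p$-power) tower and to invoke a control/duality argument together with the forced-vanishing hypothesis (\ref{eqn:LCforcedvanishing}). First I would set up the global Euler characteristic (or Greenberg-Wiles) formula for the Selmer structure $\FF_-$ on $T_\chi$: the difference in $\ooo$-corank between $H^1_{\FF_-}(F,T_\chi)$ and $H^1_{\FF_-^*}(F,T_\chi^*)$ is computed by a product of local terms, namely $\sum_{\lambda}\left(\dim_{\frak F} H^1_{\FF_-}(F_\lambda,T_\chi\otimes\frak F)-\dim_{\frak F}H^0(F_\lambda,T_\chi\otimes\frak F)\right)$ together with the global $H^0$ and $H^0$ of the dual. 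The local condition $\FF_-$ at primes above $p$ is $H^1_-(F_p,T_\chi)$, which by Proposition~\ref{prop:localrank} (the $\ooo$-free statement, applied with $X=T_\chi$, $M=F$) is free of rank $g$; away from $p$ the local conditions are the full local cohomology at the finitely many ramified primes and the finite (unramified) part elsewhere, so those contribute nothing to the corank count. Since $\chi\neq\omega$ and $\chi(\wp)\neq1$ for $\wp\mid p$ (hypotheses (\ref{eqn:chiisnotteich1}) and (\ref{eqn:assnotrivxhi})), the global and local $H^0$'s of $T_\chi$ and $T_\chi^*$ vanish, and the Euler-characteristic computation should yield that $H^1_{\FF_-}(F,T_\chi)$ and $H^1_{\FF_-^*}(F,T_\chi^*)$ have the same $\ooo$-corank.

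Next, granting (\ref{eqn:LCforcedvanishing}) — that $H^1_{\FF_-^*}(F,T_\chi^*)$ is finite — the Euler-characteristic identity forces $H^1_{\FF_-}(F,T_\chi)$ to have $\ooo$-corank $0$, hence to be finite (indeed $\ooo$-torsion). To upgrade finiteness to outright vanishing I would argue that $H^1_{\FF_-}(F,T_\chi)$ is $\ooo$-torsion-free: it injects into $H^1(G_{F,\Sigma},T_\chi)$, and since $T_\chi$ is $\ooo$-free with $H^0(F,T_\chi\otimes\frak F/\ooo)=H^0(F,T_\chi^*\text{-dual})$ trivial (again using $\chi\neq\omega$, $\chi$ nontrivial at $p$, so $T_\chi[\varpi]$ has no $G_F$-fixed points after accounting for the residual character), the module $H^1(G_{F,\Sigma},T_\chi)$ has no $\varpi$-torsion by the long exact sequence associated to $0\to T_\chi\xrightarrow{\varpi}T_\chi\to T_\chi/\varpi\to0$. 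A finite torsion-free $\ooo$-module is zero, giving $H^1_{\FF_-}(F,T_\chi)=0$.

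Alternatively, and perhaps more cleanly, I would pass through the $\LL$-adic picture: by Lemma~\ref{lem:twistedglobalisom} and the analogous control theorem, $H^1_{\FF_-}(F,T_\chi)$ is (up to the twist already recorded) the quotient of $H^1_{\FF_-}(F,\TT)$ by the augmentation ideal, and one knows $H^1_{\FF_-}(F,\TT)$ is a torsion $\LL$-module whose characteristic ideal is eventually matched against the Rubin-Stark side; combined with the vanishing of the relevant $H^0$ this again produces $H^1_{\FF_-}(F,T_\chi)=0$ once the dual Selmer group is finite. I would state the proof in whichever of these two forms is shortest given what has been established by this point in the paper.

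The main obstacle I expect is pinning down the local term at the primes above $p$ precisely and checking all the $H^0$-vanishing bookkeeping: one must be careful that $\FF_-$ really does select a rank-$g$ (i.e. exactly ``half'') local subspace so that the Greenberg-Wiles count comes out to corank equal to that of the dual, and that the hypotheses (\ref{eqn:assnotrivxhi}) and (\ref{eqn:chiisnotteich1}) genuinely kill every global and local invariant that appears. Once the Euler-characteristic identity is in hand, the deduction from (\ref{eqn:LCforcedvanishing}) is immediate; the real content is the local rank computation, which is already supplied by Proposition~\ref{prop:localrank}, so the argument should be short.
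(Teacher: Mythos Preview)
Your main approach is correct but differs from the paper's. The paper does not compute the Euler characteristic of $\FF_-$ directly; instead it compares $\FF_-$ with the canonical Selmer structure $\FFc$ via the \emph{relative} Wiles formula. Concretely, the paper first identifies $H^1_{\FFc}(F,T_\chi)\cong \oo_{L_\chi}^{\times,\chi}$ (free of rank $g$ by Lemma~\ref{lem:structure for units}) and notes that $H^1_{\FFc^*}(F,T_\chi^*)$ is finite. Since $\FFc$ and $\FF_-$ differ only at $p$, where the local conditions have $\ooo$-ranks $2g$ and $g$ respectively, the Wiles comparison gives
\[
\bigl(g-0\bigr)-\bigl(\textup{rank}\,H^1_{\FF_-}(F,T_\chi)-\textup{corank}\,H^1_{\FF_-^*}(F,T_\chi^*)\bigr)=g,
\]
whence $\textup{rank}\,H^1_{\FF_-}(F,T_\chi)=\textup{corank}\,H^1_{\FF_-^*}(F,T_\chi^*)$, and (\ref{eqn:LCforcedvanishing}) finishes. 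The advantage of the paper's route is that the archimedean bookkeeping is absorbed into the Dirichlet unit theorem (via Lemma~\ref{lem:structure for units}) and into the finiteness of $H^1_{\FFc^*}$, so one never has to tally the complex places by hand. In your absolute computation you must notice that the $g$ complex places of $F$ each contribute $-1$ (since $H^1(\mathbb{C},V)=0$ while $H^0(\mathbb{C},V)=V$), and it is precisely this $-g$ that cancels the $+g$ from the rank-$g$ local condition at $p$; your write-up glosses over this when you say the places away from $p$ ``contribute nothing.'' Your torsion-freeness argument via $H^0(F,T_\chi/\varpi)=0$ is fine, and indeed more explicit than the paper's, which tacitly uses the inclusion $H^1_{\FF_-}(F,T_\chi)\subset H^1_{\FFc}(F,T_\chi)\cong\oo_{L_\chi}^{\times,\chi}$ into a free $\ooo$-module.

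One genuine warning: your alternative route through the $\LL$-adic picture is circular as stated. In the paper, the vanishing $H^1_{\FF_-}(F,\TT)=0$ is Proposition~\ref{prop:minusselmervanishes}, whose proof \emph{uses} the present Lemma as input (it descends from $\TT$ to $T_\chi$ by successively killing $\ZZ_p$-variables and invokes $H^1_{\FF_-}(F,T_\chi)=0$ at the base of the induction). So you cannot appeal to properties of $H^1_{\FF_-}(F,\TT)$ here; stick with your first argument.
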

\begin{proof}
Let $\FFc$ denote the canonical Selmer structure on $T_\chi$, defined as in Example~\ref{example:canonical selmer}. As explained in \cite[Lemma 6.1.2]{mr02} and \cite[Proposition II.2.6]{r00}, it follows from our assumption~(\ref{eqn:assnotrivxhi}) that $H^1_{\FFc}(F,T_\chi)\cong \oo_{L_\chi}^{\times,\chi}$, and therefore $H^1_{\FFc}(F,T_\chi)$ is an $\frak{O}$-module of rank $g$ by Lemma~\ref{lem:structure for units} under the running assumptions.

The $\LL$-module $H^1(F_p,T_\chi\otimes\LL)$ is free of rank $2g$ by Lemma~\ref{lem:twistedisom} and Corollary~\ref{cor:localfull}, and $H^1_{-}(F_p,T_\chi\otimes\LL)$ is a free rank-$g$ direct summand of $H^1(F_p,T_\chi\otimes\LL)$. Furthermore, the natural map
$$H^1(F_p,T_\chi\otimes\LL) \lra H^1(F_p,T_\chi)$$
is surjective thanks to the assumption (\ref{eqn:assnotrivxhi}). It thus follows that $H^1(F_p,T_\chi)$ is a free $\frak{O}$-module of rank $2g$ and $H^1_{\FF_{-}}(F_p,T_\chi)$ is a  direct summand of this module of rank $g$. We conclude that
$$\textup{rank}_{\frak{O}}\, H^1_{\FFc}(F_p,T_\chi) -\textup{rank}_{\frak{O}}\,H^1_{\FF_{-}}(F_p,T_\chi)=g.$$
Finally, it follows from \cite[Proposition 1.6]{wiles} that
\begin{align*}
\left(\textup{rank}_{\frak{O}}\, H^1_{\FFc}(F,T_\chi)-\textup{corank}_{\frak{O}}\, H^1_{\FFc^*}(F,T_\chi^*)\right)-&\left(\textup{rank}_{\frak{O}}\, H^1_{\FF_{-}}(F,T_\chi)-\textup{corank}_{\frak{O}}\, H^1_{\FF_{-}^*}(F,T_\chi^*)\right)\\
&=\textup{rank}_{\frak{O}}\, H^1_{\FFc}(F_p,T_\chi) -\textup{rank}_{\frak{O}}\,H^1_{\FF_{-}}(F_p,T_\chi)\\
&=g.
\end{align*}
Since $ H^1_{\FFc^*}(F,T_\chi^*)$ is finite and $\textup{rank}_{\frak{O}}\, H^1_{\FFc}(F,T_\chi)=g$, we conclude that
$$\textup{rank}_{\frak{O}}\, H^1_{\FF_{-}}(F,T_\chi)=\textup{corank}_{\frak{O}}\, H^1_{\FF_{-}^*}(F,T_\chi^*).$$
Since we assumed (\ref{eqn:LCforcedvanishing}), the proof follows.
\end{proof}

\begin{prop}
\label{prop:minusselmervanishes}
Assuming \textup{(\ref{eqn:LCforcedvanishing})}, $H^1_{\FF_-}(F,T\otimes\LL)=H^1_{\FF_-}(F,\TT)=0$.
\end{prop}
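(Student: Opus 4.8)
The plan is to deduce the statement from Lemma~\ref{lem:minusvanishingforunits} by an Iwasawa-descent (control) argument built on Nekov\'a\v{r}'s Selmer-complex formalism, followed by an application of Nakayama's Lemma. By the twisting isomorphism of Lemma~\ref{lem:twistedglobalisom} it is enough to prove $H^1_{\FF_-}(F,\TT)=0$, the vanishing of $H^1_{\FF_-}(F,T\otimes\LL)$ being then automatic. Since $\Gamma\cong\ZZ_p^{g+1+\delta}$, I would actually prove the slightly more flexible assertion that $H^1_{\FF_-}(F,T_\chi\otimes\ooo[[\bar\Gamma]])=0$ for \emph{every} $\ZZ_p$-free continuous quotient $\Gamma\twoheadrightarrow\bar\Gamma$, arguing by induction on $\textup{rank}_{\ZZ_p}\bar\Gamma$; the base case $\bar\Gamma=1$ (so $\ooo[[\bar\Gamma]]=\ooo$) is precisely Lemma~\ref{lem:minusvanishingforunits}.

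For the inductive step, write $\bar\Gamma=\bar\Gamma'\times\overline{\langle\gamma\rangle}$ with $\overline{\langle\gamma\rangle}\cong\ZZ_p$, put $\TT_{\bar\Gamma}=T_\chi\otimes\ooo[[\bar\Gamma]]$ and $\TT_{\bar\Gamma'}=T_\chi\otimes\ooo[[\bar\Gamma']]$, and use the exact sequence of $G_F$-modules $0\to\TT_{\bar\Gamma}\xrightarrow{\gamma-1}\TT_{\bar\Gamma}\to\TT_{\bar\Gamma'}\to0$ (legitimate because $\TT_{\bar\Gamma}$ is free over $\ooo[[\bar\Gamma]]$). Feeding this into Nekov\'a\v{r}'s descent formalism for the Selmer complexes attached to the Selmer structure $\FF_-$ produces a distinguished triangle whose cohomology gives an exact control sequence, the relevant segment of which reads $\widetilde H^0_f(F,\TT_{\bar\Gamma'})\to\widetilde H^1_f(F,\TT_{\bar\Gamma})\xrightarrow{\gamma-1}\widetilde H^1_f(F,\TT_{\bar\Gamma})\to\widetilde H^1_f(F,\TT_{\bar\Gamma'})$; hence $\widetilde H^1_f(F,\TT_{\bar\Gamma})/(\gamma-1)$ injects into $\widetilde H^1_f(F,\TT_{\bar\Gamma'})$. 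One then has to identify these Selmer-complex groups with the Greenberg-style Selmer groups $H^1_{\FF_-}$: the discrepancy is measured by $H^0$-terms $H^0(F,T_\chi\otimes\ooo[[\bar\Gamma]])$, which vanish under $\hne$ and (\ref{eqn:chiisnotteich1}) (these hypotheses also guarantee that $\FF_\LL$ propagates to $\FFc$ away from $p$), while at $p$ the local condition is the \emph{free} rank-$g$ direct summand $H^1_-(F_p,-)$ provided by Corollary~\ref{cor:localfull}, and the perfect-complex arguments behind Proposition~\ref{prop:localrank} (together with the surjectivity coming from (\ref{eqn:assnotrivxhi})) show that this local term descends with no spurious contribution. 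Thus $H^1_{\FF_-}(F,\TT_{\bar\Gamma})/(\gamma-1)H^1_{\FF_-}(F,\TT_{\bar\Gamma})$ injects into $H^1_{\FF_-}(F,\TT_{\bar\Gamma'})$, which is $0$ by the inductive hypothesis.

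Finally, $H^1_{\FF_-}(F,\TT_{\bar\Gamma})$ is a finitely generated $\ooo[[\bar\Gamma]]$-module (it is a subquotient of the cohomology of a finitely generated Iwasawa module for $\Gal(F_{\Sigma(\FF_-)}/F)$), and $\ooo[[\bar\Gamma]]$ is a complete local ring whose maximal ideal contains $\gamma-1$; since the module equals $(\gamma-1)$ times itself, Nakayama's Lemma forces $H^1_{\FF_-}(F,\TT_{\bar\Gamma})=0$, completing the induction. Taking $\bar\Gamma=\Gamma$ gives $H^1_{\FF_-}(F,\TT)=0$, and Lemma~\ref{lem:twistedglobalisom} yields $H^1_{\FF_-}(F,T\otimes\LL)=0$ as well. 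I expect the main obstacle to be the second step: matching Nekov\'a\v{r}'s Selmer-complex cohomology with the Selmer groups $H^1_{\FF_-}$ uniformly along the tower, i.e. verifying that the local conditions defining $\FF_-$ are liftable (cartesian) so that the descent triangle is exact with no extra terms; once the freeness statements of Corollary~\ref{cor:localfull} and the vanishing of the ambient $H^0$'s are recorded, the remainder is formal.
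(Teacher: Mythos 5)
Your proposal is correct and follows essentially the same strategy as the paper's proof: induct on the number of $\ZZ_p$-factors in $\Gamma$, using the short exact sequence $0\to\TT_{\bar\Gamma}\xrightarrow{\gamma-1}\TT_{\bar\Gamma}\to\TT_{\bar\Gamma'}\to 0$ to produce an injection of Selmer groups $H^1_{\FF_-}(F,\TT_{\bar\Gamma})/(\gamma-1)\hookrightarrow H^1_{\FF_-}(F,\TT_{\bar\Gamma'})$, with Lemma~\ref{lem:minusvanishingforunits} as the base case and Nakayama's Lemma at every stage. The one place you differ is in how you obtain that injection: you route through Nekov\'a\v{r}'s Selmer-complex formalism and a control triangle, whereas the paper extracts the injection directly from the ordinary Galois cohomology long exact sequence restricted to Selmer subgroups. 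The Selmer-complex machinery is a heavier hammer than needed here and introduces precisely the bookkeeping issue you flag (liftability/cartesianness of the local conditions so that the control triangle is clean). That issue is genuine but is handled, in either formulation, by the structural input already available: away from $p$ the $\FF_-$-condition is the full local $H^1$, so there is nothing to check; at $p$ the condition $H^1_-(F_p,-)$ is a free $\LL$-direct summand of the full local cohomology (Corollary~\ref{cor:localfull}), so that $\gamma-1$ acting on a free complement is injective and a class whose $(\gamma-1)$-multiple lies in $H^1_-$ must itself lie in $H^1_-$. Once you record this, your control sequence closes up with no extraneous terms, and the remainder of your argument is exactly the paper's. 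In short, the proof is sound, but you could bypass the Selmer-complex layer entirely and argue at the level of ordinary cohomology as the paper does, obtaining the injection from the long exact sequence together with the freeness of the local condition.
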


\begin{proof}
Let $\Gamma\cong \Gamma_1 \times \cdots  \times\Gamma_{g+1-\delta},$
where $\Gamma_i\cong\ZZ_p$ and $\delta$ is Leopoldt's defect.  Let $\gamma_i$ be a topological generator of $\Gamma_i$. 
For each $1\leq j \leq g+1-\delta$, set $\mathcal{A}_j=(\gamma_1-1,\cdots,\gamma_j-1)$ and $\mathcal{A}_0=0$. There is an exact sequence
$$0\lra T_\chi\otimes\LL/\mathcal{A}_{j-1}\stackrel{\gamma_j-1}{\lra}  T_\chi\otimes\LL/\mathcal{A}_{j-1} \lra T_\chi\otimes\LL/\mathcal{A}_{j}\lra 0$$
that induces an injection
 $$H^1_{\FF_-}(F,T_\chi\otimes\LL/\mathcal{A}_{j-1})\Big{/} (\gamma_j-1)\hookrightarrow H^1_{\FF_-}(F,T_\chi\otimes\LL/\mathcal{A}_{j}).$$
 Noting that
 $$H^1_{\FF_-}(F,T_\chi\otimes\LL/\mathcal{A}_{g+1-\delta})=H^1_{\FF_-}(F,T_\chi)=0$$
 and using Nakayama's Lemma at each step, it follows by induction that
 $$H^1_{\FF_-}(F,T_\chi\otimes\LL/\mathcal{A}_{0})=H^1_{\FF_-}(F,T_\chi\otimes\LL)=0.$$
\end{proof}

\begin{prop}
\label{prop:4termexact}
The following sequences of $\LL$-modules are exact:
\begin{itemize}
\item[(i)]$0\lra{H^1_{\FF_{\al_\infty}}(F,\TT)}\stackrel{\textup{loc}_p^+}{\lra}\al_\infty {\lra} \left(H^1_{\FF_{-}^*}(F,\TT^*)\right)^{\vee}\lra{\left(H^1_{\FF_{\al_\infty}^*}(F,\TT^*)\right)^{\vee}}\lra 0.$
\end{itemize}
\begin{itemize}
\item[(ii)] For any class $c \in H^1_{\FF_{\al_\infty}}(F,\TT)$,
$$0\lra {\frac{H^1_{\FF_{\al_\infty}}(F,\TT)}{\LL\cdot c}}\stackrel{\textup{loc}_p^+}{\lra}\frac{\al_\infty}{\LL\cdot \textup{loc}_p^+(c)} {\lra} \left(H^1_{\FF_{-}^*}(F,\TT^*)\right)^{\vee}\lra{\left(H^1_{\FF_{\al_\infty}^*}(F,\TT^*)\right)^{\vee}}\lra 0.$$
\end{itemize}
\end{prop}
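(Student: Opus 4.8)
The plan is to obtain part (i) from the Poitou--Tate global duality exact sequence applied to the pair of Selmer structures $(\FF_{\al_\infty},\FF_-)$ on $\TT$, and then to derive part (ii) from part (i) by a purely formal diagram-chasing argument using the class $c$. For part (i), I would start from the tautological observation that $\FF_{\al_\infty}$ and $\FF_-$ agree at every place $\lambda \nmid p$ and that at $p$ the local conditions satisfy $H^1_{\FF_-}(F_p,\TT) \subset H^1_{\FF_{\al_\infty}}(F_p,\TT)$ with quotient $\al_\infty$, the latter being a free rank-one $\LL$-summand of $H^1_+(F_p,\TT)$ by Definition~\ref{def:modifiedlocalcondition} and Corollary~\ref{cor:localfull}. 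The general comparison exact sequence for two Selmer structures whose local conditions are nested (see \cite[Theorem 2.3.4]{mr02} or the analogous statement in \cite[Ch. I]{r00}, adapted to the $\LL$-adic setting as in \cite{kbbiwasawa}) then reads
$$0 \lra H^1_{\FF_-}(F,\TT) \lra H^1_{\FF_{\al_\infty}}(F,\TT) \stackrel{\textup{loc}_p^+}{\lra} \al_\infty \lra \left(H^1_{\FF_{-}^*}(F,\TT^*)\right)^{\vee} \lra \left(H^1_{\FF_{\al_\infty}^*}(F,\TT^*)\right)^{\vee} \lra 0,$$
where the middle map is the composition of the global localization at the primes above $p$ with the projection $H^1_+(F_p,\TT) \twoheadrightarrow \al_\infty$. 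By Proposition~\ref{prop:minusselmervanishes} (applicable since we are assuming (\ref{eqn:LCforcedvanishing})), the leftmost nontrivial term $H^1_{\FF_-}(F,\TT)$ vanishes, which collapses the six-term sequence to the four-term sequence in (i) with $\textup{loc}_p^+$ now injective.

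For part (ii), fix $c \in H^1_{\FF_{\al_\infty}}(F,\TT)$ and consider the map $\LL \to H^1_{\FF_{\al_\infty}}(F,\TT)$, $r \mapsto rc$, together with its composite $\LL \to \al_\infty$ sending $r \mapsto r\cdot\textup{loc}_p^+(c)$. Because $\textup{loc}_p^+$ is injective on $H^1_{\FF_{\al_\infty}}(F,\TT)$ by (i), the snake lemma applied to the commutative square with vertical maps these two $\LL$-module maps and horizontal maps $\textup{loc}_p^+$ shows that the induced map on cokernels
$$\frac{H^1_{\FF_{\al_\infty}}(F,\TT)}{\LL\cdot c} \stackrel{\textup{loc}_p^+}{\lra} \frac{\al_\infty}{\LL\cdot \textup{loc}_p^+(c)}$$
is again injective (the relevant kernel term is $\ker$ of an injective map, hence zero), while its cokernel is unchanged, namely $\textup{coker}(\textup{loc}_p^+) = \left(H^1_{\FF_{-}^*}(F,\TT^*)\right)^{\vee}$ sitting in the same extension with $\left(H^1_{\FF_{\al_\infty}^*}(F,\TT^*)\right)^{\vee}$ as before. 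Splicing these together yields the four-term exact sequence of (ii).

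The main obstacle I anticipate is not the formal part (ii) but making the input to (i) completely rigorous in the $\LL$-adic setting: one must ensure that the Poitou--Tate/Greenberg--Wiles formalism of \cite{mr02,wiles} applies verbatim over the (several-variable, regular but not a DVR) Iwasawa algebra $\LL$, in particular that the local condition $\al_\infty$ is genuinely the image of $\textup{loc}_p^+$ and that the orthogonal complement $\FF_-^*$ of $\FF_{\al_\infty}$ at $p$ is exactly what the duality pairing produces --- here one uses that $\al_\infty$ is a free rank-one \emph{direct summand}, so that $\al_\infty$ and $H^1_-(F_p,\TT)$ are exact orthogonal complements inside the self-dual module $H^1(F_p,\TT)$ under the local Tate pairing. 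I would handle this by invoking the $\LL$-adic Selmer complex machinery of Nekov\'a\v{r} \cite{nekovar06} (already used in Proposition~\ref{prop:localrank}) to get the exact triangle relating $\FF_-$ and $\FF_{\al_\infty}$, rather than re-deriving Poitou--Tate by hand; the freeness statements from Corollary~\ref{cor:localfull} guarantee all the perfectness hypotheses needed there.
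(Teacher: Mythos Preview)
Your proposal is correct and follows essentially the same route as the paper: the paper's proof is simply ``Poitou--Tate global duality, used along with Proposition~\ref{prop:minusselmervanishes}'' for (i), and ``an immediate consequence of (i)'' for (ii). Your write-up supplies the details the paper omits (the nested-Selmer-structure comparison sequence and the snake-lemma reduction for (ii)), and your closing paragraph about justifying the $\LL$-adic Poitou--Tate step is a legitimate concern that the paper itself leaves implicit.
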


\begin{proof}
The first follows exact sequence comes from Poitou-Tate global duality, used along with Proposition~\ref{prop:minusselmervanishes}. The second is an immediate consequence of (i).
\end{proof}

\section{Rubin-Stark elements and an Euler system of rank $r$}
\label{sec:RS}

In this section, we review Rubin's \cite{ru96} integral refinement of Stark's conjectures and construct Kolyvagin systems for the modified Selmer structure $\FF_{\al_{\infty}}$ on $T_\chi\otimes\LL$, coming from the Rubin-Stark elements.

For the rest of this paper, we assume that the Rubin-Stark conjecture~\cite[Conjecture~$\textup{B}^{\prime}$]{ru96} holds.

Let $\chi,f_\chi$ and $L$ be as above, and recall the definitions of the collections of extensions $\frak{E}_0$ and $\frak{E}$ from \S\ref{subsec:notation}. Fix forever a finite set $S$ of places of $F$ that does \emph{not} contain any prime above $p$, but contains the set of infinite places $S_\infty$ and all primes $\lambda \nmid p$ at which $\chi$ is ramified. Assume that $|S| \geq r+1$. For each $\mathcal{K} \in \frak{E}$, let
$$S_{\KKK}=\{\hbox{places of } \KKK \hbox{ that lie above the places in } S\} \cup \{\hbox{places of } \KKK \hbox{ at which } \KKK/F \hbox{ is ramified}\}$$
be a set of places of $\KKK$. Let $\mathcal{O}_{\KKK,S_\KKK}^{\times}$ denote the $S_\KKK$ units of $\KKK$, and $\Delta_\KKK$ ({resp.}, $\delta_\KKK$) denote $\hbox{Gal}(\KKK/F)$ ({resp.}, $|\hbox{Gal}(\KKK/F)|$).  Rubin in~\cite[Conjecture $\textup{B}^{\prime}$]{ru96} predicts the existence of certain elements
$$\tilde{\varepsilon}_{\KKK,S_\KKK} \in \Lambda_{\KKK,S_\KKK} \subset \frac{1}{\delta_\KKK}{\wedge^g} \mathcal{O}_{\KKK,S_\KKK}^{\times}$$
where the module $\Lambda_{\KKK,S_\KKK}$ is defined in~\cite[\S2.1]{ru96} and has the property that for any homomorphism
$$\tilde{\psi} \in \hbox{Hom}_{\QQ_p[\Delta_\KKK]}(\wedge^g\mathcal{O}_{\KKK,S_\KKK}^{\times,\wedge} \otimes \QQ_p,\mathcal{O}_{\KKK,S_\KKK}^{\times, \wedge} \otimes \QQ_p)$$
which is induced from a homomorphism
$$\psi \in \hbox{Hom}_{\ZZ_p[\Delta_\KKK]}(\wedge^g\mathcal{O}_{\KKK,S_\KKK}^{\times,\wedge}, \mathcal{O}_{\KKK,S_\KKK}^{\times, \wedge}),$$
one has $\tilde{\psi}(\Lambda_{K,S_K}) \subset \mathcal{O}_{\KKK,S_\KKK}^{\times, \wedge}$. We remark that the $g$-th exterior power $\wedge^g\mathcal{O}_{\KKK,S_\KKK}^{\times,\wedge}$ (and all other exterior powers which appear below) is taken in the category of $\ZZ_p[\Delta_\KKK]$-modules.
\begin{rem}\label{rem:T}
Rubin's conjecture predicts that the elements $\tilde{\varepsilon}_{\KKK,S_\KKK}$ should in fact lie inside  the module $\frac{1}{\delta_\KKK}{\wedge^g} \mathcal{O}_{\KKK,S_\KKK,\mathcal{T}}^{\times}$, where $\mathcal{T}$ is a finite set of primes disjoint from $S_\KKK$, chosen in a way that the group $\mathcal{O}_{\KKK,S_\KKK,\mathcal{T}}^{\times}$ of $S_\KKK$-units which are congruent to 1 modulo all the primes in $\mathcal{T}$ is torsion-free. As explained in \cite[Remark 3.1]{kbbiwasawa}, one can safely ignore $\mathcal{T}$ as far as we are concerned in this paper.
\end{rem}
Let $F^{\textup{cyc}}$ denote the cyclotomic $\ZZ_p$-extension of $F$ and for $m\in \ZZ^+$, let  $F_{m}^{\textup{cyc}}$ be the unique subextension of $F$ of degree $p^m$.
\begin{define}
\label{def:wildStark}
For $\mathcal{K}=M\cdot L(\eta) \in \frak{E}_0$ (or $\mathcal{K}=M\cdot F(\eta) \in \frak{E}$), where $\eta \in \NN(\RR)$ and $M \subset F_\infty$ a finite extension of $F$, choose $m \in \ZZ^+$ so that $M \not\subset F_{m}^{\textup{cyc}}$ and set $M_m=M\cdot F_{m}^{\textup{cyc}}$, $\mathcal{K}_m=\KKK\cdot M_m$. Define
$$\varepsilon_{_{\KKK,S_{\KKK}}}=\mathbf{N}^r_{_{\KKK_m/\KKK}}\left(\tilde{\varepsilon}_{_{\KKK_m,S_{_{\KKK_m}}}}\right) $$
 where $\mathbf{N}^r_{_{\KKK_m/\KKK}}$ denotes the norm map induced on the $r$-th exterior power. It follows from \cite[Proposition 6.1]{ru96} that $\varepsilon_{_{\KKK,S_{\KKK}}}$ is well-defined.
\end{define}
As we have fixed $S$ (therefore $S_\KKK$ as well), we will often drop $S$ or $S_\KKK$ from the notation and denote $\varepsilon_{\KKK,S_\KKK}$ by $\varepsilon_{\KKK}$; or sometimes use $S$ instead of $S_\KKK$ and denote $\mathcal{O}_{\KKK,S_\KKK}$ by $\mathcal{O}_{\KKK,S}$.

For any number field $\KKK$, Kummer theory gives a canonical isomorphism
$$H^1(\KKK,\ooo(1)) \cong \KKK^{\times,\wedge} \otimes_{\ZZ_p}\ooo := \left(\varprojlim_n \KKK^{\times}/(\KKK^{\times})^{p^n})\right) \otimes_{\ZZ_p} \ooo.$$
Under this identification, we view  each $\varepsilon_{\KKK,S_\KKK}$ as an element of $\frac{1}{\delta_\KKK}\wedge^g H^1(\KKK,\ooo(1))$. The distribution relation satisfied by the Rubin-Stark elements (\cite[Proposition 6.1]{ru96}) shows that the collection $\{\varepsilon_{\KKK,S_\KKK}\}_{K\in\mathcal{K}}$ is an Euler system of rank $g$ in the sense of~\cite{pr-es}, as appropriately generalized by \cite{kbbesrankr} to allow denominators).

\subsection{Twisting by the character $\chi$}
\label{sec:twisting}
Following the formalism of~\cite[\S II.4]{r00}, we may \emph{twist} the Euler system $\{\varepsilon_{\KKK,S_\KKK}\}_{_{\KKK\in\kk}}$ of rank $g$ for the representation $\ooo(1)$, in order to obtain an Euler system  for the representation $T_\chi=\ooo(1)\otimes\chi^{-1}$.

 For a finite subextension $M$ of $F_\infty/F$ and $\eta \in \NN(\RR)$, let $\Gamma_M=\textup{Gal}(M/F)$ and define $\KKK=M\cdot F(\eta)$, $\KKK_0=M\cdot L(\eta)$. Set $G^{\eta}:=\Gal(F(\eta)/F)$,\,  $\Delta^\eta:=\Gal(L(\eta)/F)=G^\eta\times\Delta$, and finally $G^\eta_{M}:=\Gal(\KKK/F)=G^\eta\times\Gamma_M$, which is the $p$-part of $\Delta^\eta_{M}:=\hbox{Gal}(\KKK_0/F)\cong G^\eta_{M} \times \Delta=G^\eta\times\Gamma_M\times\Delta$. (These canonical factorizations of the Galois groups follow easily from the fact that  $|\Delta|$ is prime to $p$ and from ramification considerations.) The array of fields and Galois groups below summarizes this paragraph:
$$\xy\xymatrix{&M\cdot L(\eta)=\KKK_0\ar@{-}[rd]^{\Gamma_M}\ar@{-}[dl]_{\Delta}\ar@{-}[ddd]^(.33){\Delta^\eta_{M}}&\\
M\cdot F(\eta)=\KKK\ar@{-}[d]_{\Gamma_M}\ar@{-}[ddr]^(.35){G^\eta_{M}}&&L(\eta)\ar@{-}[d]^{G^\eta}\ar@{-}[ddl]_(.35){\Delta^\eta}\\
F(\eta)\ar@{-}[rd]_{G^\eta}&&L\ar@{-}[ld]^{\Delta}\\
&F&
}\endxy$$

 Let $\chi$ be as above, and let $\eee_{\chi}$ denote the idempotent $\frac{1}{|\Delta|}\sum_{\sigma \in \Delta}\chi(\sigma)\sigma^{-1}$, which regard as an element of the groups ring $\ooo[\Delta^\tau_{n}]$ via the factorization above. For simplicity, we set $\delta=\delta_{\KKK}$ (note that $\delta$ also equals $\delta_{\KKK_0}$ up to multiplication by a $p$-adic unit) allowing ourselves to be somewhat sloppy, as the denominators will not be present when the Rubin-Stark elements are utilized for our main purposes. 

 For any integral ideal $\eta$ which is prime to $pf_{\chi}$, we define
\begin{align} \label{def:twist}
\varepsilon_{\KKK_0}^{\chi}:=\epsilon_{\chi}\varepsilon_{\KKK_0,S} &\in  \frac{1}{\delta}\epsilon_{\chi}\wedge^g H^1(\KKK_0,\ooo(1))\\ \label{eqn*}
&=\frac{1}{\delta} \wedge^g \eee_\chi H^1(\KKK_0,\ooo(1))\\
&= \frac{1}{\delta}\wedge^g H^1(\KKK_0,\ooo(1))^{\chi}.
\end{align}

 Inflation-restriction yields
$$H^1(\KKK,\ooo(1)\otimes\chi^{-1})\lra H^1(\KKK_0,\ooo(1)\otimes\chi^{-1})^{\Delta}.$$
On the other hand, since $G_{\KKK_0}$ is in the kernel of $\chi$,
 $$H^1(\KKK_0,\ooo\otimes\chi^{-1}) \cong H^1(\KKK_0,\ooo(1))\otimes\chi^{-1},$$ hence
$$
H^1(\KKK,T_\chi) \stackrel{\sim}{\lra} H^1(\KKK_0,T_\chi)^{\Delta} \cong H^1(\KKK_0,\ooo(1))^{\chi}.
$$
 This induces an isomorphism
\begin{equation}\label{eq:main twist}
\wedge^g H^1(\KKK,T) \stackrel{\sim}{\lra} \wedge^g H^1(\KKK_0,\ooo(1))^{\chi}.
\end{equation}
The inverse image of the element $\varepsilon_{\KKK_0}^{\chi}$  (which was defined in~(\ref{def:twist})) under the isomorphism induced from~(\ref{eq:main twist}) above will be denoted by $\varepsilon_{\KKK}^{\chi}$. The collection $\{\varepsilon_{\KKK}^{\chi}\}_{_{\KKK\in \frak{E}}}$ will be called the \emph{Rubin-Stark element Euler system of rank r}.

Next, we construct an \emph{Euler system of rank one} (i.e., an Euler system in the sense of~\cite{r00}) using ideas from~\cite[\S6]{ru96} and \cite[\S1.2.3]{pr-es}. The main point is that, if one applied the arguments of~\cite{ru96,pr-es} directly, all one would get (after applying Kolyvagin's descent) would be a $\LL$-adic Kolyvagin system for the coarser Selmer structure $\FF_\LL$ on $T_\chi\otimes\LL$. In Section \S\ref{subsec:ESKSmodified}, we overcome this difficulty and obtain a $\LL$-adic Kolyvagin system  for the finer Selmer structure $\FF_{\al_\infty}$ on $T_\chi\otimes\LL$.

\subsection{Choosing the homomorphisms}
\label{subsec:homs}
For any field $\KKK\in\frak{E}$, recall that $\Delta_\KKK:=\Gal(\KKK/F)$ and write $\delta=|\Delta_\KKK|$. Using the elements of
\be
\label{eqn:bighoms}
\varprojlim_{\KKK \in \frak{E}} \wedge^{r-1}\,\hbox{Hom}_{\ooo[\Delta_\KKK]}\left(H^1(\KKK,T_\chi), \ooo[\Delta_K]\right)\ee
(more precisely, using the elements those are in the image of the canonical map
\be\label{eqn:localizationhoms}{
\varprojlim_{\KKK \in \frak{E}} \wedge^{r-1}\,\hbox{Hom}_{\ooo[\Delta_\KKK]}\left(H^1_+(\KKK_p,T_\chi), \ooo[\Delta_\KKK]\right) \lra
\varprojlim_{\KKK \in \frak{E}} \wedge^{r-1}\,\hbox{Hom}_{\ooo[\Delta_\KKK]}\left(H^1(\KKK,T_\chi), \ooo[\Delta_\KKK]\right)
}
\ee
which is  induced from the localization followed by projection to $H^1_+(\KKK_p,T_\chi)$) and the Rubin-Stark elements above, we obtain an Euler system (in the sense of~\cite{r00}) for $T_\chi$, following the arguments of \cite{kbbesrankr} (which are based on Rubin's ideas \cite[\S6]{ru96}; see also~\cite[\S1.2.3]{pr-es}). We omit the details here and refer the reader to these articles. 
\begin{rem}
\label{rem:denomdisappear}
For $\Psi=\{\psi_{\KKK}\} \in \varprojlim_{\KKK \in \frak{E}} \wedge^{r-1}\,\hbox{Hom}_{\ooo[\Delta_\KKK]}\left(H^1(\KKK,T_\chi), \ooo[\Delta_K]\right)$ we have
$$\psi_{\KKK}(\varepsilon_{\KKK}^\chi) \in H^1(\KKK,T_\chi)$$ by the defining (integrality) property of the elements $\varepsilon_{\KKK}^\chi \in \frac{1}{\delta} \wedge^g H^1(\KKK,T_\chi),$
namely, the denominators $\delta$ will disappear once we apply the homomorphisms from~(\ref{eqn:bighoms}) on the Rubin-Stark elements.
\end{rem}

Let $\textup{ES}(T_\chi,\frak{E})=\textup{ES}(T_\chi)$ denote the collection of Euler systems for $T_\chi$ in the sense of \cite[\S2]{r00} and \cite[\S3.2]{mr02}.

\begin{define}
\label{def:alrestrictedES}
Let $\frak{L} \subset \displaystyle{\varprojlim_{M\in \frak{E}}H^1_{+}(M_{p},T_\chi)}$ be a $\LL_{\frak{K}}$-direct summand as in Definition~\ref{def:FminusonTchi}. An Euler system $\textbf{c}=\{c_\mathcal{K}\} \in \textup{ES}(T_\chi)$ is called an \emph{$\frak{L}$-restricted Euler system} if
$$\textup{loc}_p(c_\mathcal{K}) \in H^1_-(\mathcal{K}_p,T_\chi)\oplus \al_\mathcal{K}$$
for every $\mathcal{K}\in \frak{E}$. The module of $\frak{L}$-restricted Euler systems is denoted by $\textup{ES}_{\frak{L}}(T_\chi)$.
\end{define}
The following Proposition is proved following the arguments of \cite[\S 3.3-3.4]{kbbesrankr} \emph{verbatim}. The key point is to make use of Corollary~\ref{cor:localfull}.
\begin{prop} $\,$
\label{prop:ellrestrictedES}
\begin{enumerate}
\item[(i)] There exists
$\Psi=\{\psi_{\KKK}\} \in \varprojlim_{\KKK \in \frak{E}} \wedge^{r-1}\,\textup{Hom}_{\ooo[\Delta_\KKK]}\left(H^1_+(\KKK_p,T_\chi), \ooo[\Delta_\KKK]\right)$ such that $\psi_\KKK$ maps $\wedge^r H^1_+(\KKK_p,T_\chi)$ isomorphically onto $\al_\KKK$.
\item[(ii)] For $\Psi$ as above, denote its image under \textup{(}\ref{eqn:localizationhoms}\textup{)} still by $\Psi$. Then
$$\textbf{c}^{\chi}_{\Psi}:=\{\psi_{\KKK}(\varepsilon_{\KKK}^\chi)\} \in \textup{ES}_{\frak{L}}(T_\chi).$$
\end{enumerate}
\end{prop}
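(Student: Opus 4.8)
The plan is to mirror the construction carried out in \cite[\S3.3--3.4]{kbbesrankr}, where the key technical input is replaced by Corollary~\ref{cor:localfull}. First I would address part (i). By Corollary~\ref{cor:localfull}(ii), the $\LL_{\frak{K}}$-module $\varprojlim_{M\in\frak{E}}H^1_+(M_p,T_\chi)$ is free of rank $g=r$, and $\frak{L}$ is by hypothesis a free rank-one $\LL_{\frak{K}}$-direct summand. Choose an $\LL_{\frak{K}}$-basis $\{e_1,\dots,e_r\}$ of $\varprojlim_M H^1_+(M_p,T_\chi)$ adapted to the direct-sum decomposition, so that $e_1$ generates $\frak{L}$. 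Let $\{e_1^\vee,\dots,e_r^\vee\}$ be the dual basis of $\varprojlim_M \Hom_{\ooo[\Delta_\KKK]}(H^1_+(\KKK_p,T_\chi),\ooo[\Delta_\KKK])$ (which makes sense compatibly at each finite level because freeness is inherited from the inverse limit by Corollary~\ref{cor:localfull} applied at each $\KKK$, via Proposition~\ref{prop:localrank}). Then set $\Psi=e_2^\vee\wedge\cdots\wedge e_r^\vee$, an element of $\varprojlim_\KKK \wedge^{r-1}\Hom_{\ooo[\Delta_\KKK]}(H^1_+(\KKK_p,T_\chi),\ooo[\Delta_\KKK])$. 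For a decomposable wedge $v_1\wedge\cdots\wedge v_r \in \wedge^r H^1_+(\KKK_p,T_\chi)$, contracting against $\Psi$ in the standard way recovers (up to sign) the $e_1$-component, i.e.\ the projection onto $\al_\KKK$; since $e_1$ generates $\al_\KKK$ as a free rank-one $\ooo[\Delta_\KKK]$-module, this map $\wedge^r H^1_+(\KKK_p,T_\chi)\to\al_\KKK$ is an isomorphism. Compatibility with corestriction as $\KKK$ varies is exactly what the choice of $\Psi$ inside the inverse limit guarantees.

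For part (ii), I would first push $\Psi$ forward under the map \eqref{eqn:localizationhoms} — localization at $p$ followed by projection to $H^1_+(\KKK_p,T_\chi)$ — to obtain a compatible system of homomorphisms $\psi_\KKK\in\wedge^{r-1}\Hom_{\ooo[\Delta_\KKK]}(H^1(\KKK,T_\chi),\ooo[\Delta_\KKK])$. By Remark~\ref{rem:denomdisappear}, applying $\psi_\KKK$ to the rank-$r$ Rubin--Stark element $\varepsilon_\KKK^\chi\in\frac{1}{\delta}\wedge^g H^1(\KKK,T_\chi)$ clears the denominator, so $\psi_\KKK(\varepsilon_\KKK^\chi)\in H^1(\KKK,T_\chi)$. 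That the resulting collection $\textbf{c}^\chi_\Psi=\{\psi_\KKK(\varepsilon_\KKK^\chi)\}$ satisfies the Euler system distribution relations follows exactly as in \cite{kbbesrankr}, because the Rubin--Stark Euler system of rank $r$ satisfies the rank-$r$ distribution relations (the distribution relation of \cite[Prop.~6.1]{ru96}) and the norm-compatible system $\Psi$ converts these into the rank-one Euler system relations — this is the content of the construction in \S\ref{subsec:homs}, carried out following \cite[\S6]{ru96} and \cite[\S1.2.3]{pr-es}. So $\textbf{c}^\chi_\Psi\in\textup{ES}(T_\chi)$.

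It remains to verify the $\frak{L}$-restriction property, namely $\textup{loc}_p(\psi_\KKK(\varepsilon_\KKK^\chi))\in H^1_-(\KKK_p,T_\chi)\oplus\al_\KKK$ for every $\KKK\in\frak{E}$. In fact I expect the image to land in $\al_\KKK$ on the nose. The point is that $\psi_\KKK$ was built so that its localization-at-$p$-component factors through the projection to $H^1_+(\KKK_p,T_\chi)$ and then through $\Psi=e_2^\vee\wedge\cdots\wedge e_r^\vee$; contracting the localized rank-$r$ element $\textup{loc}_p^+(\varepsilon_\KKK^\chi)\in\frac1\delta\wedge^r H^1_+(\KKK_p,T_\chi)$ against $\Psi$, by the isomorphism established in part (i), produces an element of $\al_\KKK$. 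Since $\al_\KKK\subset H^1_-(\KKK_p,T_\chi)\oplus\al_\KKK$, the restriction condition of Definition~\ref{def:alrestrictedES} holds, and hence $\textbf{c}^\chi_\Psi\in\textup{ES}_{\frak{L}}(T_\chi)$.

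\textbf{Main obstacle.} The principal subtlety is not any single local computation but keeping everything compatible along the inverse limit over $\frak{E}$: one must choose the bases $\{e_i\}$, the dual bases $\{e_i^\vee\}$, and hence $\Psi$ simultaneously at all levels $\KKK$, and check that the isomorphism $\wedge^r H^1_+(\KKK_p,T_\chi)\xrightarrow{\sim}\al_\KKK$ of part (i) commutes with corestriction — this is where Corollary~\ref{cor:localfull} (freeness of the inverse limit as an $\LL_{\frak{K}}$-module) does the real work, since it lets one split $\frak{L}$ off as a direct summand globally rather than level-by-level. The translation of the rank-$r$ distribution relations into rank-one Euler system relations is delicate but is precisely the content of \cite{kbbesrankr} which we may invoke \emph{verbatim}.
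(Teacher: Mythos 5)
Your proposal follows the same route as the paper, which simply refers to \cite[\S 3.3--3.4]{kbbesrankr} \emph{verbatim} with Corollary~\ref{cor:localfull} as the key input; the fleshing-out of part~(i) via a basis adapted to the direct-sum splitting of $\frak{L}$ is exactly the construction used there.

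One imprecision in your final step is worth correcting. You assert that $\textup{loc}_p(\psi_\KKK(\varepsilon_\KKK^\chi))$ lands in $\al_\KKK$ ``on the nose.'' What the contraction argument actually controls is only the $+$-projection: since $\psi_\KKK$ was built from functionals factoring through $\textup{loc}_p^+$, contracting $\textup{loc}_p^+(\varepsilon_\KKK^\chi)\in\tfrac{1}{\delta}\wedge^r H^1_+(\KKK_p,T_\chi)$ against $\Psi$ shows that $\textup{loc}_p^+(\psi_\KKK(\varepsilon_\KKK^\chi))\in\al_\KKK$. The $-$-component of the localization is a priori unconstrained, because $\psi_\KKK(\varepsilon_\KKK^\chi)$ is a linear combination of \emph{global} classes whose localizations have both $+$- and $-$-parts. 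The correct conclusion is therefore that $\textup{loc}_p(\psi_\KKK(\varepsilon_\KKK^\chi))=\textup{loc}_p^-(\psi_\KKK(\varepsilon_\KKK^\chi))+\textup{loc}_p^+(\psi_\KKK(\varepsilon_\KKK^\chi))\in H^1_-(\KKK_p,T_\chi)\oplus\al_\KKK$, which is exactly the restriction required by Definition~\ref{def:alrestrictedES}, so the conclusion stands; but the phrase ``Since $\al_\KKK\subset H^1_-(\KKK_p,T_\chi)\oplus\al_\KKK$'' should be replaced by the observation that the $+$-part lies in $\al_\KKK$ and the $-$-part lies in $H^1_-$.
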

Let $c_{F,\Psi}^\chi:=\psi_F(\varepsilon_F^\chi)\in H^1_{\FF_{\al}}(F,T_\chi)$ be the initial term of the $\frak{L}$-restricted Euler system $\textbf{c}^{\chi}_{\Psi}$.
\subsection{From Euler systems of rank $r$ to Kolyvagin systems for modified Selmer structures}
\label{subsec:ESKSmodified}
Let $\PP_\chi$ be a fixed set of places of $F$ that does not contain the archimedean places, primes at which $T_\chi$ is ramified and primes above $p$. Recall the definition of generalized module of Kolyvagin systems $\overline{\KS}(T_\chi\otimes\LL,\FF_\LL,\PP_\chi)$ from \cite[Definition 3.1.6]{mr02}.
\begin{thm}[Mazur and Rubin]
\label{thm:ESKSmain} There is a canonical map
$$\ES(T_\chi) \lra \overline{\KS}(T_\chi\otimes\LL,\FF_\LL,\PP_\chi),$$
with the property that if $\mathbf{c}$ maps to $\pmb{\kappa} \in \overline{\KS}(\TT,\FF_{\LL},\PP)$ then
$$\kappa_1=\{c_{M}\} \in \varprojlim_{M} H^1(M,T_\chi)=H^1(F,T_\chi\otimes\LL),$$
where the inverse limit is over the finite sub-extensions $M$ of $F_{\infty}/F$.
\end{thm}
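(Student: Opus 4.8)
The claim is the Iwasawa-theoretic incarnation of Mazur and Rubin's Euler-system-to-Kolyvagin-system map \cite[Theorem 3.2.4]{mr02}, and the plan is to build it layer-by-layer along $F_\infty/F$ by Kolyvagin's derivative construction and then pass to the inverse limit. (Equivalently, one can regard the rank-one Euler system for $T_\chi$ over the family $\frak{E}$ as an Euler system for $\TT=T_\chi\otimes\LL$ over $F$ via the semi-local Shapiro isomorphism and quote \cite[Theorem 3.2.4]{mr02} with base ring $R=\LL$; I will present the layerwise version since it makes the identification of $\kappa_1$ transparent.)

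First I would set up the derivative classes at a fixed finite subextension $M\subset F_\infty$. For a squarefree $\eta\in\NN(\RR)$ whose prime factors are drawn from $\PP_\chi$, let $I_\eta\subset\ooo$ be the associated Kolyvagin ideal (generated by the $\ell-1$ and the relevant Euler-factor data for $\ell\mid\eta$, as in \cite[\S1.1, \S3.1]{mr02}), and let $D_\eta=\prod_{\ell\mid\eta}D_\ell$ be the product of Kolyvagin's derivative operators acting on $c_{M\cdot F(\eta)}\in H^1(M\cdot F(\eta),T_\chi)$. The Euler-system distribution relations, read modulo $I_\eta$, show that the image of $D_\eta c_{M\cdot F(\eta)}$ in $H^1(M\cdot F(\eta),T_\chi/I_\eta)$ is fixed by $\Gal(M\cdot F(\eta)/M)$; since this group is a $p$-group and, under the standing hypotheses on $T_\chi$, its $H^1$ and $H^2$ on $T_\chi/I_\eta$ vanish, the restriction map $H^1(M,T_\chi/I_\eta)\xrightarrow{\ \sim\ }H^1(M\cdot F(\eta),T_\chi/I_\eta)^{\Gal(M\cdot F(\eta)/M)}$ is an isomorphism, and I define $\kappa_{M,\eta}\in H^1(M,T_\chi/I_\eta)$ to be the corresponding descended class.

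Next I would verify the Kolyvagin-system axioms and assemble. Over each $M$, $\{\kappa_{M,\eta}\}_\eta$ lies in $\overline{\KS}(T_\chi,\FFc,\PP_\chi)$: the class $\kappa_{M,\eta}$ is unramified outside $\Sigma(\FFc)\cup\{\ell\mid\eta\}$, at the primes above $\ell\mid\eta$ it lands in the transverse subspace, and the finite--singular comparison relation $\phi_\ell^{\mathrm{fs}}(\kappa_{M,\eta})=\kappa_{M,\eta\ell}$ holds --- this is exactly \cite[Theorem 3.2.4]{mr02}. Norm-compatibility of the $c_\KKK$ together with compatibility of the operators $D_\eta$ gives $\textup{cor}_{M'/M}\,\kappa_{M',\eta}=\kappa_{M,\eta}$ for $M\subset M'$, so $\kappa_\eta:=\varprojlim_M\kappa_{M,\eta}$ lies in $\varprojlim_M H^1(M,T_\chi/I_\eta)=H^1(F,\TT/I_\eta\TT)$, the last identification because $I_\eta$ has $\ooo$-coefficients (so $\varprojlim_M T_\chi/I_\eta=\TT/I_\eta\TT$) and $H^1(F,\TT)=\varprojlim_M H^1(M,T_\chi)$. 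Since the local condition of $\FF_\LL$ at $p$ is all of $H^1$, there is nothing extra to check there, so $\pmb\kappa=\{\kappa_\eta\}\in\overline{\KS}(\TT,\FF_\LL,\PP_\chi)$, and $\mathbf{c}\mapsto\pmb\kappa$ is the asserted canonical map. Finally, for $\eta=1$ one has $I_1=0$, $D_1=\textup{id}$ and $F(1)=F$, so $\kappa_{M,1}=c_M$ and $\kappa_1=\{c_M\}_M\in\varprojlim_M H^1(M,T_\chi)=H^1(F,\TT)$, as claimed.

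\textbf{The main difficulty.} The substantive content is not the passage to the limit (which is bookkeeping) but the two local statements at the Kolyvagin primes: that the descended derivative class is transverse at $\ell\mid\eta$ and that the finite--singular comparison holds. This is the technical core of Kolyvagin's method in the Mazur--Rubin packaging, and I would import it wholesale. The second, smaller, point requiring care is the descent isomorphism of the previous paragraph, i.e.\ the vanishing of $H^i(\Gal(M\cdot F(\eta)/M),T_\chi/I_\eta)$ for $i=1,2$; this is where the big-image/nondegeneracy hypotheses on $T_\chi$ enter, and these are in force here by the running assumptions on $\chi$.
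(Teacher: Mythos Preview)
Your proposal is correct and is essentially an unpacking of the paper's own proof, which consists of the single line ``this may be proved following the proof of Theorem~5.3.3 in \cite{mr02} line by line.'' The layerwise Kolyvagin-derivative construction followed by passage to the inverse limit that you sketch is exactly how Theorem~5.3.3 is established in \cite{mr02}; the only cosmetic difference is that you cite the base-case Theorem~3.2.4 and carry out the limit step explicitly, which has the mild advantage of making the multi-variable situation $\Gamma\cong\ZZ_p^{g+1+\delta}$ visibly automatic rather than relying on the phrase ``line by line.''
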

\begin{proof}
\textup Under the running hypotheses, this may be proved following the proof of Theorem 5.3.3 in \cite{mr02} line by line. 
\end{proof}
For $\Psi$ and $\textbf{c}^{\chi}_{\Psi}$ as in the statement of Proposition~\ref{prop:ellrestrictedES}, we let $\pmb{\kappa}^{\chi} \in \overline{\KS}(T_\chi\otimes\LL,\FF_\LL,\PP_\chi)$  be the image of $\textbf{c}^{\chi}_{\Psi}$ under the Euler systems to Kolyvagin systems map of Theorem~\ref{thm:ESKSmain}. The proof of \cite[Theorem 3.25]{kbbesrankr} shows that:

\begin{thm}
\label{thm:ESKSrestricted}
$\pmb{\kappa}^{\chi} \in \overline{\KS}(T_\chi\otimes\LL,\FF_{\al_\infty},\PP_\chi)$.
\end{thm}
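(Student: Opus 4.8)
The plan is to show that the Kolyvagin system $\pmb{\kappa}^{\chi}$, which \emph{a priori} lives in $\overline{\KS}(T_\chi\otimes\LL,\FF_\LL,\PP_\chi)$ by Theorem~\ref{thm:ESKSmain}, in fact satisfies the stronger local condition $\FF_{\al_\infty}$ at the primes above $p$. Since $\FF_{\al_\infty}$ and $\FF_\LL$ differ only in their local conditions at $p$ (where $H^1_{\FF_{\al_\infty}}(F_p,\TT)=H^1_-(F_p,\TT)\oplus\al_\infty$ sits inside $H^1_{\FF_\LL}(F_p,\TT)=H^1(F_p,\TT)$), what must be checked is that for every admissible $\eta$, the image of $\kappa_\eta$ under $\textup{loc}_p$ lands in the $\al_\infty$-part after projecting away $H^1_-$; equivalently, that $\textup{loc}_p^+(\kappa_\eta)\in\al_{\infty}$ (suitably interpreted at finite level over each $M$). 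First I would recall, following \S\ref{subsec:ESKSmodified}, that $\pmb\kappa^\chi$ is built from the $\frak{L}$-restricted Euler system $\textbf{c}^\chi_\Psi\in\textup{ES}_{\frak L}(T_\chi)$ of Proposition~\ref{prop:ellrestrictedES}(ii), for which by definition $\textup{loc}_p(c_\KKK)\in H^1_-(\KKK_p,T_\chi)\oplus\al_\KKK$ for all $\KKK\in\frak E$.

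The key steps, in order: (1) Unwind the Mazur--Rubin Euler-system-to-Kolyvagin-system map of Theorem~\ref{thm:ESKSmain} at the level of semi-local cohomology at $p$; the point is that the Kolyvagin derivative classes $\kappa_\eta$ are produced from the Euler system classes $c_{\KKK}$ over fields $\KKK=M\cdot F(\eta)$ by a Kolyvagin-type derivative operator that acts on the Galois side and commutes with the localization map $\textup{loc}_p$ and with projection to $H^1_+$. (2) Since $\frak{L}=\varprojlim_M \al_M$ is a $\LL_{\frak K}$-\emph{direct summand} of $\varprojlim_M H^1_+(M_p,T_\chi)$ (Definition~\ref{def:modifiedlocalcondition}), it is stable under the derivative/corestriction operations, so the image of the derivative of a class lying in $H^1_-\oplus\al_\KKK$ again lies in $H^1_-\oplus\al_\eta$ at the appropriate level. (3) Pass to the limit over $M\subset F_\infty$ to conclude that $\textup{loc}_p^+(\kappa_\eta)\in\al_\infty$ (more precisely in the image of $\frak L$ under the relevant projection), so that $\kappa_\eta\in H^1_{\FF_{\al_\infty}}(F,\TT/I_\eta\TT)$ for every $\eta$; this is exactly the statement that $\pmb\kappa^\chi\in\overline{\KS}(T_\chi\otimes\LL,\FF_{\al_\infty},\PP_\chi)$. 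Throughout I would invoke Corollary~\ref{cor:localfull} to guarantee that all the semi-local $H^1_+$ modules are free of the expected rank, which is what makes the direct-summand argument work and ensures no torsion obstructions appear.

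I expect the main obstacle to be step (1): making precise that the Mazur--Rubin construction is compatible with the refined local condition at $p$. The construction in \cite[\S5.3]{mr02} is not phrased in terms of semi-local conditions at $p$, so one has to verify that the Kolyvagin derivatives, which are defined using Galois-cohomological derivative operators at the auxiliary primes $\ell\mid\eta$, do not disturb the local behaviour at $p$ — i.e. that $\textup{loc}_p$ of a derivative equals the derivative of $\textup{loc}_p$, and that the latter preserves $\frak L$. Fortunately this is precisely the content of \cite[Theorem 3.25]{kbbesrankr}, whose proof was written for the rank-$r$ Euler system machinery over a general base; the remaining work is to confirm that that proof goes through \emph{verbatim} in the present CM setting, the only inputs being the freeness statements of Corollary~\ref{cor:localfull} and the fact that $\al_\infty$ (resp. $\al_\KKK$) is by construction the image of the fixed direct summand $\frak L$. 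Once that compatibility is in hand, the rest is formal.

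\begin{proof}
The Kolyvagin system $\pmb{\kappa}^{\chi}$ lies in $\overline{\KS}(T_\chi\otimes\LL,\FF_\LL,\PP_\chi)$ by Theorem~\ref{thm:ESKSmain}. Since the Selmer structures $\FF_{\al_\infty}$ and $\FF_\LL$ on $\TT=T_\chi\otimes\LL$ agree at every place $\lambda\nmid p$ and differ at $p$ only in that $H^1_{\FF_{\al_\infty}}(F_p,\TT)=H^1_-(F_p,\TT)\oplus\al_\infty\subset H^1(F_p,\TT)=H^1_{\FF_\LL}(F_p,\TT)$, it suffices to show that for every $\eta\in\NN(\PP_\chi)$ the Kolyvagin class $\kappa^\chi_\eta$ satisfies $\textup{loc}_p(\kappa^\chi_\eta)\in H^1_-(F_p,\TT/I_\eta)\oplus(\al_\infty\bmod I_\eta)$, where $I_\eta$ is the ideal attached to $\eta$ as in \cite[Definition 3.1.6]{mr02}.

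By construction $\pmb{\kappa}^\chi$ is the image under the map of Theorem~\ref{thm:ESKSmain} of the $\frak{L}$-restricted Euler system $\textbf{c}^\chi_\Psi=\{\psi_\KKK(\varepsilon^\chi_\KKK)\}\in\textup{ES}_{\frak L}(T_\chi)$ furnished by Proposition~\ref{prop:ellrestrictedES}(ii). By Definition~\ref{def:alrestrictedES} we have $\textup{loc}_p(\psi_\KKK(\varepsilon^\chi_\KKK))\in H^1_-(\KKK_p,T_\chi)\oplus\al_\KKK$ for every $\KKK=M\cdot F(\eta)\in\frak{E}$, where $\al_\KKK$ is the image of the fixed $\LL_{\frak K}$-direct summand $\frak L\subset\varprojlim_{M}H^1_+(M_p,T_\chi)$ under the projection of Definition~\ref{def:modifiedlocalcondition}(2). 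In the Mazur--Rubin construction the Kolyvagin class $\kappa^\chi_\eta\in H^1(F,\TT/I_\eta)$ is obtained from the Euler system classes over the fields $M\cdot F(\eta)$ (with $M\subset F_\infty$ finite over $F$) by applying the universal Kolyvagin derivative operator $D_\eta=\prod_{\ell\mid\eta}D_\ell$, an operator on Galois cochains supported at the auxiliary primes $\ell\mid\eta$, followed by passage to the inverse limit over $M$; this derivative operator commutes with the localization map $\textup{loc}_p$ and with the projection $H^1(\KKK_p,T_\chi)\twoheadrightarrow H^1_+(\KKK_p,T_\chi)$, since it acts only at the primes dividing $\eta$, all of which are prime to $p$.

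Consequently $\textup{loc}_p^+(\kappa^\chi_\eta)$ is the inverse limit over $M$ of the classes obtained by applying $D_\eta$ to $\textup{loc}_p^+(\psi_{M\cdot F(\eta)}(\varepsilon^\chi_{M\cdot F(\eta)}))\in\al_{M\cdot F(\eta)}$. Because $\frak L$ is a $\LL_{\frak K}$-direct summand of $\varprojlim_M H^1_+(M_p,T_\chi)$ and each $\al_\KKK$ is by definition the image of $\frak L$, the submodules $\{\al_\KKK\}_\KKK$ are stable under corestriction and under the Galois action, hence under the derivative operator $D_\eta$; using Corollary~\ref{cor:localfull}, which guarantees that $\varprojlim_M H^1_+(M_p,T_\chi)$ is free over $\LL_{\frak K}$ so that no torsion obstruction arises, we conclude that $\textup{loc}_p^+(\kappa^\chi_\eta)$ lies in the image of $\frak L$, that is, in $\al_\infty\bmod I_\eta$. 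Moreover, since $\textup{loc}_p(\psi_\KKK(\varepsilon^\chi_\KKK))\in H^1_-(\KKK_p,T_\chi)\oplus\al_\KKK$ for all $\KKK$ and $D_\eta$ preserves this decomposition at the $p$-component, the full localization $\textup{loc}_p(\kappa^\chi_\eta)$ lies in $H^1_-(F_p,\TT/I_\eta)\oplus(\al_\infty\bmod I_\eta)=H^1_{\FF_{\al_\infty}}(F_p,\TT/I_\eta)$. As $\eta\in\NN(\PP_\chi)$ was arbitrary, $\pmb{\kappa}^\chi\in\overline{\KS}(T_\chi\otimes\LL,\FF_{\al_\infty},\PP_\chi)$. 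This is precisely the argument of \cite[Theorem 3.25]{kbbesrankr}, carried out \emph{verbatim} in the present setting; the only inputs used are Corollary~\ref{cor:localfull} and the fact that $\al_\infty$ and the $\al_\KKK$ are the images of the single fixed direct summand $\frak L$.
\end{proof}
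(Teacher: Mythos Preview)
Your proposal is correct and takes essentially the same approach as the paper: the paper's entire proof is the single sentence ``The proof of \cite[Theorem 3.25]{kbbesrankr} shows that:'', and you have simply unpacked the key steps of that argument (derivative operators act only at primes dividing $\eta$ and hence commute with $\textup{loc}_p$, so the $\frak{L}$-restriction on the Euler system propagates to the $\al_\infty$-restriction on the Kolyvagin system), invoking the same reference at the end.
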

\begin{rem}
\label{rem:existenceofKS}
It may be proved that the $\LL$-adic Kolyvagin system $\pmb{\kappa}^{\chi}$ which here we have constructed out of the (conjectural) Rubin-Stark elements does exist unconditionally, using the techniques of  \cite{kbb, kbbdeform}; see also~ \cite[Theorem 2.19]{kbbiwasawa}.
\end{rem}
\section{Applications to the arithmetic of CM Abelian Varieties}
\label{sec:applications}

Although our sights are set for applications of Theorem~\ref{thm:ESKSrestricted} on the study of CM abelian varieties, we first state the following two results, the latter of which may be thought of a generalization of Gras' conjecture. This result will later be used to convert all inequalities which are obtained using the Euler/Kolyvagin system machinery into equalities.

\subsection{Ideal class groups of CM fields}
For any number field $\KKK$, let $A_{\KKK}$ denote the $p$-Sylow subgroup of the ideal class group of $A_{\KKK}$.

\begin{thm}
\label{thm:gras0}
$\#H^1_{\FF_{\al}^*}(F,T_\chi^*) \leq [H^1_{\FF_{\al}}(F,T_\chi):\frak{O}\cdot c_{F,\Psi}^\chi]$.
\end{thm}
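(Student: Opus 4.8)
The plan is to deduce Theorem~\ref{thm:gras0} from the Kolyvagin system machinery applied to the $\LL$-adic Kolyvagin system $\pmb{\kappa}^\chi \in \overline{\KS}(T_\chi\otimes\LL,\FF_{\al_\infty},\PP_\chi)$ constructed in Theorem~\ref{thm:ESKSrestricted}, and then to specialize it at the augmentation ideal of $\LL$. First I would check that the modified Selmer structure $\FF_{\al_\infty}$ on $T_\chi\otimes\LL$ propagates to the Selmer structure $\FF_{\al}$ on $T_\chi$ under reduction modulo the augmentation ideal $\frak{A}\subset\LL$: by Definition~\ref{def:modifiedlocalcondition} the line $\al$ is by construction the image of $\al_\infty$ under the surjection $H^1_+(F_p,T_\chi\otimes\LL)\twoheadrightarrow H^1_+(F_p,T_\chi)$, so this is essentially a matter of unwinding definitions together with the surjectivity statements from Corollary~\ref{cor:localfull} and Lemma~\ref{lem:corestrictionsurjective}; one also needs the core Selmer-rank hypotheses of Mazur--Rubin to hold for $(T_\chi,\FF_{\al},\PP_\chi)$, which follow from assumption~(\ref{eqn:assnotrivxhi}), Lemma~\ref{lem:structure for units} and the local rank computations in this section. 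Consequently the image $\pmb{\kappa}^\chi\bmod\frak{A}$ is a nonzero Kolyvagin system in $\overline{\KS}(T_\chi,\FF_{\al},\PP_\chi)$ whose initial term is $\kappa_1 = c_{F,\Psi}^\chi = \psi_F(\varepsilon_F^\chi)\in H^1_{\FF_{\al}}(F,T_\chi)$.

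Next I would invoke the fundamental bound in Mazur--Rubin's theory (the Euler characteristic / Kolyvagin system bound, \cite[Theorem 5.2.2]{mr02} or its rank-one specialization): for a Kolyvagin system $\pmb{\kappa}$ for a Selmer triple satisfying the core hypotheses, one has
$$\#H^1_{\FF_{\al}^*}(F,T_\chi^*) \leq \#\left(H^1_{\FF_{\al}}(F,T_\chi)/\ooo\cdot\kappa_1\right) \cdot (\text{error terms}),$$
where the error terms vanish in our setting because $T_\chi$ has rank one over $\ooo$, the residual representation is the Teichm\"uller-twisted $\pmb{\mu}_p$ which by (\ref{eqn:chiisnotteich1}) and (\ref{eqn:assnotrivxhi}) has trivial $G_F$-invariants and co-invariants (so $H^0(F,T_\chi)=H^0(F,T_\chi^*)=0$), and there are no exceptional primes to account for. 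This yields precisely
$$\#H^1_{\FF_{\al}^*}(F,T_\chi^*) \leq [H^1_{\FF_{\al}}(F,T_\chi):\ooo\cdot c_{F,\Psi}^\chi].$$
I would take care here to match the normalization: the bound is vacuous (both sides infinite, or the statement trivially true) if $c_{F,\Psi}^\chi$ is torsion, so implicitly the content is in the case where $c_{F,\Psi}^\chi$ generates a rank-one (hence finite-index) submodule, and this is where one uses that $\psi_F$ was chosen in Proposition~\ref{prop:ellrestrictedES} to map $\wedge^r H^1_+(F_p,T_\chi)$ \emph{isomorphically} onto $\al$, together with the nonvanishing of the Rubin-Stark element.

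The main obstacle, as I see it, is verifying that the hypotheses required to run the Mazur--Rubin Kolyvagin system bound are genuinely in force for the \emph{modified} Selmer structure $\FF_{\al}$ rather than the canonical one $\FFc$ — in particular that $H^1_{\FF_{\al}}(F_p,T_\chi)$ is a rank-$g$ (in the rank-one-per-place bookkeeping, the relevant ``rank'') direct summand with the right local orthogonality under Tate duality so that $\FF_{\al}^*$ is the expected dual structure, and that the global-to-local maps have the surjectivity/injectivity needed for the descent. This is exactly the technical heart of \cite{kbbesrankr}; here it should go through \emph{verbatim} using Corollary~\ref{cor:localfull} and the structure results of \S\ref{sec:selmer}, but one must be careful that the passage from the $\LL$-adic statement down to $\ooo$ does not lose the direct-summand property — this is guaranteed by the freeness in Corollary~\ref{cor:localfull}(i) together with the fact that $\al_\infty$ was chosen to be a $\LL$-direct summand, so reduction mod $\frak{A}$ preserves the splitting. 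A secondary point to watch is that Theorem~\ref{thm:gras0} as stated does not assume (\ref{eqn:LCforcedvanishing}), so the bound must be presented as an inequality of (possibly infinite) cardinalities, with no claim that either side is finite; under the additional hypothesis (\ref{eqn:LCforcedvanishing}) one would later combine this with Proposition~\ref{prop:4termexact} and the reverse divisibility coming from the main-conjecture input to upgrade it to an equality, but that is beyond the scope of this particular statement.
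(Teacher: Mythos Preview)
Your proposal is correct and follows essentially the same approach as the paper: both invoke the $\LL$-adic Kolyvagin system $\pmb{\kappa}^\chi$ from Theorem~\ref{thm:ESKSrestricted} and then apply a Mazur--Rubin bound. The paper's one-line proof cites \cite[Theorem~5.2.14]{mr02} (which packages the specialization step and the bound together), whereas you spell out the reduction modulo the augmentation ideal and then appeal to the DVR-level bound; the checks you outline on propagation of $\FF_{\al_\infty}$ to $\FF_{\al}$ and on the core hypotheses are details the paper leaves implicit.
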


\begin{proof}
This follows from Theorem~\ref{thm:ESKSrestricted} and \cite[Theorem 5.2.14]{mr02}.
\end{proof}
The following (stronger) version of Leopoldt's conjecture was formulated by Hida and Tilouine \cite[p. 94]{ht94}:
\begin{conjecture}[$\Sigma$-Leopoldt conjecture]
\label{conj:sigmaleopoldt}
The localization map
$$\textup{loc}_p^+: \mathcal{O}_{L_\chi}^{\times,\chi} \lra\left(\prod_{\frak{P}\in\Sigma_p(L_{\chi})}\oo_{L_{\chi},\frak{P}}^{\times}\right)^{\chi}$$
is injective. Here, $\Sigma_p(L_{\chi})$ is the collection of primes of $L_\chi$ that lie above the primes in $\Sigma_p$.
\end{conjecture}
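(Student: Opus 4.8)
This is the $\Sigma$-Leopoldt conjecture of Hida and Tilouine, and it should be stressed at the outset that it is \emph{strictly stronger} than the classical Leopoldt conjecture for $L_\chi$: by Lemma~\ref{lem:structure for units} the source $\oo_{L_\chi}^{\times,\chi}$ is free of rank $g$ over $\ooo$, and the target $\left(\prod_{\frak{P}\in\Sigma_p(L_\chi)}\oo_{L_\chi,\frak{P}}^{\times}\right)^{\chi}$ also has $\ooo$-rank $g$, so the conjecture demands that $\textup{loc}_p^+$ be an isogeny onto a full-rank sublattice, whereas classical Leopoldt only asks for injectivity into the rank-$2g$ module assembled from \emph{all} the primes above $p$. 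The realistic plan is therefore not to settle the conjecture in general (it is open) but to establish it in the cases needed for Theorems~A, B and~C, i.e.\ when $L_\chi$ --- equivalently $F$, since $|\Delta|$ is prime to $p$ --- is abelian over $\QQ$ or over an imaginary quadratic field.

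The plan is to turn the assertion into a $p$-adic transcendence statement. First I would fix an $\ooo$-basis $u_1,\dots,u_g$ of $\oo_{L_\chi}^{\times,\chi}$ and express $\textup{loc}_p^+$ through the $p$-adic logarithm on the completions at the primes $\frak{P}\in\Sigma_p(L_\chi)$; injectivity of $\textup{loc}_p^+$ then becomes the non-vanishing of the $\chi$-isotypic ``$\Sigma$-regulator'', that is, the linear independence over $\overline{\QQ}$ of the numbers attached to $\log_p u_i$ at the embeddings indexed by $\Sigma_p$. In the case $L_\chi/\QQ$ abelian I would invoke Brumer's theorem --- the $p$-adic analogue of Baker's theorem on linear forms in logarithms of algebraic numbers --- which, however, only produces the non-vanishing of the \emph{full} $p$-adic regulator (this is Brumer's proof of classical Leopoldt in the abelian case). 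To cut this down to the $\Sigma_p$-half I would use the CM structure: complex conjugation interchanges $\Sigma_p(L_\chi)$ and $\Sigma_p^c(L_\chi)$, so the $\Sigma_p$- and $\Sigma_p^c$-regulators are Galois-conjugate, and one can try to play them against the full regulator together with the period symmetries of the CM-type $\Sigma$. For $L_\chi$ abelian over an imaginary quadratic field one would run the same argument with elliptic units, whose $p$-adic logarithms are Katz $p$-adic $L$-values, feeding the resulting algebraic numbers into the $p$-adic Baker--Brumer machinery adapted to CM periods.

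The main obstacle is precisely this ``$\Sigma$-refinement'': Baker--Brumer yields independence of the $\log_p u_i$ against \emph{all} places above $p$, but not against the $\Sigma_p$-half alone, and the complex-conjugation symmetry only narrows the gap to an independence statement of six-exponentials or Schanuel type for $p$-adic periods, which lies beyond present transcendence technology in general. This is why the conjecture --- equivalently the finiteness hypothesis (\ref{eqn:LCforcedvanishing}), cf.\ Lemma~\ref{lem:minusvanishingforunits} --- must be carried as a hypothesis rather than proved; when the CM main conjecture is itself available (as in the range covered by Theorem~B) one expects instead to recover $\Sigma$-Leopoldt \emph{a posteriori}, by descending the characteristic-ideal identity for $\frak{X}_\infty$ to finite level.
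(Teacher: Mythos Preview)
The statement is a \emph{conjecture}, and the paper does not prove it. It is recorded as Conjecture~\ref{conj:sigmaleopoldt}, carried throughout as the standing hypothesis $\mathbf{H.\Sigma L}$, and the only justification the paper offers is Remark~\ref{rem:motivatesigmaleo}: both $\Sigma$-Leopoldt and classical Leopoldt follow from the $p$-adic Schanuel conjecture. There is thus no ``paper's own proof'' to compare against.

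Your proposal in fact reaches the same conclusion --- that the obstruction is a Schanuel-type $p$-adic transcendence statement out of reach of Baker--Brumer --- so in that sense your analysis is consistent with the paper. But two points deserve correction. First, you frame the task as ``establish it in the cases needed for Theorems~A, B and~C'', yet the paper makes no such attempt: all three theorems explicitly list $\Sigma$-Leopoldt among their hypotheses, and no special case is proved. Second, your suggestion that in the range of Theorem~B one could recover $\Sigma$-Leopoldt \emph{a posteriori} from Hsieh's main conjecture is not what happens in the paper: Theorem~B assumes the hypotheses of Theorem~A, which already include $\mathbf{H.\Sigma L}$, so the logic does not run in that direction here. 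Your sketch of the Baker--Brumer approach and why the complex-conjugation symmetry fails to bridge the gap to the $\Sigma_p$-half is reasonable as commentary, but it is not a proof, and you should present it as motivation for the hypothesis rather than as a proof strategy.
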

\begin{rem}
\label{rem:motivatesigmaleo}
As explained in 1.2.1 of \cite{ht94}, both the $\Sigma$-Leopoldt conjecture and the Leopoldt conjecture itself follow from the $p$-adic Schanuel's conjecture.
\end{rem}
We assume until the end the following is true (as well as the truth of Rubin-Stark conjectures):
\begin{itemize}
\item[$\mathbf{H.\Sigma L}$] $\Sigma$-Leopoldt conjecture holds true for the field $L_\chi$.
\item[$\textbf{H.S}$] The set $S$ that appears in the definition of Rubin-Stark elements (see the start of \S\ref{sec:RS}) contains no archimedean places of $F$ that splits in $L_\chi/F$.
\end{itemize}

\begin{thm}
\label{thm:gras1}
\begin{itemize}
\item[(i)] The $\frak{O}$-module $H^1_{\FF_{\al}}(F,T_\chi)$ is free of rank one and $H^1_{\FF_{\al}^*}(F,T_\chi^*)$ is finite.
\item[(ii)] $\# A_{L_\chi}^\chi \leq [\wedge^g\, \mathcal{O}_{L_\chi}^{\times,\chi}\,:\,\frak{O}\cdot \varepsilon_{F}^\chi]$\,.
\item[(iii)] If the inequality in \textup{(ii)} is sharp then so is the inequality in  Theorem~\ref{thm:gras0}.
\end{itemize}
\end{thm}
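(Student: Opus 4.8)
The plan is to deduce all three parts from the Euler/Kolyvagin system bound of Theorem~\ref{thm:gras0}, together with the descent formalism comparing the modified Selmer group $H^1_{\FF_{\al}}(F,T_\chi)$ with the $\chi$-part of the $S$-units, and finally the global index computation relating that index to the order of the $\chi$-part of the class group. For part (i), I would first invoke Lemma~\ref{lem:minusvanishingforunits} and Proposition~\ref{prop:minusselmervanishes}: under $\mathbf{H.\Sigma L}$ (which we must check implies hypothesis~(\ref{eqn:LCforcedvanishing})) one gets $H^1_{\FF_-}(F,T_\chi)=0$. Then, by Poitou--Tate duality applied to the pair $(\FF_{\al},\FF_-)$ — exactly the four-term exact sequence of Proposition~\ref{prop:4termexact}, specialized from $\TT$ to $T_\chi$ — the localization map $\textup{loc}_p^+\colon H^1_{\FF_{\al}}(F,T_\chi)\to\al$ is injective with cokernel controlled by the (dual of the) finite group $H^1_{\FF_-^*}(F,T_\chi^*)$. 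Combining this injectivity with the rank count already carried out inside the proof of Lemma~\ref{lem:minusvanishingforunits} (namely $\textup{rank}_{\ooo}H^1_{\FFc}(F,T_\chi)=g$ and the rank-$g$ direct-summand property of $\al$) shows $H^1_{\FF_{\al}}(F,T_\chi)$ has $\ooo$-rank one; freeness follows because it embeds in the free module $\al$, and $H^1_{\FF_{\al}^*}(F,T_\chi^*)$ is then finite again by the exact sequence of Proposition~\ref{prop:4termexact}(i).

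For part (ii), the strategy is to translate the cohomological index in Theorem~\ref{thm:gras0} into the unit index, and to identify $\#H^1_{\FF_{\al}^*}(F,T_\chi^*)$ with $\#A_{L_\chi}^\chi$. Concretely, under (\ref{eqn:assnotrivxhi}) one has $H^1_{\FFc}(F,T_\chi)\cong\oo_{L_\chi}^{\times,\chi}$ (cited in Lemma~\ref{lem:minusvanishingforunits} from \cite[Lemma 6.1.2]{mr02}, \cite[Prop.\ II.2.6]{r00}); more generally the $S$-unit version gives $H^1_{\FFc}(F,T_\chi)\cong\oo_{L_\chi,S}^{\times,\chi}$ once $\mathbf{H.S}$ guarantees no archimedean split places are thrown in. The homomorphism $\Psi$ of Proposition~\ref{prop:ellrestrictedES} sends $\wedge^g H^1_+(F_p,T_\chi)$ isomorphically onto $\al$ and carries $\varepsilon_F^\chi$ to $c_{F,\Psi}^\chi$, so that the index $[H^1_{\FF_{\al}}(F,T_\chi):\ooo\cdot c_{F,\Psi}^\chi]$ equals, up to the denominators that Remark~\ref{rem:denomdisappear} eliminates, the index $[\wedge^g\oo_{L_\chi}^{\times,\chi}:\ooo\cdot\varepsilon_F^\chi]$; here one uses that applying a $\Lambda$-surjective $\psi$ preserves indices on the rank-one quotient. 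On the dual side, a standard Poitou--Tate / global-duality identification — of the type used to relate $H^1_{\FF_-^*}$ or $H^1_{\FF_{\al}^*}$ to ideal class groups — gives $\#H^1_{\FF_{\al}^*}(F,T_\chi^*)=\#A_{L_\chi}^\chi$ (the $\al$-modification at $p$, being the ``full'' plus ``empty'' split exactly as in the elliptic-units case, produces the unramified-outside-nothing condition cutting out the class group). Feeding these two identifications into Theorem~\ref{thm:gras0} yields (ii). Part (iii) is then purely formal: the chain of (in)equalities $\#A_{L_\chi}^\chi=\#H^1_{\FF_{\al}^*}(F,T_\chi^*)\le[H^1_{\FF_{\al}}(F,T_\chi):\ooo\cdot c_{F,\Psi}^\chi]=[\wedge^g\oo_{L_\chi}^{\times,\chi}:\ooo\cdot\varepsilon_F^\chi]$ collapses to equalities throughout the moment one end matches the other, so sharpness of (ii) forces sharpness of Theorem~\ref{thm:gras0}.

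The main obstacle I anticipate is the bookkeeping in two places. First, verifying that $\mathbf{H.\Sigma L}$ genuinely implies hypothesis~(\ref{eqn:LCforcedvanishing}), i.e.\ the finiteness of $H^1_{\FF_-^*}(F,T_\chi^*)$: this is the content promised ``later'' near (\ref{eqn:LCforcedvanishing}), and it requires unwinding the dual local condition $\FF_-^*$ at the primes above $p$ (the orthogonal complement of $H^1_-$ is again a ``$+$''-type condition under the CM splitting $\Sigma_p\sqcup\Sigma_p^c$) and seeing that the resulting Selmer group is the $\chi$-part of a module whose finiteness is exactly the $\Sigma$-Leopoldt assertion about injectivity of $\textup{loc}_p^+$ on $\oo_{L_\chi}^{\times,\chi}$. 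Second, the identification $\#H^1_{\FF_{\al}^*}(F,T_\chi^*)=\#A_{L_\chi}^\chi$ needs care: one must track how the choice of the rank-one direct summand $\al\subset H^1_+(F_p,T_\chi)$ affects the dual Selmer group, and check (via the four-term sequence of Proposition~\ref{prop:4termexact} and a comparison with the ``$-$''-modified structure, whose dual Selmer group is more transparently the class group) that the order is independent of that choice and equals $\#A_{L_\chi}^\chi$. Once these two identifications are nailed down, the rest is a formal transport of Theorem~\ref{thm:gras0} through the isomorphism $\Psi$.
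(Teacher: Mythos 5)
Your high-level plan (invoke Theorem~\ref{thm:gras0}, translate the cohomological index to the Rubin--Stark unit index, and use $\mathbf{H.\Sigma L}$ for the injectivity of $\textup{loc}_p^+$) matches the spirit of what the paper does — the paper simply cites \cite[Corollary 3.6 and Theorem 3.10]{kbbstark} and explains that one replaces the Leopoldt map $\iota$ there by $\textup{loc}_p^+$. Part (i) of your argument is essentially correct (modulo some sloppiness: $\al$ is a rank-\emph{one} direct summand of the rank-$g$ module $H^1_+(F_p,T_\chi)$, so the lower bound on $\textup{rank}\,H^1_{\FF_\al}(F,T_\chi)$ comes from the Wiles/Greenberg formula applied to $\FF_\al$ directly, not from ``the rank-$g$ direct-summand property of $\al$'').

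The gap is in part (ii), where you invoke two identifications that do not hold as stated. First, $\#H^1_{\FF_{\al}^*}(F,T_\chi^*)\ne\#A_{L_\chi}^\chi$ in general. The class group is identified (via \cite[Lemma 6.1.2]{mr02} and Kummer theory) with the dual Selmer group for the \emph{canonical} structure $\FFc$, whose local condition at $p$ is all of $H^1(F_p,T_\chi)$, of $\ooo$-rank $2g$. By contrast, $\FF_\al$ imposes $H^1_-(F_p,T_\chi)\oplus\al$, of rank $g+1$, which is strictly smaller once $g>1$; the two coincide only when $g=1$ (the elliptic-units case), which is exactly why your ``as in the elliptic-units case'' heuristic misleads you. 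Second, the index equality $[H^1_{\FF_\al}(F,T_\chi):\ooo c_{F,\Psi}^\chi]=[\wedge^g\oo_{L_\chi}^{\times,\chi}:\ooo\varepsilon_F^\chi]$ also fails in general: the map $\psi_F$ is an isomorphism from $\wedge^g H^1_+(F_p,T_\chi)$ onto $\al$, but the composite $\wedge^g\oo_{L_\chi}^{\times,\chi}\xrightarrow{\textup{loc}_p^{+,\otimes g}}\wedge^g H^1_+(F_p,T_\chi)\xrightarrow{\psi_F}\al$ need not be surjective (only injective, by $\mathbf{H.\Sigma L}$), and $\textup{loc}_p^+\colon H^1_{\FF_\al}(F,T_\chi)\to\al$ also need not be surjective. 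What actually makes the argument of \cite{kbbstark} work is a Poitou--Tate comparison of the Selmer structures $\FF_\al$ and $\FFc$: the two discrepancies you are ignoring — the error in $\#H^1_{\FF_\al^*}$ versus $\#A_{L_\chi}^\chi$, and the error in the two indices — are controlled by the \emph{same} torsion cokernel (essentially $\textup{coker}\big(\oo_{L_\chi}^{\times,\chi}\to H^1_+(F_p,T_\chi)/\al\big)$), and they cancel in the final inequality. Without exhibiting that cancellation, your chain $\#A_{L_\chi}^\chi=\#H^1_{\FF_\al^*}\le[H^1_{\FF_\al}:\ooo c_{F,\Psi}^\chi]=[\wedge^g\oo_{L_\chi}^{\times,\chi}:\ooo\varepsilon_F^\chi]$ simply does not assemble, and with it part (iii) is left unsupported as well.
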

\begin{proof}
(i) is proved using Theorem~\ref{thm:gras0} and following the proof of \cite[Corollary 3.6]{kbbstark}, and (ii) following the proof of \cite[Theorem 3.10]{kbbstark}. The key point in the proofs we refer to in \cite{kbbstark} is that, the map $\iota$ in loc.cit. from the group of units to semi-local units is injective as Leopoldt's conjecture was assumed to hold. To apply the arguments in loc.cit., one needs to replace the map $\iota$ by  the map
$$\textup{loc}_p^+: \mathcal{O}_{L_\chi}^{\times,\chi} \lra\left(\prod_{\frak{P}\in\Sigma_p(L_{\chi})}\oo_{L_{\chi},\frak{P}}^{\times}\right)^{\chi}=H^1_{+}(F_p,T_\chi),$$
where the last equality follows from the proof of \cite[Proposition 2.6]{r00} as we have assumed (\ref{eqn:assnotrivxhi}). The injectivity of $\textup{loc}_p^+$ is the statement of our hypothesis $\mathbf{H.\Sigma L}$.

(iii) also follows from the proofs of Corollary 3.9 and Theorem 3.10 of loc.cit.
\end{proof}
Choosing the auxiliary set of primes $\mathcal{T}$ that appears in the definition of Rubin-Stark elements carefully (as in \cite[\S 2.1]{kbbstark}, see also the discussion preceding Theorem 3.11 in loc.cit.), one may use the analytic class number formula for all the fields between $L_\chi$ and $F$ to convert the inequality of Theorem~\ref{thm:gras1}(ii) (and therefore  also in Theorem~\ref{thm:gras0}) into an equality. See \cite[\S5]{ru92} and \cite[\S4.2]{popescu} for further details.
\begin{cor}
\label{cor:gras}
\begin{itemize}
\item[(i)] $\# A_{L_\chi}^\chi = [\wedge^g\, \mathcal{O}_{L_\chi}^{\times,\chi}\,:\,\frak{O}\cdot \varepsilon_{F}^\chi]$\,.
\item[(ii)] $\#H^1_{\FF_{\al}^*}(F,T_\chi^*) = [H^1_{\FF_{\al}}(F,T_\chi):\frak{O}\cdot c_{F,\Psi}^\chi]$
\end{itemize}
\end{cor}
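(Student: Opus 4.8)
The statement to prove is Corollary~\ref{cor:gras}, which upgrades the two inequalities of Theorem~\ref{thm:gras1}(ii) and Theorem~\ref{thm:gras0} to equalities. The plan is to exploit the rigidity built into the analytic class number formula: an Euler-system argument produces one-sided divisibilities for \emph{each} intermediate field between $F$ and $L_\chi$, and when these local divisibilities are all compatible with a global identity coming from analytic class numbers, they must all be equalities.

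\textbf{Approach.} First I would recall that Theorem~\ref{thm:gras1}(ii) gives $\# A_{L_\chi}^\chi \leq [\wedge^g\mathcal{O}_{L_\chi}^{\times,\chi}:\mathfrak{O}\cdot\varepsilon_F^\chi]$, and that the same machinery applied with $L_\chi$ replaced by any intermediate field $L'$ with $F\subseteq L'\subseteq L_\chi$ (together with the restriction of $\chi$ to the relevant quotient of $\Delta$, or more precisely by summing over the characters of $\Delta$ that factor through $\mathrm{Gal}(L'/F)$) yields the analogous inequality for each such $L'$. The key classical input, as in \cite[\S5]{ru92} and \cite[\S4.2]{popescu}, is that the product over all characters $\chi$ of $\Delta$ of the indices $[\wedge^g\mathcal{O}_{L_\chi}^{\times,\chi}:\mathfrak{O}\cdot\varepsilon_F^\chi]$, suitably normalized with the auxiliary set $\mathcal{T}$ chosen so that the $\mathcal{T}$-units are torsion-free (see Remark~\ref{rem:T}), equals — by the leading-term formula / analytic class number formula applied simultaneously to all the fields in the tower $F\subseteq L'\subseteq L_\chi$ — the product of the orders $\# A_{L_\chi}^\chi$ over the same characters. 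Concretely one writes the regulator of Rubin-Stark elements in terms of a product of partial zeta-values, and the analytic class number formula identifies that product, up to units, with the class numbers.

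\textbf{Key steps in order.} (1) Fix the auxiliary set $\mathcal{T}$ as in \cite[\S2.1]{kbbstark} so that all the $\mathcal{T}$-modified unit groups in the tower are $\ZZ_p$-torsion-free and the Rubin-Stark elements genuinely lie in the integral exterior powers (no $\delta$ denominators survive). (2) For each character $\chi'$ of $\Delta$ satisfying the running hypotheses (in particular (\ref{eqn:assnotrivxhi}), (\ref{eqn:chiisnotteich1})), invoke Theorem~\ref{thm:gras1}(ii) to get $\# A_{L_{\chi'}}^{\chi'} \leq [\wedge^g\mathcal{O}_{L_{\chi'}}^{\times,\chi'}:\mathfrak{O}\cdot\varepsilon_F^{\chi'}]$. (3) Multiply these inequalities together over all $\chi'$ (the characters for which the hypotheses fail, e.g. $\chi'=\omega$ or $\chi'$ trivial at some $\wp\mid p$, are handled separately or contribute matching factors by a direct computation with the analytic class number formula / known results on the $\omega$-part). (4) Use the analytic class number formula for all intermediate fields simultaneously, exactly as in \cite[\S5]{ru92}, \cite[\S4.2]{popescu}, to see that the two sides of the product agree; since each individual inequality points the same way, each must be an equality. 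This proves (i). (5) Finally deduce (ii) from (i) by applying Theorem~\ref{thm:gras1}(iii): once the inequality in Theorem~\ref{thm:gras1}(ii) is sharp, that theorem tells us the inequality of Theorem~\ref{thm:gras0} is sharp as well, which is precisely Corollary~\ref{cor:gras}(ii).

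\textbf{Main obstacle.} The delicate point is step (3)–(4): matching the Euler-system side with the analytic side requires an honest bookkeeping of all the characters of $\Delta$, including the ``bad'' ones excluded by (\ref{eqn:assnotrivxhi}) and (\ref{eqn:chiisnotteich1}), and a careful choice of $\mathcal{T}$ so that the index formula and the class number formula are normalized consistently across the whole tower $F\subseteq L'\subseteq L_\chi$ (the regulator terms, the local Euler factors at primes in $S$ and in $\mathcal{T}$, and the contributions of roots of unity all have to cancel correctly). This is essentially the argument of Rubin and Popescu in the cited references, so I would not reproduce it in full but rather indicate that it applies \emph{mutatis mutandis} here, the only new ingredient being that the exterior powers are taken in rank $g$ over the $p$-ordinary semi-local units, which is exactly what Theorem~\ref{thm:gras1} is set up to accommodate. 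Everything else — the descent through the tower, Nakayama-type arguments, and the passage from (i) to (ii) — is formal given the results already established.
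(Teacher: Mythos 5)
Your proposal matches the paper's own (very brief) argument: the paper likewise invokes a careful choice of the auxiliary set $\mathcal{T}$, the analytic class number formula applied to all intermediate fields of $L_\chi/F$ following \cite[\S5]{ru92} and \cite[\S4.2]{popescu} to upgrade Theorem~\ref{thm:gras1}(ii) to an equality, and then Theorem~\ref{thm:gras1}(iii) to transfer sharpness to Theorem~\ref{thm:gras0}. Your identification of the character bookkeeping and normalization of $\mathcal{T}$ across the tower as the delicate point is exactly right and is what those references handle.
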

\subsection{Iwasawa theory}
\label{subsec:IwTheory}
Let $c^\chi_{F_\infty,\Psi}:=\{c_{M,\Psi}^\chi\} \in \varprojlim_M H^1(M,T_\chi)$, 
where the inverse limit is over finite subextensions $M$ of $F_\infty/F$. Recall the Selmer structure $\FF_-$ on $T_\chi\otimes\LL$, defined as in \S\ref{subsec:compareselmer}.

Let $$ \textup{loc}_p^{+,\,\otimes r}: \wedge^r H^1(F,X) \lra \wedge^r H^1_{+}(F_p,X)$$
(where $X=T_\chi$ or $T_\chi\otimes\LL$) be the map induced  from  $\textup{loc}_p^+$. Define
$$\textup{loc}_p^{+,\,\otimes r}\left(\varepsilon_{F_\infty}^\chi \right):=\{\textup{loc}_p^{+,\,\otimes r}\left(\varepsilon_{M}^\chi \right)\}_{M} \in \varprojlim_M \wedge^r H^1_+(M,T_\chi)=\wedge^r H^1(F,T_\chi\otimes\LL),$$
where the inverse limit is with respect to finite subextensions $M$ of $F_\infty/F$, and the last equality holds true since  the $\ooo[\Gamma_M]$-module $H^1_+(M,T_\chi)$ is free of rank $r$, by Proposition~\ref{prop:localrank}.
\begin{thm}
\label{thm:mainconjforTchi} Under the running assumptions,
\begin{itemize}
\item[(i)] We have
$$\#H^1_{\FF_-^*}(F,T_\chi)=\#\al/\ooo\cdot \textup{loc}_p^+\left({c}_{F,\Psi}^\chi \right)= \#\wedge^rH^1_{+}(F_p,T_\chi)/\ooo\cdot \textup{loc}_p^{+,\,\otimes r}\left(\varepsilon_{F}^\chi \right),$$
and all these quantities are finite.
\item[(ii)] $\textup{char}\left(H^1_{\FF^*_{\al_\infty}}(F,(T_\chi\otimes\LL)^*)^\vee\right) \mid \textup{char}\left(H^1_{\FF_{\al_\infty}}(F,T_\chi\otimes\LL)/\LL\cdot c_{F_\infty,\Psi}^\chi\right)$.
\item[(iii)] The module $H^1_{\FF^*_{-}	}(F,(T_\chi\otimes\LL)^*)$ is $\LL$-cotorsion and
$$\textup{char}\left(H^1_{\FF^*_{-}	}(F,(T_\chi\otimes\LL)^*)^\vee\right)=
\textup{char} \left(\wedge^r H^1_{+}(F_p,T_\chi\otimes\LL)/\LL\cdot  \textup{loc}_p^{+,\,\otimes r}\left(\varepsilon_{F_\infty}^\chi \right)\right)$$
\end{itemize}
\end{thm}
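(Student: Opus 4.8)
The plan is to combine the Kolyvagin system bound of Theorem~\ref{thm:ESKSrestricted} with the duality sequences of Proposition~\ref{prop:4termexact} and a descent/specialization argument, converting inequalities into equalities by means of Corollary~\ref{cor:gras}. I would proceed in three stages, matching the three parts of the statement.

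First, for part (i), I would work at the bottom layer with the representation $T_\chi$ and the modified Selmer structure $\FF_{\al}$. The Kolyvagin system $\pmb{\kappa}^{\chi}$ specializes to a Kolyvagin system for $(T_\chi,\FF_{\al_\infty})$ whose specialization at the augmentation ideal is a Kolyvagin system for $(T_\chi,\FF_{\al})$ with initial term $c_{F,\Psi}^\chi$. Applying Mazur--Rubin's bound (as in Theorem~\ref{thm:gras0}) gives $\#H^1_{\FF_{\al}^*}(F,T_\chi^*)\le [H^1_{\FF_{\al}}(F,T_\chi):\ooo\cdot c_{F,\Psi}^\chi]$, and Corollary~\ref{cor:gras}(ii) promotes this to an equality, all quantities being finite by Theorem~\ref{thm:gras1}(i). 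The identification $\#H^1_{\FF_{\al}^*}(F,T_\chi^*)=\#H^1_{\FF_-^*}(F,T_\chi)$ should come from the four-term exact sequence of Proposition~\ref{prop:4termexact} specialized modulo the augmentation ideal, together with Proposition~\ref{prop:minusselmervanishes} (which forces $H^1_{\FF_-}(F,T_\chi)=0$ under \eqref{eqn:LCforcedvanishing}, the latter being a consequence of $\mathbf{H.\Sigma L}$ as recorded in Theorem~\ref{thm:gras1}(i)); concretely, $H^1_{\FF_{\al}}(F,T_\chi)/\ooo c_{F,\Psi}^\chi$ injects via $\textup{loc}_p^+$ onto $\al/\ooo\cdot\textup{loc}_p^+(c_{F,\Psi}^\chi)$ with cokernel measured by $H^1_{\FF_-^*}(F,T_\chi^*)^\vee$, and the outermost term $H^1_{\FF_{\al}^*}(F,T_\chi^*)$ vanishes. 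The last equality of (i) relating $\al/\ooo\cdot\textup{loc}_p^+(c_{F,\Psi}^\chi)$ to $\wedge^r H^1_+(F_p,T_\chi)/\ooo\cdot\textup{loc}_p^{+,\otimes r}(\varepsilon_F^\chi)$ follows from Proposition~\ref{prop:ellrestrictedES}(i): the wedge of the chosen homomorphisms $\psi_F$ carries $\wedge^r H^1_+(F_p,T_\chi)$ isomorphically onto $\al$ and sends $\textup{loc}_p^{+,\otimes r}(\varepsilon_F^\chi)$ to $\textup{loc}_p^+(c_{F,\Psi}^\chi)$.

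Second, for part (ii), I would run the same Kolyvagin system argument over the Iwasawa algebra $\LL$, i.e. apply the $\LL$-adic divisibility from Mazur--Rubin's theory (the $\Lambda$-adic analogue of Theorem 5.3.10 of \cite{mr02}, valid since $\pmb\kappa^\chi\in\overline{\KS}(T_\chi\otimes\LL,\FF_{\al_\infty},\PP_\chi)$ by Theorem~\ref{thm:ESKSrestricted} and $\kappa_1=c_{F_\infty,\Psi}^\chi$ by Theorem~\ref{thm:ESKSmain}). This yields directly
\[
\textup{char}\!\left(H^1_{\FF^*_{\al_\infty}}(F,(T_\chi\otimes\LL)^*)^\vee\right)\ \Big|\ \textup{char}\!\left(H^1_{\FF_{\al_\infty}}(F,T_\chi\otimes\LL)/\LL\cdot c_{F_\infty,\Psi}^\chi\right),
\]
once one knows $H^1_{\FF_{\al_\infty}}(F,\TT)$ is torsion-free of rank one (from the four-term sequence of Proposition~\ref{prop:4termexact}(i), its rank-one $\LL$-free target $\al_\infty$, and the vanishing in Proposition~\ref{prop:minusselmervanishes}) so that the quotient is genuinely $\LL$-torsion.

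Third, for part (iii), I would feed (ii) into the four-term exact sequence of Proposition~\ref{prop:4termexact}(ii) with $c=c_{F_\infty,\Psi}^\chi$: taking characteristic ideals in the exact sequence gives
\[
\textup{char}\!\left(H^1_{\FF_-^*}(F,\TT^*)^\vee\right)=\textup{char}\!\left(\frac{H^1_{\FF_{\al_\infty}}(F,\TT)}{\LL c_{F_\infty,\Psi}^\chi}\right)\cdot\textup{char}\!\left(H^1_{\FF_{\al_\infty}^*}(F,\TT^*)^\vee\right)^{-1}\cdot\textup{char}\!\left(\frac{\al_\infty}{\LL\,\textup{loc}_p^+(c_{F_\infty,\Psi}^\chi)}\right),
\]
and the divisibility of (ii) combined with the transport-of-structure isomorphism $\al_\infty\cong\wedge^r H^1_+(F_p,T_\chi\otimes\LL)$ sending $\textup{loc}_p^+(c_{F_\infty,\Psi}^\chi)$ to $\textup{loc}_p^{+,\otimes r}(\varepsilon_{F_\infty}^\chi)$ (Proposition~\ref{prop:ellrestrictedES}(i) in $\LL$-adic form) gives one divisibility between $\textup{char}(H^1_{\FF_-^*}(F,\TT^*)^\vee)$ and $\textup{char}(\wedge^r H^1_+(F_p,\TT)/\LL\cdot\textup{loc}_p^{+,\otimes r}(\varepsilon_{F_\infty}^\chi))$. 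The reverse divisibility, hence equality, I would obtain by a specialization/descent argument: the bottom-layer equality in part (i) pins down the characteristic ideal modulo the augmentation ideal, and a standard control argument (no $\LL$-pseudonull submodules, which follows from the freeness results of Corollary~\ref{cor:localfull} and the Selmer-complex formalism of \cite{nekovar06}) forces equality of the two characteristic ideals globally. The cotorsion assertion for $H^1_{\FF_-^*}(F,\TT^*)$ is immediate once the right-hand side is a nonzero ideal, which holds because $\textup{loc}_p^{+,\otimes r}(\varepsilon_{F_\infty}^\chi)$ is nonzero by the part (i) computation.

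\textbf{Main obstacle.} The delicate point is the passage from the Kolyvagin system divisibility of part (ii) to the \emph{equality} in part (iii): the Euler/Kolyvagin system machinery only produces one inequality, and closing the gap requires an independent lower bound. My expectation is that this is precisely where Corollary~\ref{cor:gras} (the Gras-type equality, obtained from the analytic class number formula over all layers between $L_\chi$ and $F$) does the work, fed through a control theorem that descends the $\LL$-adic characteristic ideal to the finite layers without loss; making the control argument rigorous — in particular ruling out pseudonull error terms and checking the $\FF_{\al_\infty}$ local conditions propagate correctly under specialization — is the technical heart of the proof.
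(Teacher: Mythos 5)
Your proposal tracks the paper's strategy closely, and you correctly identify the main obstacle (closing the Kolyvagin divisibility into an equality via descent at the augmentation ideal). But there are genuine gaps at the two places you flag as delicate, plus a local error in (i).

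In (i), your unwinding of the four-term sequence is off: the term $H^1_{\FF_{\al}^*}(F,T_\chi^*)$ does \emph{not} vanish. What actually happens is that the alternating orders in the sequence
$$0\to H^1_{\FF_{\al}}(F,T_\chi)/\ooo c_{F,\Psi}^\chi \to \al/\ooo\,\textup{loc}_p^+(c_{F,\Psi}^\chi)\to H^1_{\FF_{-}^*}(F,T_\chi^*)\to H^1_{\FF_{\al}^*}(F,T_\chi^*)\to 0$$
give $\#(\al/\ooo\,\textup{loc}_p^+(c))\cdot\#H^1_{\FF_{\al}^*}=\#(H^1_{\FF_{\al}}/\ooo c)\cdot\#H^1_{\FF_{-}^*}$, and Corollary~\ref{cor:gras}(ii) makes the two interior factors cancel. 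Claiming the outer term vanishes is not what is used, and it is false (it equals the Gras index).

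In (ii), you appeal to ``the $\Lambda$-adic analogue'' of Mazur--Rubin as if it were directly available; it is not. Here $\LL=\ooo[[\Gamma]]$ with $\Gamma\cong\ZZ_p^{g+1+\delta}$ has Krull dimension $g+2+\delta>2$, whereas the Mazur--Rubin $\Lambda$-adic bound is proved for Krull dimension $2$. The missing ingredient is the reduction to the two-dimensional case via Ochiai's results in \cite[\S3]{ochiaideform}, which the paper invokes explicitly; your proposal has no mechanism to handle the extra variables.

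In (iii), your descent argument is under-specified at exactly its crux. The key quantity is the index $[\ooo:\textup{char}(H^1_{\FF^*_{-}}(F,(T_\chi\otimes\LL)^*)^\vee)\otimes_\LL\LL/\mathcal{A}]$, and one needs that this equals $\#H^1_{\FF_-^*}(F,T_\chi^*)$ — i.e., that the characteristic ideal specializes without pseudonull defect. ``No $\LL$-pseudonull submodules from Corollary~\ref{cor:localfull} and the Selmer-complex formalism'' gestures in the right direction, but the actual argument in the paper represents $\widetilde{R\Gamma}_{f,\textup{Iw}}(F_\infty/F,T_\chi)$ by a cone $\textup{Cone}(M\xrightarrow{u}M)[-2]$ with $M$ free of finite type and $u$ injective (after verifying vanishing of $\tilde H^0_f$ and $\tilde H^3_f$ using (\ref{eqn:assnotrivxhi}) and (\ref{eqn:chiisnotteich1})), and then invokes Nekov\'a\v{r}'s control theorem \cite[Proposition 8.10.1]{nekovar06}. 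Finally, the closing step is not ``standard control'' alone but the elementary Lemma~\ref{lem:equalitymodAimpliesequality} (if $f\mid g$, $f-g\in\mathcal{A}$, $f\notin\mathcal{A}$, then $f/g\in\LL^\times$), applied after Lemma~\ref{lem:charidealprimetoA}. Your sketch has the right shape but omits both the representability-by-a-cone step and the final elementary lemma, without which the divisibility does not become an equality.
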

\begin{proof}
The second equality in (i) is deduced using the defining property of $\Psi$, see Proposition~\ref{prop:ellrestrictedES}(i).

As in Proposition~\ref{prop:4termexact}\footnote{See also [Rub00] Theorem I.7.3,
proof of Theorem III.2.10 and \cite[\S III.1.7]{deshalit87}.}, the Poitou-Tate global duality and Lemma~\ref{lem:minusvanishingforunits} yields an exact sequence
$$0\ra H^1_{\FF_{\al}}(F,T_\chi)/\ooo\cdot c_{F,\Psi}^\chi \lra \al/\ooo\cdot \textup{loc}_p^+\left(c_{F,\Psi}^\chi\right)  \lra H^1_{\FF_{-}^*}(F,T_\chi^*) \lra H^1_{\FF_{\al}^*}(F,T_\chi^*)\ra 0$$
The first equality in (i) now follows from Corollary~\ref{cor:gras}(ii).

When the Iwasawa algebra $\LL$ has Krull dimension 2, (ii) follows from \cite[Theorem 5.3.1]{mr02}. 
The general case is reduced to the case of dimension 2 applying the results of Ochiai in \cite[\S3]{ochiaideform}.

Let $\mathcal{A}=\ker\{\LL \ra \ooo\}$ be the augmentation ideal. By \cite[Lemma 3.5.3]{mr02}, we may identify $H^1_{\FF^*_{-}}(F,(T_\chi\otimes\LL)^*)[\mathcal{A}]$ with $H^1_{\FF^*_{-}}(F,(T_\chi\otimes\LL)^*[\mathcal{A}])$, and
\begin{align}
\label{eqn:modaugdescent} H^1_{\FF^*_{-}}(F,(T_\chi\otimes\LL)^*[\mathcal{A}])&= H^1_{\FF^*_{-}}(F,(T_\chi\otimes\LL/\mathcal{A})^*)\\
\notag&=H^1_{\FF^*_{-}}(F,T_\chi^*).
\end{align}
Since $H^1_{\FF^*_{-}}(F,T_\chi^*)$ is finite by (i), the first statement of (iii) follows. It also follows from (ii) and Proposition~\ref{prop:4termexact}(ii) (along with the choice of $\Psi$ as in Proposition~\ref{prop:ellrestrictedES}
) that
$$\textup{char}\left(H^1_{\FF^*_{-}	}(F,(T_\chi\otimes\LL)^*)^\vee\right) \mid \textup{char} \left(\wedge^r H^1_{+}(F_p,T_\chi\otimes\LL)/\LL\cdot  \textup{loc}_p^{+,\,\otimes r}\left(\varepsilon_{F_\infty}^\chi \right)\right).$$
It remains to prove that this divisibility may in fact be turned into an equality and this is what we carry out in what follows.

We will first check the equality modulo the augmentation ideal $\mathcal{A}$ (namely, the statements (\ref{eqn:lhsmodA}) and (\ref{eqn:rhsmodA})) , which using Lemma~\ref{lem:charidealprimetoA}, Lemma~\ref{lem:equalitymodAimpliesequality} below and the divisibility obtained above will conclude the proof.

Observe that the $\LL$-module $\al_\infty/\LL\cdot \textup{loc}_p^{+}(c_{F_\infty,\Psi}^\chi)$ is cyclic. We see therefore that
\begin{align*}
\textup{char} \left(\wedge^r H^1_{+}(F_p,T_\chi\otimes\LL)/\LL\cdot  \textup{loc}_p^{+,\,\otimes r}\left(\varepsilon_{F_\infty}^\chi \right)\right)&=\textup{char} \left(\al_\infty/\LL\cdot \textup{loc}_p^{+}(c_{F_\infty,\Psi}^\chi)\right)\\
&=\textup{Fitt}_{\LL}\left( \al_\infty/\LL\cdot \textup{loc}_p^{+}(c_{F_\infty,\Psi}^\chi)\right), \end{align*}
where the first equality is obtained thanks to the choice of $\Psi$ and $\textup{Fitt}_{\LL}\left(M\right)$ denotes the initial Fitting ideal of a $\LL$-module $M$. Thus,
\begin{align}\label{eqn:lhsmodA}[\ooo:\textup{char}(\al_\infty/\LL\cdot c_{F_\infty,\Psi}^\chi)\otimes_{\LL}\LL/\mathcal{A}]&=[\ooo:\textup{Fitt}_{\ooo}\left((\al_\infty/\LL\cdot c_{F_\infty,\Psi}^\chi)\otimes_{\LL}\LL/\mathcal{A}\right)]\\
\notag&=[\ooo:\textup{Fitt}_{\ooo}(\al/\ooo\cdot c_{F,\Psi}^\chi)]\\
\notag&=\#H^1_{\FF_-^*}(F,T_\chi^*).\end{align}
We next check that
\begin{align}\label{eqn:rhsmodA}
[\ooo:\textup{char}\left(H^1_{\FF^*_{-}}(F,(T_\chi\otimes\LL)^*)^\vee\right)\otimes_{\LL}\LL/\mathcal{A}]&=\#H^1_{\FF^*_{-}}(F,(T_\chi\otimes\LL)^*)^\vee\otimes\LL/\mathcal{A}\\
\notag &=\#H^1_{\FF_-^*}(F,T_\chi^*).
\end{align}
where the second equality holds thanks to (\ref{eqn:modaugdescent}). In order to achieve this, we appeal to Nekov\'a\v{r}'s theory of Selmer complexes and the descent formalism built in his theory.

The Selmer complex $\widetilde{R\Gamma}_{f,\textup{Iw}}(F_\infty/F,T_\chi)$ (resp., the dual complex $\widetilde{R\Gamma}_{f}(F_{\Sigma(\FFc)}/F_\infty,T_\chi^*)$, in the sense of \cite[Proposition 9.7.2]{nekovar06}) related to the Selmer group $H^1_{\FF_-^*}(F,(T_\chi\otimes\LL)^*)$ is given by the Greenberg local conditions determined by
$$U_v^+(T_\chi)= \begin{cases}
0, &  \hbox{ if } v \in \Sigma_p,\\
T_\chi,            &               \hbox{ if } v \in \Sigma_p^c. \end{cases} $$
(resp., $$U_v^+(T_\chi^*)= \begin{cases}
T_\chi^*, &  \hbox{ if } v \in \Sigma_p,\\
0,            &               \hbox{ if } v \in \Sigma_p^c. \end{cases}$$
for the dual complex).
Since we assume (\ref{eqn:assnotrivxhi}), \cite[Lemma 9.6.3]{nekovar06} (and \cite[Proposition 8.8.6]{nekovar06} to pass to limit) shows that
$$\tilde{H}^1_{f}(F_{\Sigma(\FFc)}/F_{\infty},T_\chi^*) \stackrel{\sim}{\lra} H^1_{\FF_-^*}(F,(T_\chi\otimes\LL)^*),$$
and \cite[8.9.6.2]{nekovar06} that
$$\tilde{H}^2_{f,\textup{Iw}}(F_\infty/F, T_\chi)\cong \tilde{H}^1_{f}(F_{\Sigma(\FFc)}/F_{\infty},T_\chi^*)^\vee.$$
Furthermore, we proved above that the $\LL$-module
$$\tilde{H}^1_{f}(F_{\Sigma(\FFc)}/F_{\infty},T_\chi^*)\cong  H^1_{\FF_-^*}(F,(T_\chi\otimes\LL)^*)$$
is cotorsion. Set $A_\chi=T_\chi\otimes\frak{F}/\ooo$ and fix a prime $\wp$ of $F$ and a prime $\tilde{\wp}$ of $L_\chi$ above $\wp$. Let $\Delta_\wp$ be the decomposition group of $\tilde{\wp}$ in $\Delta$. Since we assumed (\ref{eqn:assnotrivxhi}) and (\ref{eqn:chiisnotteich1}), we observe that (see \cite[\S9.5]{nekovar06}) 
\be\tilde{H}^0_f(F,A_\chi)=\pmb{\mu}_{p^\infty}(L_\chi)^\chi=0\ee
\be\label{eqn:h3tildevanishes}\tilde{H}^3_f(F,T_\chi)=(\ooo[\Delta/\Delta_\wp])^\chi=0\ee
hence by duality that
$$\tilde{H}^0_f(F,X)=0$$
for $X=A_\chi, T_\chi^*$. The proof of \cite[Proposition 9.7.7]{nekovar06} thus shows that the complex $\widetilde{R\Gamma}_{f,\textup{Iw}}(F_\infty/F,T_\chi)$ may be represented by a complex
$$\textup{Cone}\left(M \stackrel{u}{\lra} M\right)[-2]$$
where $M$ is a free $\LL$-module of finite type and $u$ is injective. This, together with Nekov\'a\v{r}'s control theorem \cite[Proposition 8.10.1]{nekovar06}
$$\widetilde{R\Gamma}_{f,\textup{Iw}}(F_\infty/F,T_\chi)\stackrel{\mathbf{L}}{\otimes}_{\LL} \ooo\stackrel{\sim}{\lra} \widetilde{R\Gamma}_{f}(F,T_\chi)$$
concludes the proof of (\ref{eqn:rhsmodA}), hence also the proof of the theorem.
\end{proof}
\begin{rem}
\label{rem:selmercomplexconcentratedinonedegree}
One may also observe directly by Proposition \ref{prop:minusselmervanishes} that
$$\tilde{H}^1_{f,\textup{Iw}}(F_\infty/F,T_\chi)\cong H^1_{\FF_-}(F,T_\chi\otimes\LL)=0$$
and thus the Selmer complex $\widetilde{R\Gamma}_{f,\textup{Iw}}(F_\infty/F,T_\chi)$ has no cohomology in degree 1. Also, the vanishing $(\ref{eqn:h3tildevanishes})$ implies by Nakayama's Lemma (using \cite[8.10.3.3]{nekovar06}) that the Selmer complex has no cohomology in degree 3 either.
\end{rem}
\begin{lemma}
\label{lem:charidealprimetoA}
$\textup{char}\left(H^1_{\FF^*_{-}}(F,(T_\chi\otimes\LL)^*)^\vee\right) \not\subset \mathcal{A}$.
\end{lemma}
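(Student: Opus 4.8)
The plan is to show that the characteristic ideal of $H^1_{\FF^*_{-}}(F,(T_\chi\otimes\LL)^*)^\vee$ is not contained in the augmentation ideal $\mathcal{A}$ by analyzing the specialization of this module modulo $\mathcal{A}$. First I would invoke the identification already established in the proof of Theorem~\ref{thm:mainconjforTchi}, namely the control statement (via \cite[Lemma 3.5.3]{mr02}) that
$$H^1_{\FF^*_{-}}(F,(T_\chi\otimes\LL)^*)[\mathcal{A}] \cong H^1_{\FF^*_{-}}(F,T_\chi^*),$$
together with the fact (Theorem~\ref{thm:mainconjforTchi}(i)) that $H^1_{\FF^*_{-}}(F,T_\chi^*)$ is \emph{finite}. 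The key general principle is the following: if $N$ is a finitely generated torsion $\LL$-module, then $\textup{char}(N)\not\subset\mathcal{A}$ if and only if $N/\mathcal{A}N$ (equivalently $N\otimes_\LL\LL/\mathcal{A}$) is finite, i.e.\ the specialization of $N$ at the augmentation ideal is finite; indeed $\textup{char}(N)\subset\mathcal{A}$ would force every height-one prime dividing $\textup{char}(N)$ that is contained in $\mathcal{A}$ to contribute, and by going-down the locus where $N$ fails to be pseudo-null over the local ring at $\mathcal{A}$ would be positive-dimensional.

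The main step is therefore to reduce the finiteness of $N/\mathcal{A}N$ for $N=H^1_{\FF^*_{-}}(F,(T_\chi\otimes\LL)^*)^\vee$ to the finiteness of its submodule-of-invariants description. Dualizing, $N/\mathcal{A}N = \left(H^1_{\FF^*_{-}}(F,(T_\chi\otimes\LL)^*)[\mathcal{A}]\right)^\vee$, and by the control isomorphism above this equals $H^1_{\FF^*_{-}}(F,T_\chi^*)^\vee$, which is finite by Theorem~\ref{thm:mainconjforTchi}(i). Hence $N\otimes_\LL\LL/\mathcal{A}$ is finite, and since $N$ is a torsion $\LL$-module (its characteristic ideal being the object of the assertion), the principle in the previous paragraph gives $\textup{char}(N)\not\subset\mathcal{A}$. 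Concretely, I would phrase this as: write $\textup{char}(N)=(f)$; if $f\in\mathcal{A}$ then localizing at the height-one primes of $\LL$ contained in $\mathcal{A}$ — or more directly, reducing mod $\mathcal{A}$ — one finds $N/\mathcal{A}N$ would have positive $\ooo$-corank (because $f$ maps to $0$ in $\LL/\mathcal{A}\cong\ooo$ cannot ``cut down'' $N$ in that direction), contradicting finiteness.

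The one place that needs care — and which I expect to be the main obstacle — is justifying that \emph{finiteness of the mod-$\mathcal{A}$ specialization genuinely implies} $\textup{char}(N)\not\subset\mathcal{A}$, rather than just the weaker statement that some higher Fitting-type invariant is a unit. The cleanest route is via the structure theory: since $\LL$ is a regular (hence UFD) Noetherian local ring, $N$ is pseudo-isomorphic to $\bigoplus_i \LL/\frak{p}_i^{n_i}$ with $\frak{p}_i$ height-one primes, and $\textup{char}(N)=\prod\frak{p}_i^{n_i}$. If some $\frak{p}_i\subset\mathcal{A}$, then $(\LL/\frak{p}_i^{n_i})\otimes_\LL\LL/\mathcal{A}$ is a module over $\LL/\mathcal{A}\cong\ooo$ that is \emph{not} finite (since $\dim \LL/(\frak{p}_i+\mathcal{A}) = \dim\LL/\mathcal{A} - \textup{ht}(\text{image of }\frak{p}_i) = 1$ when $\frak{p}_i\subset\mathcal{A}$, so the quotient has positive $\ooo$-rank), and one must check that the pseudo-isomorphism does not destroy this — which it does not, because the kernel and cokernel of a pseudo-isomorphism are pseudo-null, hence finite after tensoring with $\LL/\mathcal{A}$ provided $\mathcal{A}$ is not contained in the (height $\geq 2$) support of those torsion modules, and one may choose the generators $\gamma_i-1$ of $\mathcal{A}$ generically enough, or simply note that pseudo-null modules over $\LL$ specialized at the augmentation ideal are finite since $\LL/\mathcal{A}$ has dimension one while the pseudo-null support has codimension $\geq 2$. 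Assembling these observations gives the claimed $\textup{char}\left(H^1_{\FF^*_{-}}(F,(T_\chi\otimes\LL)^*)^\vee\right)\not\subset\mathcal{A}$.
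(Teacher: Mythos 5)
Your overall strategy is a genuinely different, more elementary route than the paper's. The paper disposes of the lemma in one line by invoking the display~(\ref{eqn:rhsmodA}), which was just established via Nekov\'a\v{r}'s Selmer-complex machinery (the representation of $\widetilde{R\Gamma}_{f,\textup{Iw}}$ as $\textup{Cone}(M\xrightarrow{u}M)[-2]$ with $M$ free and $u$ injective, plus the control theorem): that display says outright that $[\ooo:\textup{char}(N)\bmod\mathcal{A}]$ equals the finite number $\#H^1_{\FF_-^*}(F,T_\chi^*)$, whence $\textup{char}(N)\not\subset\mathcal{A}$. You instead try to get by with only the weaker inputs — the control isomorphism~(\ref{eqn:modaugdescent}) and the finiteness from Theorem~\ref{thm:mainconjforTchi}(i) — to see that $N/\mathcal{A}N$ is finite, and then argue via commutative algebra that this alone forces $\textup{char}(N)\not\subset\mathcal{A}$. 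That is a legitimate and more self-contained alternative, and the conclusion of your key principle is in fact true in the direction you use it.

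However, your justification of that key step has a genuine error. The claim ``pseudo-null modules over $\LL$ specialized at the augmentation ideal are finite since $\LL/\mathcal{A}$ has dimension one while the pseudo-null support has codimension $\geq 2$'' is false. The augmentation ideal $\mathcal{A}$ itself has height $g+1\geq 2$, so $\mathcal{A}$ may perfectly well lie \emph{inside} a codimension-$\geq 2$ support: the module $P=\LL/\mathcal{A}$ is pseudo-null, yet $P/\mathcal{A}P=P\cong\ooo$ is infinite. The same example shows that the ``only if'' half of your stated equivalence ($\textup{char}(N)\not\subset\mathcal{A}$ implies $N/\mathcal{A}N$ finite) is also false, although fortunately that direction is not what you need. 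The other escape you offer (``choose the generators $\gamma_i-1$ generically enough'') is not available either, since $\mathcal{A}$ is a fixed ideal, not one you get to perturb.

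The correct — and in fact much shorter — way to finish your argument bypasses the structure theorem and the pseudo-null bookkeeping entirely. Since $\LL/\mathcal{A}\cong\ooo$, the module $N/\mathcal{A}N$ is finite precisely when $N\otimes_\LL\kappa(\mathcal{A})=0$, which by Nakayama over the local ring $\LL_\mathcal{A}$ is equivalent to $N_\mathcal{A}=0$, i.e.\ $\mathcal{A}\notin\textup{Supp}(N)$. If that holds, then for every height-one prime $\frak{p}\subset\mathcal{A}$ one has $N_{\frak{p}}=(N_\mathcal{A})_{\frak{p}}=0$, so no such $\frak{p}$ contributes to $\textup{char}(N)$; writing $\textup{char}(N)=(f)$ with $f$ a product of irreducible elements generating primes not contained in $\mathcal{A}$, and using that $\mathcal{A}$ is prime, one concludes $f\notin\mathcal{A}$. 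This is exactly the statement you want and requires no mention of pseudo-isomorphisms.
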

\begin{proof}
This follows from (\ref{eqn:rhsmodA}) and the finiteness of $H^1_{\FF^*_{-}}(F,T_\chi^*)$.
\end{proof}
\begin{lemma}
\label{lem:equalitymodAimpliesequality}
Suppose $f,g \in \LL$ are such that $f \mid g$, $f - g \in \mathcal{A}$ and $f\notin\mathcal{A}$. Then $f/g\in \LL^\times$.
\end{lemma}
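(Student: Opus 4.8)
The plan is to prove the contrapositive-flavored statement directly: from $f \mid g$ we have $g = fh$ for some $h \in \LL$, and the goal is to show $h \in \LL^\times$. First I would reduce everything modulo $\mathcal{A}$. Since $\LL/\mathcal{A} \cong \ooo$, the hypothesis $f - g \in \mathcal{A}$ gives $\bar f = \bar g$ in $\ooo$, and $f \notin \mathcal{A}$ gives $\bar f \neq 0$ in $\ooo$. Reducing $g = fh$ modulo $\mathcal{A}$ yields $\bar g = \bar f \cdot \bar h$ in $\ooo$, hence $\bar f = \bar f \cdot \bar h$, and since $\ooo$ is a domain and $\bar f \neq 0$ we may cancel to get $\bar h = 1$ in $\ooo$. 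In particular $\bar h \in \ooo^\times$, i.e. $h \notin \mathcal{A}$.

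The second step is to promote "$h \notin \mathcal{A}$" to "$h \in \LL^\times$." Here I would use that $\LL = \ooo[[\Gamma]]$ is a complete local ring whose maximal ideal $\mathfrak{m}_\LL$ contains $\mathcal{A}$ (indeed $\mathcal{A} = (\gamma_1 - 1, \dots, \gamma_{g+1-\delta} - 1)$ lies in the augmentation ideal, which is a proper ideal, and $\LL/\mathcal{A} \cong \ooo$ is local with maximal ideal $\varpi\ooo$, so $\mathfrak{m}_\LL/\mathcal{A} = \varpi\ooo$ and $\mathfrak{m}_\LL = \mathcal{A} + (\varpi)$). We have shown $\bar h = 1$ in $\ooo = \LL/\mathcal{A}$; in particular $\bar h$ is a unit in $\ooo$, so $h$ is not in the preimage of $\varpi\ooo$, that is $h \notin \mathfrak{m}_\LL$. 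Since $\LL$ is local, an element outside the maximal ideal is a unit, so $h \in \LL^\times$, and therefore $f/g = h^{-1} \in \LL^\times$ as claimed.

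There is essentially no hard part here — the statement is an elementary fact about the complete local ring $\LL$ and its augmentation-type ideal $\mathcal{A}$. The only point requiring a word of care is the assertion that $\mathcal{A} \subseteq \mathfrak{m}_\LL$ with $\LL/\mathcal{A}$ local, so that "not in $\mathcal{A}$" together with "image is a unit mod $\mathcal{A}$" forces "not in $\mathfrak{m}_\LL$"; this is immediate from $\LL/\mathcal{A} \cong \ooo$ being a local ring and the correspondence between ideals. If one prefers to avoid even mentioning $\mathfrak{m}_\LL$, an equivalent route is: $h$ maps to $1 \in \ooo$ under $\LL \twoheadrightarrow \LL/\mathcal{A}$, so $1 - h \in \mathcal{A}$; since $\mathcal{A}$ is contained in the Jacobson radical of the complete ring $\LL$ (as $\LL$ is $\mathcal{A}$-adically separated and local), $1 - (1-h) = h$ is a unit. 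Either phrasing closes the argument in a few lines.
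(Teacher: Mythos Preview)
Your proof is correct and follows essentially the same route as the paper: write $g=fh$, use that $\mathcal{A}$ is prime (equivalently, that $\LL/\mathcal{A}\cong\ooo$ is a domain) together with $f\notin\mathcal{A}$ to deduce $1-h\in\mathcal{A}\subset\frak{m}_\LL$, and conclude that $h$ is a unit. The paper's argument is just the terse version of what you wrote.
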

\begin{proof}
Write $g=f\cdot h$ with $h\in \LL$, so that $f-g=f(1-h) \in \mathcal{A}$. Since $f\notin \mathcal{A}$, it follows that $1-h \in \mathcal{A} \subset \frak{m}_{\LL}$, where $\frak{m}_{\LL}$ is the maximal ideal. Hence $h$ is indeed a unit.
\end{proof}
\begin{rem}
\label{rem:bigselmerexplicit}
Mimicking the proof of \cite[Proposition 3.2.6]{r00} (with the aid of the assumption (\ref{eqn:assnotrivxhi})), we see that
$$H^1_{+}(F_p,T_\chi\otimes\LL)=\varprojlim_{M} \bigoplus_{\wp \in \Sigma_p}\oo_{ML_\chi,\wp}^{\times,\chi}$$
where $\oo_{ML_\chi,\wp}=\oo_{ML_\chi}\otimes \oo_{F_{\wp}}$ and the inverse limit is over finite subextensions of $F_\infty/F$. Similarly (using again the arguments of \cite[\S1.6]{r00}), the $\LL$-module $H^1_{\FF_-^*}(F,(T_\chi\otimes\LL)^*)$ may be identified by $\textup{Gal}(M_\infty/L_\infty)^{\chi}$, where $L_\infty=L_\chi F_\infty$ and $M_\infty$ is the maximal abelian extension of $L_\infty$ unramified outside $\Sigma_p$. Theorem~\ref{thm:mainconjforTchi}(iii) may therefore be regarded as a natural generalization of \cite[Theorem 4.1]{rubinmainconj}.
\end{rem}
\subsection{Katz's $p$-adic $L$-function}
\label{subsec:conntoKatz}
Attached to the $p$-ordinary CM-type $\Sigma$ and the character $\chi$, Katz \cite{katz78} and Hida-Tilouine~\cite[Theorem II]{ht93} has constructed a $p$-adic $L$-function $\mathcal{L}_{\chi}^{\Sigma} \in \LL_{\mathcal{W}}:=\mathcal{W}_\ooo[[\Gamma]]$, where $\mathcal{W}_\ooo$ is the composite of $\ooo$ and $\mathcal{W}$ (and $\mathcal{W}$ is the $p$-adic completion of the ring of integers of the maximal unramified extension of $\QQ_p$), that $p$-adically interpolates the algebraic parts (in the sense of \cite{shimura75}) of the critical Hecke $L$-values for $\chi$ twisted by the characters of $\Gamma$. In the spirit of the main result of \cite{yager}, we propose the following conjecture under our running assumptions:
\begin{conj}
\label{conj:yager}
$ \textup{char} \left(\wedge^g H^1_{+}(F_p,T_\chi\otimes\LL_{\mathcal{W}})/\LL_{\mathcal{W}}\cdot  \textup{loc}_p^{+,\,\otimes g}\left(\varepsilon_{F_\infty}^\chi \right)\right)=\left(\mathcal{L}_{\chi}^{\Sigma}\right)$.
\end{conj}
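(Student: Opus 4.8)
The plan is to deduce this not directly but from Theorem~\ref{thm:mainconjforTchi}(iii) together with the cases of the CM main conjecture (Conjecture~\ref{conj:mainCMconjintro}) that are already available; in fact Theorem~\ref{thm:mainconjforTchi}(iii) makes Conjecture~\ref{conj:yager} \emph{equivalent} to Conjecture~\ref{conj:mainCMconjintro}, so the real work is the identification, after which Hsieh's theorem finishes the job in a large range. First I would invoke Remark~\ref{rem:bigselmerexplicit}, which under the running hypotheses (in particular (\ref{eqn:assnotrivxhi}), $\mathbf{H.\Sigma L}$ and the Rubin--Stark conjecture) identifies $H^1_{+}(F_p,T_\chi\otimes\LL)$ with the tower of semi-local $\chi$-units $\frak{U}_\infty^\chi=\varprojlim_M\bigoplus_{\wp\in\Sigma_p}\oo_{ML_\chi,\wp}^{\times,\chi}$ — compatibly with the localization map, so that $\textup{loc}_p^{+,\,\otimes g}(\varepsilon_{F_\infty}^\chi)$ is carried to the image of the Rubin--Stark tower in $\wedge^g\frak{U}_\infty^\chi$ — and identifies the left-hand side of Theorem~\ref{thm:mainconjforTchi}(iii) with $\textup{char}(\frak{X}_\infty)$, where $\frak{X}_\infty=\Gal(M_\infty/L_\infty)^\chi$ is the Iwasawa module of the introduction. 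Under these identifications Theorem~\ref{thm:mainconjforTchi}(iii) is exactly Theorem~A,
$$\textup{char}(\frak{X}_\infty)=\textup{char}\left(\wedge^g\frak{U}_\infty^\chi/\LL\cdot\textup{loc}_p^{+,\,\otimes g}(\varepsilon_{F_\infty}^\chi)\right).$$

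Next I would base change to $\LL_{\mathcal{W}}=\mathcal{W}_\ooo[[\Gamma]]$. Since $\mathcal{W}_\ooo$ is faithfully flat over $\ooo$, the ring $\LL_{\mathcal{W}}$ is flat over $\LL$, and the formation of characteristic ideals, of exterior powers, and of the quotient by a cyclic submodule all commute with $-\otimes_\LL\LL_{\mathcal{W}}$. Tensoring the displayed identity of Theorem~A therefore gives
$$\textup{char}\left(\wedge^g H^1_{+}(F_p,T_\chi\otimes\LL_{\mathcal{W}})/\LL_{\mathcal{W}}\cdot\textup{loc}_p^{+,\,\otimes g}(\varepsilon_{F_\infty}^\chi)\right)=\textup{char}(\frak{X}_\infty)\LL_{\mathcal{W}}=\textup{char}(\frak{X}_\infty\otimes_\ooo\mathcal{W}).$$
Hence Conjecture~\ref{conj:yager} holds if and only if $\textup{char}(\frak{X}_\infty\otimes_\ooo\mathcal{W})=(\mathcal{L}_\chi^\Sigma)$, i.e.\ if and only if the CM main conjecture holds for the pair $(\Sigma,\chi)$; this is the equivalence with Conjecture~\ref{conj:yagerintro} announced in the introduction.

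Finally I would feed in the known cases of the CM main conjecture. Under the additional hypotheses of Theorem~B (notably $p>5$, $\chi$ anticyclotomic and unramified above $p$, and $\chi|_{G_{F(\sqrt{p^*})}}$ non-trivial), Hsieh's theorem in~\cite{hsiehCMmainconj} proves $\textup{char}(\frak{X}_\infty\otimes\mathcal{W})=(\mathcal{L}_\chi^\Sigma)$, and then the equivalence above yields Conjecture~\ref{conj:yager} in that range — this is Theorem~B, and, through the identifications, the promised generalization of Yager's theorem~\cite{yager}. Beyond Hsieh's hypotheses the conjecture remains open, but the reduction shows it carries no residual cohomological content: it is now purely a statement about the CM main conjecture, equivalently about the $p$-adic interpolation of the Katz $L$-function by the Rubin--Stark tower (the local Conjecture~\ref{conj:yagerintro}).

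The main obstacle is entirely upstream of this statement. One piece is the \emph{equality} — not merely a divisibility — in Theorem~\ref{thm:mainconjforTchi}(iii): the rank-$g$ Euler/Kolyvagin system of Theorem~\ref{thm:ESKSrestricted} supplies only one inclusion, and the reverse inclusion needs the Gras-type index identity of Corollary~\ref{cor:gras} (extracted from the analytic class number formula via a careful choice of the auxiliary set $\mathcal{T}$) to pin down the bottom layer, together with Nekov\'a\v{r}'s control theorem and descent formalism~\cite{nekovar06} to propagate the bottom-layer equality up the $\ZZ_p^{g+1+\delta}$-tower. The other, genuine obstacle to an unconditional proof is the CM main conjecture itself in the generality beyond Hsieh's work, which is not within reach of current techniques; the whole point of Theorem~A is to trade that global input for the local Conjecture~\ref{conj:yagerintro}, so that proving Conjecture~\ref{conj:yager} in new cases is precisely as hard as proving new instances of the local interpolation property of the Rubin--Stark elements.
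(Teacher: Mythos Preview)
The statement you were asked to prove is in fact stated in the paper as a \emph{conjecture}, not as a theorem; the paper does not give an unconditional proof of it. What the paper does provide is precisely the reduction you sketch: Theorem~\ref{thm:mainconj} records that, in view of Theorem~\ref{thm:mainconjforTchi}(iii) and the identifications of Remark~\ref{rem:bigselmerexplicit}, Conjecture~\ref{conj:yager} is equivalent to the CM main conjecture, and then Hsieh's theorem is invoked to obtain the conditional Theorem~B. Your proposal recovers exactly this, with the same ingredients and in the same order, so it matches the paper's treatment; there is nothing further to prove here because nothing further is claimed.
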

In view of Theorem~\ref{thm:mainconjforTchi}(iii), this conjecture is equivalent to the following statement:
\begin{thm}
\label{thm:mainconj}
Conjecture 1 holds true if and only if the Katz $p$-adic $L$-function $\mathcal{L}_{\chi}^{\Sigma}$ generates $\textup{char}\left(H^1_{\FF^*_{-}}(F,(T_\chi\otimes\LL_{\mathcal{W}})^*)^\vee\right).$
\end{thm}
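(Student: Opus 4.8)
The plan is to obtain Theorem~\ref{thm:mainconj} as a formal consequence of Theorem~\ref{thm:mainconjforTchi}(iii), Remark~\ref{rem:bigselmerexplicit}, and flat base change along the inclusion $\LL=\ooo[[\Gamma]]\hookrightarrow\LL_{\mathcal{W}}=\mathcal{W}_\ooo[[\Gamma]]$: the characteristic ideal computed in Theorem~\ref{thm:mainconjforTchi}(iii) admits two readings, one through the Rubin--Stark elements and one through the Galois-theoretic Iwasawa module $\frak{X}_\infty$ of the Main conjecture, and we only have to pass that identity to $\LL_{\mathcal{W}}$-coefficients.

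First I would record that $\LL_{\mathcal{W}}$ is faithfully flat over $\LL$ --- since $\mathcal{W}_\ooo$ is a $p$-torsion-free, hence flat, $\ooo$-algebra and $\LL\to\LL_{\mathcal{W}}$ is a local homomorphism --- and is again a regular complete local Noetherian domain. Because $H^1_{+}(F_p,T_\chi\otimes\LL)$ is a free $\LL$-module by Corollary~\ref{cor:localfull}, extension of scalars along $\LL\to\LL_{\mathcal{W}}$ commutes with the formation of its $g$-th exterior power, with the cyclic quotient by $\LL\cdot\textup{loc}_p^{+,\,\otimes g}(\varepsilon_{F_\infty}^\chi)$, and (by a routine base-change computation) with the formation of the torsion $\LL$-module $H^1_{\FF^*_{-}}(F,(T_\chi\otimes\LL)^*)^\vee$; moreover $\textup{char}_{\LL_{\mathcal{W}}}(M\otimes_\LL\LL_{\mathcal{W}})=\textup{char}_\LL(M)\,\LL_{\mathcal{W}}$ for any finitely generated torsion $\LL$-module $M$, as both rings are regular. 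Applying all of this to the equality of Theorem~\ref{thm:mainconjforTchi}(iii) gives
$$\textup{char}\left(H^1_{\FF^*_{-}}(F,(T_\chi\otimes\LL_{\mathcal{W}})^*)^\vee\right)=\textup{char}\left(\wedge^g H^1_{+}(F_p,T_\chi\otimes\LL_{\mathcal{W}})/\LL_{\mathcal{W}}\cdot\textup{loc}_p^{+,\,\otimes g}(\varepsilon_{F_\infty}^\chi)\right),$$
with both sides principal and the left-hand module $\LL_{\mathcal{W}}$-torsion. Hence Conjecture~\ref{conj:yager}, which asserts that the right-hand side equals $(\mathcal{L}_\chi^\Sigma)$, is equivalent to the assertion that $\mathcal{L}_\chi^\Sigma$ generates $\textup{char}\left(H^1_{\FF^*_{-}}(F,(T_\chi\otimes\LL_{\mathcal{W}})^*)^\vee\right)$.

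It remains to match that last characteristic ideal with the one in the Main conjecture. By Remark~\ref{rem:bigselmerexplicit} there is a canonical isomorphism of $\LL$-modules $H^1_{\FF^*_{-}}(F,(T_\chi\otimes\LL)^*)^\vee\cong\textup{Gal}(M_\infty/L_\infty)^\chi=\frak{X}_\infty$, so after the base change above $H^1_{\FF^*_{-}}(F,(T_\chi\otimes\LL_{\mathcal{W}})^*)^\vee\cong\frak{X}_\infty\otimes_{\ooo}\mathcal{W}$. Thus $\mathcal{L}_\chi^\Sigma$ generates $\textup{char}\left(H^1_{\FF^*_{-}}(F,(T_\chi\otimes\LL_{\mathcal{W}})^*)^\vee\right)$ if and only if it generates $\textup{char}(\frak{X}_\infty\otimes_{\ooo}\mathcal{W})$, which is precisely the assertion of Conjecture~\ref{conj:mainCMconjintro}. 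Combining the two equivalences proves Theorem~\ref{thm:mainconj} and, simultaneously, the equivalence of Conjecture~\ref{conj:yager} with the Main conjecture announced in the Introduction.

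The only genuinely non-formal step --- the one I would be most careful about --- is the identification behind Remark~\ref{rem:bigselmerexplicit}: one must verify that the local condition $\FF_{-}$ (built from $\Sigma_p^c$) dualises to the ramification locus $\Sigma_p$ defining $M_\infty$, and that the $\chi$-isotypic and Pontryagin-dual conventions are arranged so that no Iwasawa involution $\gamma\mapsto\gamma^{-1}$ enters the characteristic ideal while $\mathcal{L}_\chi^\Sigma$ is attached to the very same CM-type $\Sigma$ --- so that the assertion that $\mathcal{L}_\chi^\Sigma$ generates $\textup{char}(\cdots)$ is an honest equality of ideals of $\LL_{\mathcal{W}}$. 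A secondary point is that the Euler/Kolyvagin-system input to Theorem~\ref{thm:mainconjforTchi}(iii) was run over $\LL$, which has finite residue field, whereas $\LL_{\mathcal{W}}$ has residue field $\overline{\mathbb{F}}_p$; this is exactly why I would deduce the $\LL_{\mathcal{W}}$-statement from the $\LL$-statement by flat base change rather than re-run the machinery over $\LL_{\mathcal{W}}$.
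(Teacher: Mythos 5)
Your proof is correct and follows the paper's own (essentially one-line) argument: Theorem~\ref{thm:mainconj} is presented in the paper as an immediate reformulation of Theorem~\ref{thm:mainconjforTchi}(iii), and you have correctly identified that as the engine. What you add — and what the paper leaves silent — is the explicit passage from $\LL$-coefficients to $\LL_{\mathcal{W}}$-coefficients via faithfully flat base change, together with the observation that characteristic ideals, exterior powers of free modules, and the cyclic quotient by the Rubin--Stark class all commute with that base change; this is genuinely the right thing to check, since Theorem~\ref{thm:mainconjforTchi}(iii) is stated over $\LL$ while Conjecture~\ref{conj:yager} lives over $\LL_{\mathcal{W}}$, and the paper just elides it. Your final paragraph identifying $H^1_{\FF_-^*}(F,(T_\chi\otimes\LL_{\mathcal{W}})^*)^\vee$ with $\frak{X}_\infty\otimes_{\ooo}\mathcal{W}$ via Remark~\ref{rem:bigselmerexplicit} matches the paper's follow-up sentence (``The latter statement is known as the $(g+1)$-variable main conjecture''), though strictly speaking it is not part of Theorem~\ref{thm:mainconj} itself, and the caveat you raise about sign/involution conventions in that identification is a reasonable thing to be wary of.

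One small point worth flagging explicitly (you gesture at it with the ``residue field'' remark): the paper uses two different definitions of $\mathcal{W}$. In the introduction it is the valuation ring of $\widehat{\overline{\QQ}}_p$, which is \emph{not} Noetherian and would make the flat-base-change/characteristic-ideal bookkeeping delicate; but in \S\ref{subsec:conntoKatz}, where Theorem~\ref{thm:mainconj} lives, $\mathcal{W}$ is redefined as the completion of the ring of integers of $\QQ_p^{\mathrm{unr}}$, a complete DVR. Your argument is sound because the operative definition is the second one, making $\LL_{\mathcal{W}}$ a regular complete Noetherian local ring, faithfully flat over $\LL$ --- exactly what your appeal to $\textup{char}_{\LL_{\mathcal{W}}}(M\otimes_\LL\LL_{\mathcal{W}})=\textup{char}_\LL(M)\LL_{\mathcal{W}}$ requires.
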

The latter statement is known as the $(g+1)$-variable\footnote{As we assumed \textbf{(H.LC)}, the Iwasawa algebra $\LL=\ooo[[\Gamma]]$ is isomorphic to a power series ring in $g+1$ variables.} main conjecture, see \cite[Page 90]{ht94}. Building on the works of Hida-Tilouine~\cite{ht94}, Hida~\cite{hida06, hidaquadratic}, Mainardi~\cite{mainardi}, Hsieh has recently proved the CM main conjecture in \cite{hsiehCMmainconj} under the following hypotheses:

(\textbf{H.1}) $p>5$ is prime to the minus part of the class number of $F$, to the order of $\chi$ and is unramified in $K/\QQ$.

(\textbf{H.2}) $\chi$ is unramified in $\Sigma_p^c$ and $\chi\omega^{-a}$ is unramified at $\Sigma_p$ for some integer $a\not\equiv 2 \mod p-1$.

(\textbf{H.3}) $\chi$ is \emph{anticyclotomic} in the sense that $\chi(c\delta c^{-1})=\chi(\delta)^{-1}$ for $\delta\in \Delta$ and $c \in G_K$ that induces the generator of $\textup{Gal}(F/K)$.

(\textbf{H.4}) $\chi(\wp)\neq1$ for any $\wp\in \Sigma_p$. (Compare to (\ref{eqn:assnotrivxhi}))

(\textbf{H.5}) The restriction of $\chi$ to $G_{F(\sqrt{p^*})}$ (where $p^*=(-1)^{\frac{p-1}{2}}p$) is non-trivial.

The $g$-variable \emph{anticyclotomic} main conjecture (and therefore the $g$-variable version of Conjecture 1 above) may be verified under much less restrictive hypothesis, namely assuming only \textbf{H.3-5}. See \cite[Corollary 2]{hidaquadratic}.


\subsection{Hecke characters attached to CM abelian varieties}
\label{subsec:HeckecharCMab}
We start this subsection with an overview of well-known facts about CM abelian varieties that we need below, which are originally due to Serre-Tate and Shimura. Let $A_{/K}$ be an abelian variety which has CM by $F$. We assume that $\textup{End}_F(A)=\oo_F$; however, the arguments in this section will carry out to the more general case when the index of the order $\textup{End}_F(A)$ inside the maximal order $\oo_F$ is assumed to be prime to $p$. Assume also that the field $F$ contains no nontrivial $p$-th root of unity.

Let $T_p(A)=\varprojlim A[p^n]$ be the $p$-adic Tate-module of $A$. It is a free $\ZZ_p$-module of rank $2g$ on which $G_F$ acts continuously. As explained in the Remark on page 502 of \cite{serretate}, $T_p(A)$ is free of rank one over $\oo_F\otimes\ZZ_p=\prod_{\frak{p}} \oo_\frak{p}$, where the product is over the primes of $F$ that lie above $p$. This yields a decomposition $T_p(A)=\bigoplus_{\frak{p}}T_\frak{p}(A)$, where each $T_\frak{p}(A)=\varprojlim A[\frak{p}^n]$ is a free $\oo_\frak{p}$-module of rank one. The $G_F$-action on $T_p(A)$ gives rise to a character
$$\psi_\frak{p}: G_F \lra \oo_\frak{p}^\times.$$
By \cite[\S 2]{ribetcompositio}, $\psi_\frak{p}$ is surjective for $p$ large enough; we fix until the end a prime $p$ satisfying this condition. We thence obtain a decomposition
$$T_p(A)\otimes_{\ZZ_p}\overline{\QQ}_p=\bigoplus_{\frak{p}\mid p}\bigoplus_{\sigma: F_\frak{p} \hookrightarrow \overline{\QQ}_p} V_{\frak{p}}^\sigma$$
where $V_{\frak{p}}^\sigma$ is the one-dimensional $\overline{\QQ}_p$-vector space on which $G_F$ acts via the character
$\psi_\frak{p}^\sigma$, which is the compositum
$$G_F\stackrel{\psi_\frak{p}}{\lra} \oo_\frak{p}^\times \stackrel{\sigma}{\lra} \overline{\QQ}_p$$
Fix an embedding $j_\infty:\overline{\QQ}\hookrightarrow \mathbb{C}$ and $j_p:\overline{\QQ}\hookrightarrow \mathbb{C}_p$ extending $\iota_p$. Let $\frak{J}=\Sigma\cup\Sigma^c$ be the set of all embeddings of $F$ into $\overline{\QQ}$.
Attached to $A$, there is a character
$$\pmb{\psi}: \mathbb{A}_F/F^\times\lra F^{\times},$$
which induces the Gr\"ossencharacters
$$\psi_\tau=j_\infty\circ\tau\circ \pmb{\psi}:  \mathbb{A}_F/F^\times \lra \mathbb{C}^\times$$
and its $p$-adic avatars
$$\psi_\tau^{(p)}=j_p\circ\tau\circ\pmb{\psi}:\mathbb{A}_F/F^\times \lra \mathbb{C}_p^\times.$$
Theory of complex multiplication identifies the two sets $\{\textup{rec}\circ \psi_\tau^{(p)}\}_{\tau\in \frak{J}}$ and $\{\psi_\frak{p}^\sigma\}_{\frak{p},\sigma}$ of $p$-adic Hecke characters, where $\textup{rec}:\mathbb{A}_F/F^\times\ra G_F$ is the reciprocity map.
The Hasse-Weil $L$-function $L(A/F,s)$ of $A$ then factors into a product of Hecke $L$-series
$$L(A/F,s)=\prod_{\tau\in\frak{J}} L(\psi_\tau,s).$$
We assume henceforth that $A$ is principally polarized. Fix $\varepsilon \in \Sigma$ and identify $F$ by $F^\varepsilon$. This choice in turn fixes a prime $\wp \in \Sigma_p$ and $\sigma: F_{\wp} \hookrightarrow \overline{\QQ}_p$ in a way that $\textup{rec}\circ\psi_\varepsilon^{(p)}=\psi_\wp^\sigma$. Set $\ooo:=\sigma(\oo_{F_{\wp}})$ and let $\frak{F}:=\textup{Frac}(\ooo)$ denote the fraction field of $\ooo$. Define
$$\psi:=\psi_{\wp}^\sigma: G_F \twoheadrightarrow \ooo^\times;$$
this is the Hecke character for which we apply the results from \S\ref{subsec:conntoKatz}. Note in particular that we have $T^*\cong A[\varpi^\infty]$.
For $\wp$ as above, we assume the following non-anomaly condition on $A$:
\be\label{eqn:hna}   A(F_v)[\varpi]=0 \hbox{  for every prime } v  \hbox{ of } F \hbox{ above } p. \ee
For the character $\psi$, the condition (\ref{eqn:chiisnotteich}) holds trivially true. 
The hypothesis (\ref{eqn:assnotrivxhi}) holds true for $\chi=\omega_\psi$ since we assumed (\ref{eqn:hna}). As we have also assumed that the CM field $F$ contains no $p$-th roots of unity, it follows that the Teichm\"uller character $\omega$ is totally ramified at all primes above $p$; and as $\omega_\psi$ is ramified at only $\wp$, the condition (\ref{eqn:chiisnotteich1}) for $\chi=\omega_\psi$ is also satisfied.

\begin{define}
\label{def:selmergroupsforE}
For $X=F$ or $F_\infty$, let $\textup{Sel}_\varpi(A/X)$ denote the classical $\varpi$-adic Selmer group attached to $A$. Define the \emph{$\varpi$-relaxed Selmer group} by setting 
$$\textup{Sel}_\varpi^{\,\prime}(A/X):=\ker\left(H^1(X,A[\varpi^\infty])\lra\prod_{v\nmid \varpi} \frac{H^1(X_v,A[\varpi^\infty])}{\textup{im}\left(A(X_v)\otimes \frak{F}/\ooo\stackrel{\kappa_v}{\hookrightarrow} H^1(X_v,A[\varpi^\infty]) \right)} \right),$$
where $\kappa_v$ is the Kummer map.
\end{define}
Obviously, $\textup{Sel}_\varpi(A/X) \subset \textup{Sel}^{\,\prime}_\varpi(A/X)$.  
Recall the $\LL$-module $\frak{X}_\infty=\textup{Gal}(M_\infty/L_\infty)^{\chi}$ (with $\chi=\omega_\psi$), the Galois group of the maximal abelian extension of $L_\infty$ unramified outside $\Sigma_p$.
\begin{prop}
\label{prop:compareellipticselmertox}
\begin{itemize}
\item[(i)] $\textup{Sel}^{\,\prime}_\varpi(A/F)\hookrightarrow \textup{Sel}^{\,\prime}_\varpi(A/F_\infty)^\Gamma$.
\item[(ii)] $\textup{Sel}^{\,\prime}_\varpi(A/F_\infty) \subset H^1_{\FF^*_{-}}(F,\TT^*)$.
\end{itemize}
\end{prop}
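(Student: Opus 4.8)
For part (i), the plan is a short control argument. I would first note that $A[\varpi^\infty]\cong T^{*}$, and — since $\chi=\omega_\psi$ gives $T^{*}=T_\chi^{*}\otimes\langle\psi\rangle$ with $\langle\psi\rangle$ trivial on $G_{F_\infty}$ — the restriction of $\psi$ to $G_{F_\infty}$ equals $\omega_\psi\big|_{G_{F_\infty}}$; as $\omega_\psi$ is a nontrivial character of order prime to $p$ (hypothesis (\ref{eqn:chiisnotteich})) and $F_\infty/F$ is pro-$p$, this restriction remains nontrivial, so $A[\varpi^\infty]^{G_{F_\infty}}=\big((\frak{F}/\ooo)(\omega_\psi)\big)^{G_{F_\infty}}=0$. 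Then the kernel $H^{1}(\Gamma,A[\varpi^\infty]^{G_{F_\infty}})$ of the inflation–restriction sequence vanishes, so $H^{1}(F,A[\varpi^\infty])\to H^{1}(F_\infty,A[\varpi^\infty])^{\Gamma}$ is injective; and it carries $\textup{Sel}_\varpi^{\,\prime}(A/F)$ into $\textup{Sel}_\varpi^{\,\prime}(A/F_\infty)$ by the compatibility of the Kummer maps with restriction (for $v\nmid\varpi$ and $w\mid v$ the square relating $\kappa_v$, $\kappa_w$ and $A(F_v)\hookrightarrow A(F_{\infty,w})$ commutes, so $\textup{loc}_v(c)\in\textup{im}(\kappa_v)$ forces $\textup{loc}_w(\textup{res}\,c)\in\textup{im}(\kappa_w)$).

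For part (ii), I would first set up the identification $H^{1}(F_\infty,A[\varpi^\infty])\cong H^{1}(F,\TT^{*})$. Writing $\textup{Sel}_\varpi^{\,\prime}(A/F_\infty)=\varinjlim_M\textup{Sel}_\varpi^{\,\prime}(A/M)$ over the finite subextensions $M$ of $F_\infty/F$, semi-local Shapiro's lemma identifies $\varinjlim_M H^{1}(M,T^{*})$ with $H^{1}(F,(T\otimes\LL)^{*})$ (and likewise the semi-local groups at each place), and composing with the twisting isomorphism of Lemma~\ref{lem:twistedisom} — which identifies $(T\otimes\LL)^{*}$ with $\TT^{*}=(T_\chi\otimes\LL)^{*}$ because $T_\chi=T\otimes\langle\psi\rangle$ with $\langle\psi\rangle$ factoring through $\Gamma$ — gives the identification, compatibly with localization. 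Under it $\textup{Sel}_\varpi^{\,\prime}(A/F_\infty)$ becomes a $\LL$-submodule of $H^{1}(F,\TT^{*})$. Since $\FF_{-}$ on $\TT$ has the full local cohomology at every $\lambda\nmid p$ and the subgroup $H^{1}_{-}(F_p,\TT)$ at $p$, computing annihilators under the $\LL$-adic local Tate pairing shows that $H^{1}_{\FF_{-}^{*}}(F,\TT^{*})$ consists of the classes in $H^{1}(G_{F,\Sigma(\FF_{-})}/F,\TT^{*})$ that are locally trivial at every $\lambda\nmid p$ and at every $\wp^{c}\in\Sigma_p^{c}$, with no condition imposed at the primes of $\Sigma_p$. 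So it suffices to check that every class of $\textup{Sel}_\varpi^{\,\prime}(A/F_\infty)$ is locally trivial at all of these places.

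I would do this place by place. At $\lambda\nmid p$: since $A$ has complex multiplication it has potentially good reduction at $\lambda$, so for every finite $M$ and $w\mid\lambda$ the formal-group part of $A(M_w)$ is pro-$\ell$ ($\ell\ne p$ the residue characteristic) and the complementary quotient is finite; hence $A(M_w)\otimes\frak{F}/\ooo=0$, the relaxed condition at $\lambda$ is the zero condition, and classes of $\textup{Sel}_\varpi^{\,\prime}(A/F_\infty)$ are locally trivial — a fortiori unramified — at all $\lambda\nmid p$, which also puts them in $H^{1}(G_{F,\Sigma(\FF_{-})}/F,\TT^{*})$. At a prime $v\mid p$ with $v\ne\wp$ (this includes every $\wp^{c}\in\Sigma_p^{c}$; note $v\nmid\varpi$, so a condition is genuinely imposed): here the essential input is that the Hecke character $\psi=\psi_{\wp}^{\sigma}$, being the $p$-adic avatar of the Gr\"ossencharacter $\psi_\varepsilon$ of $A$ whose infinity type is the single embedding $\varepsilon\in\Sigma$ inducing $\wp\in\Sigma_p$, has Hodge--Tate weight $0$ at $v$ and (by good reduction) is unramified there — equivalently $A[\varpi^\infty]$ is \'etale over $\oo_{F_v}$, so the $\varpi$-power torsion in the formal group of $A$ over $\oo_{F_v}$ vanishes. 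Then for $w\mid v$ in $F_\infty$, $A(F_{\infty,w})\otimes_{\oo_\wp}\frak{F}/\ooo$ reduces to the contribution of $\widetilde{A}[\varpi^\infty](k_{F_{\infty,w}})$, which is finite: the Frobenius eigenvalue $\psi_\wp(\textup{Frob}_v)$ is not a root of unity by Weil, it is $\not\equiv 1\bmod\varpi$ by the non-anomaly hypothesis (\ref{eqn:hna}), and $k_{F_{\infty,w}}/k_v$ has $p$-power residue degree; so that tensor vanishes and the relaxed condition at $v$ is again the zero condition. Assembling: a class of $\textup{Sel}_\varpi^{\,\prime}(A/F_\infty)$ is locally trivial at every prime other than $\wp$, hence lies in $H^{1}_{\FF_{-}^{*}}(F,\TT^{*})$. (Equivalently, via Kummer theory and Remark~\ref{rem:bigselmerexplicit}, such a class corresponds to a $\chi$-homomorphism on $\Gal(\overline{L_\infty}/L_\infty)$ unramified outside $\Sigma_p$, i.e., to an element of $\frak{X}_\infty$.)

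The main obstacle I anticipate is the local analysis at the primes above $p$ other than $\wp$ — isolating from the theory of complex multiplication and the $p$-ordinarity hypothesis the precise statement that $\psi$ is unramified there, and then running the computation of $A(F_{\infty,w})\otimes_{\oo_\wp}\frak{F}/\ooo$ up the $F_\infty$-tower using the non-anomaly condition — and, relatedly, the bookkeeping that the Shapiro/twisting identification respects all the local conditions simultaneously. Part (i), the places away from $p$, and the reduction to these local statements are comparatively routine.
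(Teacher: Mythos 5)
Your proof is correct and reaches the same conclusions, but it unfolds the argument into an explicit place-by-place verification where the paper instead defers to a citation; the comparison is worth making precise.

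For part (i), both you and the paper reduce to showing $A[\varpi^\infty]^{G_{F_\infty}}=0$ and then invoke inflation--restriction. The paper gets this vanishing from the non-anomaly hypothesis $(\ref{eqn:hna})$ (a pro-$p$-group fixed-point argument, following Rubin's Lemma~2.2), whereas you get it directly from the fact that $\omega_\psi$ has order prime to $p$ and hence restricts nontrivially to $G_{F_\infty}$, so that $\omega_\psi(g)-1$ is a unit in $\ooo$ for suitable $g\in G_{F_\infty}$. Both are valid; yours has the mild virtue of not invoking $(\ref{eqn:hna})$ for this step.

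For part (ii), the paper's proof is a pointer to the second half of Coates's infinite-descent argument together with the twist-and-Shapiro identification $\textup{Hom}(\frak{X}_\infty,A[\varpi^\infty])\cong H^1_{\FF_-^*}(F,\TT^*)$. You instead unwind the local conditions of $\FF_-^*$ (correctly: zero at ramified $\lambda\nmid p$ and at $\Sigma_p^c$, unrestricted at $\Sigma_p$, unramified elsewhere) and verify them one place at a time. The analysis at $\lambda\nmid p$ is standard and correct (the $p$-part of $A(M_w)$ is finite). The crucial step at $v\mid p$, $v\neq\wp$ is also correct: the infinity type of $\psi_\varepsilon$ is the single embedding $\varepsilon$, so the $p$-adic avatar $\psi=\psi_\wp^\sigma$ has Hodge--Tate weight $0$ at every $v\mid p$ other than $\wp$; being crystalline (good reduction) with weight $0$, it is unramified there, so $A[\varpi^\infty]$ is \'etale over $\oo_{F_v}$, the formal-group part of $A(M_w)$ is $\varpi$-divisible, and the Kummer image in the $\varpi$-part vanishes. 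Two minor over-engineerings: you prove local triviality at \emph{all} $v\mid p$, $v\neq\wp$, when the Selmer structure $\FF_-^*$ only imposes a condition at $\Sigma_p^c$ (harmless, and in fact your argument happens to cover the extra places); and your appeal to the finiteness of $\widetilde{A}[\varpi^\infty](k_{F_{\infty,w}})$ (via Weil bounds and non-anomaly) is unnecessary --- the finite \'etale quotient $\widetilde{A}(k_{M_w})$ already dies after tensoring with $\frak{F}/\ooo$ at each finite level, so the direct limit dies too. What your approach buys is an explicit and self-contained verification that does not rely on the Coates reference, at the cost of committing to the Serre--Tate/CM-type facts about Hodge--Tate weights of the $p$-adic avatar; what the paper's approach buys is brevity. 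The two arguments are mathematically compatible.
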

\begin{proof}
Both assertions are standard. Our non-anomaly assumption (\ref{eqn:hna}) shows that
 $$H^1(F_\infty/F,A[\varpi^\infty])=0$$
 (c.f. the proof of \cite[Lemma 2.2]{rubin87sha}) and the inflation/restriction sequence shows that the restriction map
$$H^1(F,A[\varpi^\infty])\lra H^1(F_\infty,A[\varpi^\infty])$$
is injective. This proves (i). The assertion (ii) may be proved following the second half of the proof  of \cite[Theorem 12]{coatesinfinitedescent} \emph{verbatim},  and using the identification
$$\textup{Hom}(\frak{X}_\infty,A[\varpi^\infty])\cong H^1_{\FF^*_{-}}(F,\TT^*),$$
which is a consequence of the twisting formalism in~\cite[\S6]{r00}.


\end{proof}
Let $g_\chi \in \textup{char}\left(H^1_{\FF^*_{-}}(F,\TT^*)^\vee\right)$ be any generator. Assuming Conjecture~\ref{conj:yager}, we have $g_\chi=u\cdot \al_\chi^\Sigma$ for a unit $u \in \LL_\mathcal{W}^\times$.

\begin{thm}\label{thm:mainabvar} Assume the truth of Conjecture~\ref{conj:yager} and the hypotheses of Theorem~\ref{thm:mainconjforTchi}.
\begin{itemize}
\item[(i)]  For $\alpha_\varpi=\textup{ord}_\varpi(\psi(g_\chi))$, we have $| H^1_{\FF_-^*}(F,T^*)| = p^{\alpha_\wp}$.
\item[(ii)] Suppose $L(\psi_\varepsilon,0)\neq0$. Then $A(F)$ is finite and $\Sha_{A/F}[\varpi^\infty]$ is finite.
\end{itemize}
\end{thm}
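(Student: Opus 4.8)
The plan is to deduce (i) from Theorem~\ref{thm:mainconjforTchi}(iii) by a control argument that specializes the Iwasawa module $H^1_{\FF_-^*}(F,\TT^*)$ at the Hecke character $\psi$, and then to combine (i) with the comparison of Selmer groups in Proposition~\ref{prop:compareellipticselmertox} and the interpolation property of Katz's $p$-adic $L$-function to obtain (ii).

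For (i): since $\chi=\omega_\psi$ we have $T=T_\chi\otimes\langle\psi\rangle^{-1}$, so $\langle\psi\rangle^{-1}$ is a continuous character of $\Gamma$ and induces an $\ooo$-algebra homomorphism $\psi\colon\LL\to\ooo$ under which $T_\chi\otimes\LL$ specializes to $T$. Recall from the proof of Theorem~\ref{thm:mainconjforTchi}(iii) that $\widetilde{R\Gamma}_{f,\textup{Iw}}(F_\infty/F,T_\chi)$ is represented by $\textup{Cone}(M\stackrel{u}{\lra}M)[-2]$ with $M$ a free $\LL$-module of finite type and $u$ injective; since $\textup{coker}(u)\cong\tilde{H}^2_{f,\textup{Iw}}(F_\infty/F,T_\chi)\cong H^1_{\FF_-^*}(F,\TT^*)^\vee$ has characteristic ideal $(g_\chi)$, the element $\det(u)$ generates $(g_\chi)$ in $\LL$. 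The vanishings $\tilde{H}^0_f(F,A_\chi)=\tilde{H}^3_f(F,T_\chi)=0$ established there persist after the wild twist by $\langle\psi\rangle^{-1}$ (using the non-anomaly hypothesis (\ref{eqn:hna}) together with (\ref{eqn:assnotrivxhi}) and (\ref{eqn:chiisnotteich1})), so Nekov\'a\v{r}'s control theorem \cite[Proposition 8.10.1]{nekovar06} yields $\widetilde{R\Gamma}_{f,\textup{Iw}}(F_\infty/F,T_\chi)\stackrel{\mathbf{L}}{\otimes}_{\LL,\psi}\ooo\stackrel{\sim}{\lra}\widetilde{R\Gamma}_f(F,T)$; as $M$ is free this reads $\widetilde{R\Gamma}_f(F,T)\cong\textup{Cone}(M_\psi\stackrel{u_\psi}{\lra}M_\psi)[-2]$ with $M_\psi=M\otimes_{\LL,\psi}\ooo$ a free $\ooo$-module and $\psi(\det u)$ generating $(\psi(g_\chi))$. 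By Nekov\'a\v{r}'s global duality $H^1_{\FF_-^*}(F,T^*)^\vee\cong\tilde{H}^2_f(F,T)=\textup{coker}(u_\psi)$, whence $\#H^1_{\FF_-^*}(F,T^*)=\#\textup{coker}(u_\psi)=\#(\ooo/\psi(g_\chi)\ooo)=p^{\alpha_\varpi}$, which is (i).

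For (ii): by Conjecture~\ref{conj:yager} we may take $g_\chi=u\cdot\mathcal{L}_\chi^\Sigma$ with $u\in\LL_{\mathcal{W}}^\times$, so $\psi(g_\chi)$ differs from $\psi(\mathcal{L}_\chi^\Sigma)$ by a $p$-adic unit; and by the interpolation property of Katz's $p$-adic $L$-function recalled in \S\ref{subsec:conntoKatz}, $\psi(\mathcal{L}_\chi^\Sigma)$ equals $L(\psi_\varepsilon,0)$ up to a nonzero complex (Shimura) period and up to the local Euler-type factors occurring in Katz's formula --- these being nonzero under the running hypotheses, in particular because $\chi(\wp)\neq1$ by (\ref{eqn:assnotrivxhi}) and because $\chi=\omega_\psi$ is ramified only at $\wp$. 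Hence $L(\psi_\varepsilon,0)\neq0$ forces $\psi(g_\chi)\neq0$, i.e.\ $\alpha_\varpi<\infty$, so by (i) the group $H^1_{\FF_-^*}(F,T^*)=H^1_{\FF_-^*}(F,A[\varpi^\infty])$ is finite. Now Proposition~\ref{prop:compareellipticselmertox} gives
$$\textup{Sel}_\varpi^{\,\prime}(A/F)\hookrightarrow\textup{Sel}_\varpi^{\,\prime}(A/F_\infty)^\Gamma\hookrightarrow H^1_{\FF_-^*}(F,\TT^*)[\frak{p}_\psi],$$
where $\frak{p}_\psi=\ker(\psi)$ and the $\Gamma$-invariants are computed for the $\langle\psi\rangle$-twisted action that $A[\varpi^\infty]$ carries over $F_\infty$; the control isomorphism above identifies $H^1_{\FF_-^*}(F,\TT^*)[\frak{p}_\psi]$ with $H^1_{\FF_-^*}(F,T^*)$, so $\textup{Sel}_\varpi^{\,\prime}(A/F)$, hence also $\textup{Sel}_\varpi(A/F)\subset\textup{Sel}_\varpi^{\,\prime}(A/F)$, is finite. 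Finally the Kummer sequence
$$0\lra A(F)\otimes\frak{F}/\ooo\lra\textup{Sel}_\varpi(A/F)\lra\Sha_{A/F}[\varpi^\infty]\lra 0$$
shows that $A(F)\otimes\frak{F}/\ooo$ and $\Sha_{A/F}[\varpi^\infty]$ are finite; since $A(F)$ is finitely generated by the Mordell--Weil theorem, finiteness of $A(F)\otimes\frak{F}/\ooo$ forces $A(F)$ to be finite.

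The step I expect to be the main obstacle is the comparison underlying the displayed chain of inclusions: one must match Nekov\'a\v{r}'s Greenberg-style local conditions defining $H^1_{\FF_-^*}$ against the local conditions of the $\varpi$-relaxed Selmer group at the primes of $\Sigma_p^c$ --- where, in the $F_\infty$-limit, both collapse to $0$ (cf.\ Remark~\ref{rem:bigselmerexplicit}) --- and one must check that base change along the non-augmentation ideal $\frak{p}_\psi$ introduces no spurious cohomology, which is where the hypotheses (\ref{eqn:hna}), (\ref{eqn:assnotrivxhi}) and (\ref{eqn:chiisnotteich1}) are used decisively. A secondary but essential point is the precise identification of $\psi(\mathcal{L}_\chi^\Sigma)$ with the complex $L$-value $L(\psi_\varepsilon,0)$, together with the non-vanishing of the attendant interpolation factors; this is the only place where the hypothesis $L(\psi_\varepsilon,0)\neq0$ is invoked.
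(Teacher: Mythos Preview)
Your proof is correct and follows essentially the same route as the paper. For (i) you carry out explicitly what the paper compresses into the phrase ``a variant of the proof of the identity (\ref{eqn:rhsmodA})'': namely, you replace the augmentation ideal $\mathcal{A}$ by the kernel $\frak{p}_\psi$ of the specialization $\LL\to\ooo$ at $\psi$ and run the same Nekov\'a\v{r} descent argument (representing the Selmer complex as $\textup{Cone}(M\stackrel{u}{\to}M)[-2]$ and applying the control theorem) to obtain $|H^1_{\FF_-^*}(F,T^*)|=[\ooo:\psi(g_\chi)]$. For (ii) you again mirror the paper, invoking (i), Proposition~\ref{prop:compareellipticselmertox}, and the interpolation formula for $\mathcal{L}_\chi^\Sigma$; your added details (the explicit Kummer exact sequence and the Mordell--Weil step) are standard and simply make explicit what the paper leaves as ``follows immediately.''
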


\begin{proof}
Let $A_\psi=\ker\left\{\LL\stackrel{\gamma\mapsto\psi(\gamma)}{\lra} \ooo\right\}$. A variant of the proof of the identity (\ref{eqn:rhsmodA}) shows that
$$[\ooo:\textup{char}\left(H^1_{\FF^*_{-}}(F,(T_\chi\otimes\LL)^*)^\vee\right)\otimes_{\LL}\LL/\mathcal{A_\psi}]=| H^1_{\FF_-^*}(F,T^*)|$$
and that
\begin{align*}
[\ooo:\textup{char}\left(H^1_{\FF^*_{-}}(F,(T_\chi\otimes\LL)^*)^\vee\right)\otimes_{\LL}\LL/\mathcal{A_\psi}]&=[\ooo:\psi\left(\textup{char}(H^1_{ \FF^*_{-}}(F,(T_\chi\otimes\LL)^*)^\vee)\right)]\\
&=[\ooo:\psi(g_\chi)]
\end{align*}
This proves (i).

The assertion (ii) follows immediately from (i), Proposition~\ref{prop:compareellipticselmertox} and  the interpolation property of the $p$-adic $L$-function $\al_\chi^\Sigma$ (which may be found in \cite[Theorem II]{ht93}).
\end{proof}

{\scriptsize
\bibliographystyle{halpha}
\bibliography{references}
}
\end{document}